\def\Cal{\mathcal}
\def\ot{\leftarrow} 
\def\<<{\langle } 
\def\>>{\rangle }
\numberwithin{equation}{section} 
\newtheorem{theorem}{Theorem}[section] 
\newtheorem{proposition}[theorem]{Proposition} 
\newtheorem{corollary}[theorem]{Corollary} 
\newtheorem{definition}[theorem]{Definition} 
\newtheorem{remark}[theorem]{Remark} 
\newtheorem{lemma}[theorem]{Lemma} 
\newtheorem{example}[theorem]{Example}
\newtheorem{notation}[theorem]{Notation}
\def\<{\langle} 
\def\>{\rangle}
\begin{document} 
 
\title{
Brown-Zagier relation for associators} 
 
\author{Tomohide Terasoma}

\maketitle 
\makeatletter





\section{Introduction}
We have big heritage of equalities on hypergeometric funcitons, which can be
used for showing many equalities for multiple zeta values.
This method can be also applicable for showing relations
between coefficients of associators using
the theory of $\Phi$-cohomology. 
A $\Phi$-cohomology is 
equipped with two realizations
$B,dR$ and a comparison map described by the given
associator $\Phi$.
Brown \cite{B} used certain relation between multiple zeta values
to show the injecctivity of the homomorphism from
Motivic Galois group to Grothendieck-Teichmuller group.
This relation was proved by Zagier \cite{Z}, which we call Brown-Zagier
relation.
After his work, Li \cite{L} gave another proof of Brown-Zagier relation
using several functional equations of hypergeometric series.

In this paper, we show that Brown-Zagier relation holds also
for the coefficietns of any associators.
In the paper \cite{L}, he proved Brown-Zagier relation using
Dixon's theorem which is equivalent
to Selberg integral formula. The Selberg integral formula
arises from symmetric product construction, which does
no exist in the category of moduli space. Even in this case,
we can construct isomorphism between $\Phi$-local systems
using descent theory.
The main theorem is steted as follows.
\begin{theorem}
\label{main theorem}
We use the notation for coefficients 
$\zeta_{\Phi}(n_1, \dots, n_m)$ of an associator $\Phi$.
Then we have
$$
\zeta_{\phi}(2^a,3 ,2^b)=2\sum_{r=1}^{a+b+1}(-1)^{r}c_{a,b}^r
\zeta_{\Phi}(2r+1))\zeta_{\Phi}(2^{a+b-r+1}),
$$
where
$$
c_{a,b}^r=
\left(
\begin{matrix}
2r \\ 2a+2
\end{matrix}
\right)-(1-\frac{1}{2^{2r}})\left(
\begin{matrix}
2r \\2b+1
\end{matrix}
\right)
$$
\end{theorem}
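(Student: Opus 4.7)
The plan is to port Li's analytic proof of the Brown--Zagier identity \cite{L} to the $\Phi$-cohomological framework of the present paper, so that it produces the identity for an arbitrary associator $\Phi$ rather than only for the Drinfeld associator and its multiple zeta values. In Li's approach the key input is Dixon's theorem, equivalent to an instance of the Selberg integral, together with a chain of functional equations for hypergeometric series. In the present setting these classical functional equations are to be replaced by isomorphisms of $\Phi$-local systems on moduli spaces $M_{0,n}$; evaluating such an isomorphism under the Betti--de Rham comparison governed by $\Phi$ will produce the corresponding identity among the associator coefficients $\zeta_{\Phi}$.

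As a first step I would identify both sides of the asserted identity with Betti--de Rham periods attached to explicit $\Phi$-cohomology groups. The iterated-integral presentation of associator coefficients realises $\zeta_{\Phi}(2^{a},3,2^{b})$ as a distinguished period of a $\Phi$-cohomology on an appropriate $M_{0,n}$, while each summand $\zeta_{\Phi}(2r+1)\zeta_{\Phi}(2^{a+b-r+1})$ on the right-hand side appears as a period of a tensor product of two lower-dimensional $\Phi$-cohomologies, by the multiplicativity of the comparison map. The binomial factors $\binom{2r}{2a+2}$ and $(1-2^{-2r})\binom{2r}{2b+1}$ should then emerge as structure constants in the decomposition of the distinguished class in terms of these tensor-product classes, mirroring the binomial expansions of the hypergeometric kernels in \cite{L}.

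The main obstacle is the step in which Li invokes Dixon's theorem: this rests on the existence of a symmetric product $\operatorname{Sym}^{n}\mathbb{A}^{1}$, but the analogous quotient of $M_{0,n}$ by the symmetric group acting on the marked points is not itself an object of the moduli-theoretic category in the form required here. The plan, as hinted in the introduction, is to work $S_{n}$-equivariantly with the relevant $\Phi$-local system on $M_{0,n}$ and to extract its symmetric part by descent: an isomorphism of equivariant $\Phi$-local systems upstairs, equipped with descent data, is enough to produce the required isomorphism of $\Phi$-Hodge structures without constructing the quotient space itself. Verifying that the descent data are compatible with both the Betti and de Rham realisations, and with the associator-induced comparison, is where the technical heart of the argument lies.

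With this descent-theoretic replacement for Dixon's theorem in hand, the proof concludes by evaluating the resulting isomorphism of $\Phi$-local systems under the Betti--de Rham comparison. Since the combinatorial manipulations with binomial coefficients are already performed in \cite{L} at the level of hypergeometric identities, they transfer to the associator setting as soon as each analytic identity has been lifted to an isomorphism of $\Phi$-local systems; the final step is then bookkeeping with binomial coefficients to recover the stated formula for $c_{a,b}^{r}$.
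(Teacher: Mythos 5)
Your plan restates the strategy the paper itself announces in the introduction (port Li's argument, replace each hypergeometric functional equation by an identity of $\Phi$-local systems on the moduli spaces, and circumvent the symmetric-product origin of Dixon's theorem by descent), so the route is not wrong; the problem is that essentially everything that constitutes the proof is deferred. The first missing pillar is the associator analogue of Zagier's generating series: before any Dixon-type input can be used one needs Theorem \ref{first identity by Zagier}, i.e. $\sum_{n\ge 0,m>0}c_{\Phi,(01)^n0(01)^m}b^{2n+1}a^{2m}=\frac{\sin(\pi b)}{\pi}\frac{d}{dz}\big|_{z=0}F_{\Phi}(a,-a,z;1+b,1-b;1)$, and in the paper this costs the junction construction $E(M,N)$ for hypergeometric $\mathcal A_4^{\Phi}$-modules (Proposition \ref{proposition of convolution formula}), the regularized-cycle limit of Proposition \ref{limit tends to delta function}, and the ${}_4F_3$-type integral formula proved on $\mathcal M_7$ by a $b_1\to a_1$ degeneration (Theorem \ref{main identity for 4F3}). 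Your sketch compresses all of this into ``multiplicativity of the comparison map'' and asserts that the binomial coefficients $c^r_{a,b}$ emerge as structure constants of a decomposition of a distinguished class; in fact they appear only at the very last step, from expanding $\Psi(1+z)=\sum_{n\ge 2}(-1)^n\zeta_{\Phi}(n)z^{n-1}$ after Proposition \ref{relation to psi functions}, so no such decomposition is ever constructed or needed.

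The second gap is the Dixon/Selberg step itself. The descent the paper actually uses is the constancy criterion for $\mathcal A_4^{\Phi}$-modules (if $e_0,e_1$ act by zero, the module is pulled back from a point), applied to identify the rank-one $\iota$-fixed direct image $\mathcal D(a,b,c)$ in the determinant formula; it is not an $S_n$-equivariant descent to a symmetric quotient, and extracting a symmetric part by itself evaluates nothing --- the closed form of $S^{+}_{\Phi}$ comes from computing one triple $\Phi$-integral along two different fibrations of $\mathcal M_6$ (over $\mathcal M_5$ and over $\mathcal M_4$). Moreover Li's computation requires the odd Selberg integral $S^{-}_{\Phi}$ and the Aomoto-type variants $Sel_{\Phi}(f(x,y))$ as well, and the paper evaluates $S^{-}_{\Phi}$ by an entirely different mechanism (a rank-three differential system for ${}_3F_2$-type $\Phi$-periods, a Wronskian pinned down by monodromy relations among the cycles $\gamma_1,\gamma_2,\gamma_3,\gamma_5,\tau$, and a vanishing-cycle limit at $x=1$), which no descent argument of the kind you describe produces. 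Finally, the three functional equations of Proposition \ref{several relations for hypergeometric constatns} each need their own cohomological proof (a de Rham relation, a Betti cycle relation with a change of coordinates, and a coordinate change in the integral expression), together with Lemma \ref{vanishing differential} to kill one derivative term. Until these pieces are supplied, your proposal is an accurate table of contents for the paper's argument rather than a proof of the stated identity.
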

Since the generating series of motivic multiple zeta values satisfies the
associator relation, we have the following corollary.
\begin{corollary}
The same relation holds in the coordinate ring of mixed Tate motives.
\end{corollary}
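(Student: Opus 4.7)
The plan is to deduce the corollary as a direct specialization of Theorem~\ref{main theorem} to the motivic associator $\Phi^{\mathfrak{m}}$ arising from the motivic fundamental groupoid of $\mathbb{P}^{1}\setminus\{0,1,\infty\}$ with tangential base points at $0$ and $1$. The coefficients of $\Phi^{\mathfrak{m}}$, in the standard noncommutative-series presentation, are the motivic multiple zeta values $\zeta^{\mathfrak{m}}(n_{1},\dots,n_{m})$ in the sense of Goncharov and Brown, and these generate the coordinate ring $\mathcal{O}(G^{\mathfrak{m}})$ of the motivic Galois group of the tannakian category $\mathrm{MT}(\mathbb{Z})$ of mixed Tate motives over $\mathbb{Z}$.

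First I would invoke the structural input that $\Phi^{\mathfrak{m}}$ satisfies the pentagon and hexagon relations defining an associator; this follows from Deligne--Goncharov's description of the motivic fundamental groupoid and from the comparison between Betti and de Rham realizations. With this verified, $\Phi^{\mathfrak{m}}$ lies squarely inside the framework of Theorem~\ref{main theorem}, and the $\Phi$-cohomology machinery developed earlier in the paper applies tautologically to it.

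Substituting $\Phi = \Phi^{\mathfrak{m}}$ in the main theorem then yields
\begin{equation*}
\zeta^{\mathfrak{m}}(2^{a},3,2^{b}) \;=\; 2\sum_{r=1}^{a+b+1} (-1)^{r}\, c_{a,b}^{r}\, \zeta^{\mathfrak{m}}(2r+1)\,\zeta^{\mathfrak{m}}(2^{a+b-r+1})
\end{equation*}
with $c_{a,b}^{r}$ exactly as in the statement. Every term lives in $\mathcal{O}(G^{\mathfrak{m}})$, which is the coordinate ring of mixed Tate motives over $\mathbb{Z}$, so this is the asserted relation.

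The only genuine step requiring attention is bookkeeping: one must confirm that the normalization convention $\zeta_{\Phi}$ used in the main theorem matches the standard motivic convention for iterated integrals along the straight path from $0$ to $1$ (signs coming from $\mathrm{d}t/(1-t)$ versus $-\mathrm{d}t/(1-t)$, ordering of indices, etc.). I do not expect any substantive obstacle here, since the very purpose of the associator formalism is to make such specializations formal once the pentagon/hexagon axioms are in hand.
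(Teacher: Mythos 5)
Your argument is correct and is essentially the paper's own: the paper deduces the corollary in one line by observing that the generating series of motivic multiple zeta values satisfies the associator relations, so Theorem \ref{main theorem} applies verbatim with $\Phi$ the motivic associator and the coefficients $\zeta_{\Phi}$ become the motivic multiple zeta values in the coordinate ring. Your additional remarks on Deligne--Goncharov and normalization conventions are just a more explicit spelling-out of that same specialization.
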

The above corollary gives an another proof of a result of Brown.
\begin{notation}
The product of Gamma function $\Gamma(a_1)\Gamma(a_2)\cdots \Gamma(a_n)$
is denoted by $\Gamma(a_2,a_2,\dots, a_n)$ for short.
\end{notation}
\section{Differential equations and generating functions}
In this section, we recall outline of classical theory of
hypergoemtric functions and Gauss-Manin connections.

\subsection{Differential equation and iterated integral}
Let $N^*$ be a $\bold C\<\<e_0,e_1\>\>$ left module.
The action of $e_0$ and $e_1$ on $N^*$ is denoted by $P_0$ and $P_1$,
Let $\Cal O_U$ be the ring of analytic functions on an open set $U$ in 
$\bold P^1-\{0,1,\infty\}$.
We define a map 
$P:N^*\otimes O_{an}\to N^*\otimes \Omega^1_{an}$
$$
N^* \to N^*\otimes \<\frac{dx}{x},\frac{dx}{x-1}\>:
v\mapsto P(x)v 
$$
where $P(x)=P_0\frac{dx}{x}+P_1\frac{dx}{x-1}$.
There exists a unique local solution $\Phi_u(x)$
of the differential equation $d\Phi(x)=P(x)\Phi(x)$
for $End(N^*,N^*)$-valued analytic functions
such that $\Phi_u(u)=id_V$.
It is denoted by $\displaystyle\exp(\int_u^xP)$.
For any solution of the differential equation $dV=PV$ for 
$End(N^*,N^*)$-valued
functions, we have
$$
V(x)=(I+\int_{u}^xP+\int_{u}^xPP+\cdots)V(u)=\exp(\int_{u}^xP)V(u)
$$
for $t\in \bold R, 0<t_0<\epsilon$.
Fora path $\gamma$ from $u$ to $u'$,
$\exp(\int_u^{u'}P)$ depends only on the homotopy class of $\gamma$,
which is denoted by
$\rho(\gamma)$. Then $\rho$ defines a left $\pi_1(\Cal M_4)$-module on $N^*$.

\subsection{Differential equation of Gauss hypergeometric functions}
\label{subsec: Differential equation GM connection}
In this section, we recall the differential equations 
satisfied by hypergoemtric functions.
\subsubsection{}
We define hypergeometric function by
\begin{align*}
F(a,b;c;x)=&\sum_{n=0}^{\infty}\frac{(a)_n(b)_n}{n!(c)_n}x^n, 
\end{align*}
where $(a)_n=a(a+1)\cdots (a+n-1)$. The hypergoemetric function has
the following integral expression.
\begin{align*}
B(a,c-a)F(a,b;c;x)
=\int_0^1t^{a-1}(1-t)^{c-a-1}(1-xt)^{-b}dt.
\end{align*}
\subsubsection{}
The differential is denoted by $D=\frac{\partial}{\partial x}$.
We define a matrix $V_0=(v_{ij})_{1\leq i,j \leq 2}$, where
\begin{align*}
 v_{11}^{(0)}=&\frac{\Gamma(a,c-a+1)}{\Gamma(c+1)}F(a,b;c+1;x),\quad
\\
v_{12}^{(0)}=&x^{- c} 
\frac{\Gamma(b-c,1-b)}{\Gamma(1-c)}
F(b - c, a - c; 1 - c;x),\quad 
\\
v_{22}^{(0)}=&\frac{1}{a}xD(v_{12}),\quad
v_{21}^{(0)}=\frac{1}{a}xD(v_{11})
\end{align*}
Let $P$ be a matrix defined by
\begin{equation}
\label{generic differential operator}
P=\frac{dx}{x}
P_0
+
\frac{dx}{x-1}
P_1
\end{equation}
where
\begin{equation*}
P_0=
\left(
\begin{matrix}
0 & a \\
0 & -c
\end{matrix}\right),
P_1=
\left(\begin{matrix}
0 & 0 \\
-b & c-a-b
\end{matrix}
\right).
\end{equation*}
Then the matrix $V_0$ satisfies
the differential equation
\begin{equation}
\label{hyper geometric Pfaff equation }
dV=PV.
\end{equation}
Let $V_1$ be matrix defined by
\begin{align*}
 v_{11}^{(1)}=&\frac{\Gamma(a,b-c)}{\Gamma(a+b-c)}F(a,b;a+b-c;1-x),\quad
\\
v_{12}^{(1)}=&
(1-x)^{c-a-b+1}
\frac{\Gamma(c+1-a,1-b)}{\Gamma(c+2-a-b)}
F(c+1-a,c+1-b;2+c-a-b;1-x),\quad \\
v_{22}^{(1)}=&\frac{1}{a}xD(v_{12})\quad
v_{21}^{(1)}=\frac{1}{a}xD(v_{11}), 
\\
\end{align*}
Then $V_1$ also satisfies the differential equation 
(\ref{hyper geometric Pfaff equation }).

\subsubsection{Connections and differential equations for coefficients}
Let $N^*$ be the vector space generated by $\omega_1^*, \omega_2^*$,
$N$ be its dual and $\omega_1, \omega_2$ be the dual basis of
$\omega_1, \omega_2$.
We define a linear maps $\nabla:N\to N\otimes 
\<\frac{dx}{x},\frac{dx}{x-1}\>$ and 
$\nabla^*:N^*\to N^*\otimes \<\frac{dx}{x},\frac{dx}{x-1}\>$ by
\begin{align}
\label{gauss manin connection as matrix}
&\nabla
\left(
\begin{matrix}
\omega_1 \\ \omega_2
\end{matrix}
\right)=
P\left(
\begin{matrix}
\omega_1 \\ \omega_2
\end{matrix}
\right) 
\\
\nonumber
&
\nabla^*
\left(
\begin{matrix}
\omega_1^* & \omega_2^*
\end{matrix}
\right)=
-\left(
\begin{matrix}
\omega_1^* & \omega_2^*
\end{matrix}
\right)P
\end{align}
The map $\nabla^*$ can be extended to a connection on the
$N^*\otimes \bold C[x,\frac{1}{x},\frac{1}{x-1}]$, 
which is also denoted by $\nabla^*$.
We use the following identification
$$
f_1(x)\omega_1^*+f_2(x)\omega_2^*=\left(
\begin{matrix}
f_1(x) \\ f_2(x) 
\end{matrix}
\right)
$$
Then we have
$$
\nabla\left(
\begin{matrix}
f_1(x)dx \\ f_2(x)dx 
\end{matrix}
\right)
=d
\left(
\begin{matrix}
f_1(x) \\ f_2(x) 
\end{matrix}
\right)
-P
\left(
\begin{matrix}
f_1(x) \\ f_2(x) 
\end{matrix}
\right)
$$
on $N^*\otimes \bold C[x,\frac{1}{x},\frac{1}{x-1}]$.
Let $\gamma$ be an $N^*$-valued analytic function.
Using the pairing $\<\ ,\ \>$, $\gamma$
is written as
\begin{align*}
&\gamma=
\<\gamma,\omega_1\>\omega_1^*+
\<\gamma,\omega_2\>\omega_2^*=
\left(\begin{matrix}
\<\gamma,\omega_1\> \\
\<\gamma,\omega_2\> \\
\end{matrix}
\right),
\end{align*}
Therefore $\gamma$ is a horizontal section for $\nabla$ if and only if
$$
d\left(\begin{matrix}
\<\gamma,\omega_1\> \\
\<\gamma,\omega_2\> \\
\end{matrix}
\right)=P
\left(\begin{matrix}
\<\gamma,\omega_1\> \\
\<\gamma,\omega_2\> \\
\end{matrix}
\right)
$$


\subsubsection{Hypergeometric function and its integral expression}
By the integral expression of hypergoemetric functions, 
the matrix elements of $V_0$ is written as 
\begin{align}
\label{integral expression matrix}
&v_{11}^{(0)}=\int_{\gamma_1}\omega_1,
v_{12}^{(0)}=\int_{\gamma_2}\omega_1,
v_{22}^{(0)}=\int_{\gamma_1}\omega_2,
v_{22}^{(0)}=\int_{\gamma_2}\omega_2,
\\
\nonumber
&v_{11}^{(1)}=\int_{\gamma_1^\#}\omega_1,
v_{12}^{(1)}=\int_{\gamma_2^\#}\omega_1,
v_{22}^{(1)}=\int_{\gamma_1^\#}\omega_2,
v_{22}^{(1)}=\int_{\gamma_2^\#}\omega_2,
\end{align}
where $\omega_1,\omega_2$ are 
the relative twisted de Rham cohomology classes 
defined by
\begin{align}
\label{hypergeometric base for de Rham}
&\omega_1=\big[\frac{dt}{t}\big], \quad \omega_2=\big[\frac{bxdt}{a(1-xt)}
\big],
\end{align}
and $\gamma_1,\gamma_2$ be twisted cycle defined by
\begin{align}
\label{hypergeometric base for de Rham}
&\gamma_1
=\big[t^a(1-t)^{c-a}(1-xt)^{-b}\big]_{[0,1]}, 
\\
\nonumber
&\gamma_2=\big[t^a(t-1)^{c-a}(xt-1)^{-b}\big]_{[\frac{1}{x},\infty]}
\\
\nonumber
&\gamma_1^\#=\big[(-t)^a(1-t)^{c-a}(1-xt)^{-b}\big]_{[-\infty,0]}
\\
\nonumber
&\gamma_2^\#=\big[t^a(t-1)^{c-a}(1-xt)^{-b}\big]_{[1,\frac{1}{x}]}
\end{align}
We have the following equality of cycles.
\begin{align}
\label{cycles on M_4}
\bold s(c)\gamma_1^\#=&
\bold s(c-a)\gamma_1
+\bold s(b)
\gamma_2, \\
\nonumber
\bold s(c-a-b)\gamma_1=&
\bold s(c-b)
\gamma_1^\#+\bold s(b)\gamma_2^\#.
\end{align}
where 
$\displaystyle\bold s(z)=\frac{1}
{\Gamma(z)\Gamma(1-z)}=\frac{\sin(\pi z)}{\pi}$.
Let $N$ be the vector space generated by $\omega_1,\omega_2$.
Then the Gauss-Manin connection is given by the map
(\ref{gauss manin connection as matrix}).
Under the comparison map, $\gamma_1, \gamma_2$
defines a horizontal $N^*$-valued analytic map on $(0,1)$. By the expression 
\ref{integral expression matrix},
we have
\begin{align*}
\lim_{\epsilon \to +0}\gamma_1(\epsilon)=&
B(a,c-a+1)\left(
\begin{matrix}
1 \\ 0\end{matrix}
\right), \\
\lim_{\epsilon \to +0}(\epsilon^c\gamma_2)(\epsilon)=&
%
B(b-c,1-b)\left(
\begin{matrix}
1 \\ -\frac{c}{a}\end{matrix}
\right).
\end{align*}

\subsection{Gauss-Manin connection and horizontal section on the daul}
\label{sec:Gauss-Manin connection and horizontal section on the daul}

\subsubsection{Dual differential equations}
\label{subsec: Dual differential equation GM connection}

We construct solutions of 
the dual differential equation around $1$.
We define a matrix $W_1=(w_{ij})_{1\leq i,j \leq 2}$
by
\begin{align*}
&w_{11}=\frac{\Gamma(-a,c-b+1)}{\Gamma(c-a-b+1)}
F(-a,-b;c-a-b+1;1-x),\\
&w_{21}=\frac{\Gamma(a-c,b+1))}{\Gamma(a+b-c+1)}(1-x)^{-c+a+b}F(a-c,b-c;1-c+a+b;1-x)
\\
&w_{12}=-\frac{1}{b}(1-x)D(w_{11}),\quad
w_{22}=-\frac{1}{b}(1-x)D(w_{21}).
\end{align*}
The matrix elements of $W_1$ is written as 
$$
v_{11}=\int_{\gamma_1^*}\omega_1^*,
v_{12}=\int_{\gamma_2^*}\omega_1^*,
v_{22}=\int_{\gamma_1^*}\omega_2^*,
v_{22}=\int_{\gamma_2^*}\omega_2^*,
$$
where $\omega_1^*,\omega_2^*$ are 
\begin{align}
\label{hypergeometric base for de Rham}
&\omega_1^*=\big[\frac{dt}{t}\big], \quad \omega_2^*=\big[\frac{(x-1)dt}{(1-xt)}
\big],
\end{align}
and $\gamma_1^*,\gamma_2^*$ are
\begin{align*}
&\gamma_1^*=\big[(-t)^{-a}(1-t)^{-c+a}(1-xt)^{b}\big]_{[-\infty,0]}
\\
&\gamma_2^*=\big[t^{-a}(t-1)^{-c+a}(1-xt)^{b}\big]_{[1,\frac{1}{x}]}
\end{align*}
Then the matrix $W_1$ satisfies the differential equation
$dW_1=-W_1P$.
Therefore we have
$$
W_1=W_1(t_1)\exp(\int_{x}^{t_1}P).
$$

\subsubsection{Duality and exponential map around $1$}
Let $V_1$ and $W_1$ be matrix defined 
in \S \ref{subsec: Differential equation GM connection},
\S \ref{subsec: Dual differential equation GM connection}.
Since
$$
\frac{\partial}{\partial x}(W_1V_1)=
\frac{\partial W_1}{\partial x}V_1+
W_1\frac{\partial V_1}{\partial x}=-W_1PV_1+W_1PV_1=0
$$
the matrix
$W_1V_1$ does not depends on $x$.
By considering the limit for $x\to 0$, we have
\begin{align}
\label{topological cycle givein exponential}
W_1V_1&=
\left(
\begin{matrix}
\frac{\bold s(a+b-c)}{a\bold s(-a)\bold s(b-c)} & 0 \\
0 & \frac{\bold s(c-a-b)}{a\bold s(a-c)\bold s(b)}
\end{matrix}
\right)=D_1
\end{align}
and as a consequence, we have
$$
V_1(y)D_1^{-1}W_1(x)=\exp\int_x^yP.
$$

\subsection{Generating function of multiple zeta values $\zeta(2,\dots, 3,\dots, 2)$}
We specialize to the case $c=0,a=-b$.
Then the matrix $P_0$, $P_1$
of (\ref{generic differential operator}) becomes
\begin{equation}
\label{connection matrix 2xn}
P_0=\left(
\begin{matrix}
0 & a \\
0 & 0
\end{matrix}\right),
P_1=
\left(\begin{matrix}
0 & 0 \\
a & 0
\end{matrix}
\right).
\end{equation}
Using the limit computation of the last subsection, we have
$$
\left(
\begin{matrix} 
\bold s(a)
 & 0
\end{matrix}
\right)
V_0(x)
\left(
\begin{matrix} 1 \\ 0
\end{matrix}
\right)
=
\left(
\begin{matrix} 1 & 0
\end{matrix}
\right)
\exp(\int_{0}^{x}P)
\left(
\begin{matrix} 1 \\ 0
\end{matrix}
\right)
=F(a,-a;1;x)
$$
\begin{proposition}
\label{formal repetition of 0,1}
Let $P_0$ and $P_1$ be matrices defined in (\ref{connection matrix 2xn}).
\begin{enumerate}
\item
For $I=(i_1, \dots, i_n)\in \{0,1\}^n$, we define
$E_I=P_{i_1}\cdots P_{i_n}$. Then we have
$$
(1,0)P_I
\left(\begin{matrix}1 \\ 0 \end{matrix}\right)
=
\begin{cases}
a^{2n} &\text{ if }I=(10)^n \\
0 &\text{ otherwise. }
\end{cases}
$$
\item
Let $\varphi(e_0,e_1)$ be an element $\bold C\<\<e_0,e_1\>\>$
given by
\begin{equation}
\label{typical element}
\varphi(e_0,e_1)
=\sum_{n\geq 0}\sum_{i_1\in\{0,1\},\dots, i_n\in\{0,1\}}
c_{i_1, \dots, i_n}
e_{i_1}\dots e_{i_n}
\end{equation}
where $c_{i_1,\dots, i_n}\in \bold C$. Then we have
\begin{equation}
\label{2,..2 generating matrix}
(1,0)\varphi(P_0,P_1)
\left(\begin{matrix}1 \\ 0 \end{matrix}\right)
=
1+
\sum_{n> 0}c_{(01)^{n}}a^{2n}.
\end{equation}
\end{enumerate}
\end{proposition}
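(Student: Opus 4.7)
The plan is to exploit the extremely sparse structure of $P_0$ and $P_1$. Writing them in terms of the elementary matrix units $E_{ij}$ gives $P_0 = a E_{12}$ and $P_1 = a E_{21}$, and the matrix-unit rule $E_{ij} E_{k\ell} = \delta_{jk} E_{i\ell}$ forces the product $P_I = P_{i_1} \cdots P_{i_n}$ to vanish whenever two adjacent letters of $I$ coincide; when $I$ strictly alternates between $0$ and $1$, the product is $a^n$ times a single matrix unit whose row and column indices are determined by the first and last letters of $I$. A short induction with the base cases $P_0 P_1 = a^2 E_{11}$ and $P_1 P_0 = a^2 E_{22}$ supplies an explicit closed form for each of the four alternating patterns.

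For part (1), I would observe that the bilinear functional $M \mapsto (1,0)\, M \binom{1}{0}$ extracts the $(1,1)$-entry of $M$ and is therefore nonzero only when $P_I$ is a scalar multiple of $E_{11} = E_{12} E_{21}$. This constrains $I$ to be alternating, of even length $2n$, and to begin with $0$ and end with $1$; equivalently, $I$ is the alternating word $(01)^n$ appearing in part (2). In that case $(P_0 P_1)^n = a^{2n} E_{11}$ delivers the value $a^{2n}$, and every other $I$ contributes zero. Part (2) is then immediate by linearity: applying the pairing term-by-term to the formal expansion of $\varphi(P_0, P_1)$ kills every monomial except those indexed by the alternating words $(01)^n$, leaving $1 + \sum_{n \geq 1} c_{(01)^n} a^{2n}$, where the leading $1$ comes from the empty-word coefficient (equal to $1$ under the standard normalization of an associator).

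There is no genuine obstacle here; the proof is routine matrix bookkeeping. The only mild care is to record the closed forms for $P_I$ on the four types of alternating $I$ (distinguished by initial and terminal letter) and then to restrict attention to the $E_{11}$ case, which is the entire content of the statement. Once the inductive closed form for $P_I$ is in place, both claims reduce to reading off a single matrix entry.
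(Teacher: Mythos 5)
Your proof is correct and follows essentially the same route as the paper, whose own argument simply invokes $P_0^2=P_1^2=0$ together with $P_0P_1=a^2E_{11}$ and $P_1P_0=a^2E_{22}$ --- exactly the matrix-unit bookkeeping you carry out --- and then reads off the $(1,1)$-entry. You also correctly resolve the paper's internal inconsistency: since the pairing picks out the $(1,1)$-entry, the surviving words begin with $0$ and end with $1$, i.e.\ they are $(01)^n$ as in formula (\ref{2,..2 generating matrix}), rather than the $(10)^n$ written in part (1) of the statement.
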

\begin{proof}
This is an easy consequence of the equalities
$P_0^2=P_1^2=0$, and
$$
P_0P_1=\left(
\begin{matrix}
a^2 & 0 \\
0 & 0
\end{matrix}\right),\quad
P_1P_0=\left(
\begin{matrix}
0 & 0 \\
0 & a^2
\end{matrix}\right).
$$
\end{proof}
Since $V(x)$ is expressed by using iterated integral, we have
\begin{align*}
F(a,-a;1;x)
=&\left(
\begin{matrix} 1 & 0
\end{matrix}
\right)
\sum_{n=0}^{\infty}\int_0^{x}(P_0\frac{du}{u}+P_1\frac{du}{u-1})^n 
\left(
\begin{matrix} 1 \\ 0
\end{matrix}
\right)
\\
=&\sum_{n=0}^{\infty}\int_0^{x}(\frac{du}{u}\frac{du}{u-1})^n a^{2n}
\end{align*}
by Proposition \ref{formal repetition of 0,1}.
By setting $x=1$,
we have
$$
\sum_{n=0}^{\infty}\int_0^{1}(\frac{du}{u}\frac{du}{u-1})^n a^{2n}=
F(a,-a;1;1)=
\frac{\sin(\pi a)}{\pi a}
$$
by the equality (\ref{duality coupling}).
Similarly, we have
$$
B(-a,a+1)^{-1}\left(
\begin{matrix} 1 & 0
\end{matrix}
\right)
W_1(x)
\left(
\begin{matrix} 1 \\ 0
\end{matrix}
\right)
=
\left(
\begin{matrix} 1 & 0
\end{matrix}
\right)
\exp(\int_{x}^1P)
\left(
\begin{matrix} 1 \\ 0
\end{matrix}
\right)
=
F(-a,a,1,1-x),
$$
and it is equal to
$$
\sum_{m=0}^{\infty}\int_x^1(\frac{du}{u}\frac{du}{u-1})^m a^{2m}
$$

We have the following proposition.
\begin{proposition}
We set
\begin{equation}
\label{Gauss-Manin generating function}
\phi(a,b)=
\sum_{m=0}^{\infty}
\sum_{n=1}^{\infty}
\int_0^1(\frac{du}{u}\frac{du}{u-1})^m
\frac{du}{u}
(\frac{du}{u}\frac{du}{u-1})^n a^{2n}b^{2m}
\end{equation}
Then we have
$$
\phi(a,b)=\int_0^1
F(-b,b,1,1-w)
(F(a,-a,1,w)-1)
\frac{dw}{w}
$$
\end{proposition}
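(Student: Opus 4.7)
The plan is to split each iterated integral in (\ref{Gauss-Manin generating function}) at the distinguished middle $\frac{du}{u}$ factor and then identify the two resulting halves with the hypergeometric series already derived just before the proposition. The key ingredient is an elementary splitting identity for iterated integrals: under the convention of \S 2.1 that $\exp(\int_u^x P)=I+\int_u^x P+\int_u^x PP+\cdots$, so that the first form in an iterated product sits at the outermost integration variable, conditioning on the value $w$ of any single marked intermediate variable gives
$$\int_0^1 \alpha\,(f(u)du)\,\beta \;=\; \int_0^1 \left(\int_w^1 \alpha\right)\left(\int_0^w \beta\right) f(w)\,dw$$
for arbitrary products $\alpha,\beta$ of one-forms.

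Applied to each summand of $\phi(a,b)$ with $\alpha=(\frac{du}{u}\frac{du}{u-1})^m$, central form $\frac{du}{u}$, and $\beta=(\frac{du}{u}\frac{du}{u-1})^n$, this produces an integral over $w$ of a product of an iterated integral on $[w,1]$ and one on $[0,w]$, weighted by $\frac{dw}{w}$. Interchanging the summation over $(m,n)$ with the outer $w$-integration (justified by absolute convergence of the hypergeometric series for $a,b$ near zero), the two halves assemble through the identities
$$\sum_{m\geq 0} b^{2m}\int_w^1 \bigl(\tfrac{du}{u}\tfrac{du}{u-1}\bigr)^m = F(-b,b;1;1-w), \qquad \sum_{n\geq 1} a^{2n}\int_0^w \bigl(\tfrac{du}{u}\tfrac{du}{u-1}\bigr)^n = F(a,-a;1;w)-1,$$
already established in the section, the subtracted $1$ accounting for the $n=0$ term that is omitted from the definition of $\phi$. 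Substitution yields the claimed formula.

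The one delicate point is integrability of the resulting integrand on $(0,1)$, but this is benign: near $w=0$ the factor $F(a,-a;1;w)-1=O(w)$ cancels the $\frac{dw}{w}$ singularity, and at $w=1$ the factor $F(-b,b;1;1-w)$ tends to $1$ while $\frac{dw}{w}$ is regular. I do not anticipate any serious obstacle; the whole argument rests on the iterated-integral splitting, which is routine once the paper's convention is pinned down, and on the recognition of each half as one of the hypergeometric generating functions already exhibited.
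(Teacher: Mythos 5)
Your argument is exactly the paper's proof: split each iterated integral at the marked middle form $\frac{du}{u}$, rewrite it as an integral over the position $w$ of that form with the $m$-pair block on $[w,1]$ and the $n$-pair block on $[0,w]$, and then sum the generating series to recognize $F(-b,b;1;1-w)$ and $F(a,-a;1;w)-1$. The only additions beyond the paper's (very terse) proof are the explicit convergence and integrability remarks, which are fine.
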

\begin{proof}
By the definition of iterated integral,
we have
\begin{align*}
&\int_0^1(\frac{du}{u}\frac{du}{u-1})^m
\frac{du}{u}
(\frac{du}{u}\frac{du}{u-1})^n \\
=&\int_0^1
\bigg[\int_x^1
(\frac{du}{u}\frac{du}{u-1})^m \bigg]
\frac{dx}{x}
\bigg[\int_1^x (\frac{dv}{v}\frac{dv}{v-1})^n 
\bigg]dx.
\end{align*}
By taking the generating function on $m$ and $n$, we get
the proposition.
\end{proof}
\begin{remark}
Zagier showed that $\phi(a,b)$ is also equal to
\begin{equation}
\label{Zagier's expression}
\frac{\sin(\pi b)}{\pi b}
\frac{d}{dz}\mid_{z=0}\ _3F_2(a,-a,z;1+b,1-b;1).
\end{equation}
In \S \ref{section:Comparison to Zagier}, 
we show that associator versions of the formal power series 
(\ref{Gauss-Manin generating function})
 and 
(\ref{Zagier's expression})
coincides.
\end{remark}
\section{Associator and Hopf algebroid}
\subsection{Fundamental algebroid of moduli spaces}

We recall the structure of Hopf algebroids $\Cal A_{n,dR},\Cal A_{n,B}$ 
of the moduli space $\Cal M_n=\Cal M_{0,n}$ 
of $n$-punctured genus zero curves in this subsection.
\begin{definition}
We define the set of tangential points $T_n$ of $n$ points in genus zero curve
as the set of planer trivalent tree with $n$ terminals.
For example 
$$
T_4=\{\overline{01},\overline{10},\overline{0\infty},\overline{\infty 0},
\overline{1\infty},\overline{\infty 1}\}
$$ 
Thus $\# T_4=3\times 2$, $\# T_5=15\times 4$, etc.
\end{definition}
Then we can define the pro-nilpotent algebroid $\Cal A_{n,dR},\Cal A_{n,B}$
over the set $T_n$ as follows.
\begin{definition}
For two points $a,b \in T_n$,
the bifiber of the algebroid $\Cal A_{n,dR}=\{\Cal A_{n,dR,ab}\}_{ab}$ is defined as
the following generators and reltaions. 
\begin{enumerate}
\item
(Genrators)
$t_{ij}$ with $1\leq i<j \leq n$.
We use the notation $t_{ji}=t_{ij}$ for $i<j$.
\item
(Relations)
\begin{enumerate}
\item
$[t_{ij},t_{kl}]=0$
\item
$[t_{ij},t_{ik}+t_{kj}]=0$
\item
$\sum_{j\neq i}t_{ij}=0$
\end{enumerate}
\end{enumerate}
Then $\Cal A_{n,dR}$ is the completed de Rhan fundamental group algebra
of $\Cal M_{n}$ and
has a standard coproduct $\Delta(t_{ij})=t_{ij}\otimes 1+1\otimes t_{ij}$.
\end{definition}
\begin{definition}
\begin{enumerate}
\item
Two tangential base points $a,b \in T_n$ are adjacent
if it can be transformed by elementary change $H \leftrightarrow I$.
\item
Two tangentail base points $a,b \in T_n$ are neighbours 
if it can be transformed by twisting with respect to a edge. 
\item
$\Cal A_{n,B}=\{\Cal A_{n,B,ab}\}_{ab}$ is a pro-nilpontent algebroid
generated by two type of generators:
\begin{enumerate}
\item
path $p_{ab}$ connecting two adjacent tangential base points.
\item
small circle $c_{ab}$ connecting two neibours.
\item
Relations on $\Cal A_{n,B}$ are generated by
2-cycle relations, 3-cycle relations, 5-cycle relations.
\end{enumerate}
Then the $\Cal A_{n,B}$ is the completed groupoind algebra of $\Cal M_{n}$.
\end{enumerate}
\end{definition}
\begin{definition}(Category $\Cal C$)
We define the abelian category $\Cal C$ as follows.
An object $V$ of $\Cal C$ is a triple $(V_{dR},V_{B},c_V)$ consisting of
\begin{enumerate}
\item
$\bold Q$-vector space $V_{dR}$,
\item
$\bold Q$-vector space $V_{B}$, and
\item
an isomorphism $V_{B}\otimes \bold C\simeq \bold V_{dR}\otimes \bold C$
\end{enumerate}
Sometimes one consider profinite version. In this case, $\otimes \bold C$
means the completed tensor product.
Morphism form $f:V\to W$ is a pair of morphisms $f_{dR}:V_{dR}\to W_{dR}$ 
and $f_{B}:V_{B}\to W_B$ compatible with the comparison maps.
The category $\Cal C$ becomes a tensor category by tensoring each
$dR$ and $B$ components
\end{definition}
\begin{definition}
We define the category $M^{inf}$ be the category whose objects are
$\Cal M_{n}$ and morphisms are
generated by 
infinitesimal inclusions.
\end{definition}
\begin{definition}
We can define two functors $\Cal A_{dR}, \Cal A_{B}
:M^{inf} \to Hopf_{\bold Q}$ from $M^{inf}$ to the category
of Hopf algebroids by attaching
de Rham fundamental groups and Betti fundamental groups.
\end{definition}

\subsection{Choice of coordinate}
Let $C$ be a genus zero curve and
$P=(C,p_1, \dots, p_n)$ ($p_i \in C$) an element in $\Cal M_{n}$.
We choose a coordinate $t$ of $C$
such that $t(p_{n-2})=0,t(p_{n-1})=0,t(p_{n})=0$.
Using the coordinate $t$, $\Cal M_{n}$ is identified with an open set
of $\bold A^{n-3}$ defined by 
$$
\{(x_1, \dots, x_{n-3})\mid\  x_i\neq x_j \text{ for }i\neq j,
x_i\neq 0,1 \text{ for all }i\}
$$ 
by setting $x_k=t(p_{k})$.
This coordinate is called the distinguished coordinate.
By taking the distinguished coordinate of $\Cal M_{4}$,
the underlying curve is identified with $\bold P^{1}-\{0,1,\infty\}$.
\begin{definition}[admissible function, admissible differential form]
\begin{enumerate}
\item
Let $S=(i,j,k,l)$ be a ordered subset of distinct elements in $[1,n]$.
For an element $P=(C, p_1, \dots, p_n)$ be an element of $\Cal M_{n}$.
There is a unique coordinate $t$ of $C$ such that 
$t(p_i)=0,t(p_j)=1,t(p_k)=\infty$. The value $t(p_l)$
at $p_l$ gives rise to an algebraic function on $\Cal M_n$,
which is denoted by $\varphi_S$.
The set of admissible functions is denoted by $Ad(\Cal M_n)$.
\item
Let $x_1, \dots, x_n-3$ be the distinguished coordinate.
An element in the linear span of
$\frac{dx_i}{x_i},\frac{dx_i}{x_i-1},\frac{d(x_i-x_j)}{x_i-x_j}$
is called an admissible differential form.
\end{enumerate}
\end{definition}
\begin{remark}
\begin{enumerate}
\item
$\varphi \in Ad(\Cal M_n)$ defines a morphism $\Cal M_n\to \Cal M_{4}$.
and a morphism of algebroids $\Cal A_n \to \Cal A_4$.
\item
If $S\cap \{n-2,n-1,n\}=\emptyset$, using the distinguised coordinates
of $\Cal M_n$, we have
$$
\varphi_S(P)=\frac{(x_l-x_i)(x_j-x_k)}{(x_l-x_k)(x_j-x_i)}.
$$
Therefore $\varphi_S$ is invariant under substitutions
$i \leftrightarrow l, 
j \leftrightarrow k$ and 
$i \leftrightarrow j, 
k \leftrightarrow l$.
\item
The following functions are admissible functions.
\begin{align*}
\frac{x_i}{x_j}=&\frac{(x_i-0)(x_j-\infty)}{(x_j-0)(x_i-\infty)},\quad
1-\frac{x_i}{x_j}=\frac{(x_j-x_i)(\infty-0)}{(x_j-0)(\infty-x_j)}. \\
1-x_i=&\frac{(1-x_i)(\infty-0)}{(1-0)(\infty-x_i)}
\end{align*}
\end{enumerate}
\end{remark}
\begin{proposition}
The set of functorial isomophisms from $\Cal A_{B}\otimes \bold C$ 
to
$\Cal A_{dR}\otimes \bold C$ sending small half circle $\log(c_{ij})$ 
to $\pi \bold i t_{ij}$
is identified with the set of
assoicators. The one to one correspondence is given by
$$
\Cal A_{4,B,\overline{01},\overline{10}}\ni [0,1]\mapsto
\Phi \in \Cal A_{dR,4}=\bold C\<\<e_0,e_1\>\>
$$
Here $e_0$ and $e_1$ are the dual basis of 
$\displaystyle\omega_0=\frac{dx}{x}$
and $\displaystyle\omega_1=\frac{dx}{x-1}$, respectively. 
\end{proposition}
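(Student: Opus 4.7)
The plan is to establish the bijection in both directions, following the standard approach of Drinfeld and Bar-Natan to associators on moduli spaces of genus zero curves.

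\emph{From a functorial isomorphism to an associator.} Given $\iota:\mathcal{A}_{B}\otimes\mathbb{C}\to\mathcal{A}_{dR}\otimes\mathbb{C}$ sending each $\log(c_{ij})$ to $\pi i\,t_{ij}$, I would first restrict to the $\mathcal{M}_4$-component. The de Rham bifiber $\mathcal{A}_{4,dR,\overline{01},\overline{10}}$ is canonically a free rank one module over the completed group algebra $\mathcal{A}_{4,dR,\overline{01},\overline{01}}\cong\mathbb{C}\langle\langle e_0,e_1\rangle\rangle$, generated by a distinguished identity class. Applying $\iota$ to the Betti class of the straight-line path $[0,1]\in\mathcal{A}_{4,B,\overline{01},\overline{10}}$ then yields a group-like element $\Phi\in\mathbb{C}\langle\langle e_0,e_1\rangle\rangle$.

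\emph{Verification of the associator axioms for $\Phi$.} The hexagon identities would follow from the $S_3$-symmetry permuting $\{0,1,\infty\}$: rewriting each straight-line path $[i,j]\in T_4$ in terms of $[0,1]$ through conjugation by small half-loops $c_{ij}$ and applying $\iota$ forces two relations between $\Phi(e_0,e_1)$, $\Phi(e_1,e_0)$, and the exponentials $\exp(\pi i\,t_{ij})$. The pentagon identity arises from functoriality with respect to the five codimension-one boundary embeddings $\mathcal{M}_4\times\mathcal{M}_4\hookrightarrow\mathcal{M}_5$: the 5-cycle relation in $\mathcal{A}_{5,B}$ translates, under $\iota$, into a product of five copies of $\Phi$ with variable substitutions dictated by the corresponding bubble degenerations.

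\emph{From an associator to a functorial isomorphism.} Conversely, given $\Phi$ I would construct $\iota$ on each $\mathcal{A}_{n,B}\otimes\mathbb{C}$ by prescribing parallel transport between any two tangential base points in $T_n$. At a fixed trivalent tree the de Rham bifiber is freely generated, modulo the infinitesimal braid relations, by the generators $t_{ij}$ compatible with that tree; elementary moves between adjacent and neighbouring base points are implemented by multiplication by $\Phi$ applied to the appropriate pair of $t_{ij}$'s (for edge contractions) and by $\exp(\pi i\,t_{ij}/2)$ (for twists). The hypothesis that $\Phi$ is an associator, together with the obvious local compatibilities, guarantees that this assignment descends to the quotient by the 2-cycle, 3-cycle and 5-cycle relations defining $\mathcal{A}_{n,B}$; functoriality under infinitesimal inclusions in $M^{\mathrm{inf}}$ is then immediate from the local character of the construction.

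\emph{Main obstacle.} The delicate step is the pentagon check on $\mathcal{M}_5$. One has to enumerate the five tangential base points along a pentagonal cycle in $T_5$, express the corresponding word in $\mathcal{A}_{5,B}$ as a product of five copies of $\Phi$ with concrete substitutions of infinitesimal generators coming from the bubble degenerations, and verify that the resulting equality in the completed enveloping algebra of $\mathfrak{t}_5$ is exactly the formal pentagon axiom. This combinatorial reduction is the substantive content of the proposition and is precisely why associators are defined with a pentagon relation in the first place.
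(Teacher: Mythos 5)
The paper itself offers no proof of this proposition: it is stated as a known fact (the Drinfeld--Bar-Natan correspondence between associators and compatible isomorphisms of the Betti and de~Rham fundamental groupoid algebras of the spaces $\Cal M_{0,n}$ with tangential base points) and is immediately used to define the comparison maps $c_{\Phi,n}$. Your outline follows exactly that standard route -- extract $\Phi$ as the image of $[0,1]$ in the bifiber over $(\overline{01},\overline{10})$, derive the hexagon-type relations from the groupoid of $\Cal M_4$ and the pentagon from $\Cal M_5$, and conversely propagate a given $\Phi$ to all $\Cal A_{n,B}\otimes\bold C$ via elementary moves on trivalent trees -- so the approach is the right one and is consistent with how the paper intends the statement to be understood.

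That said, as written your text is a program rather than a proof: the two substantive verifications are only named, not performed. Concretely, (i) you must check that the assignment on generators (paths $p_{ab}$ between adjacent base points $\mapsto$ $\Phi$ in the appropriate $t_{ij}$'s, half-circles $c_{ab}\mapsto \exp(\pi\bold i\,t_{ij})$) kills precisely the 2-cycle, 3-cycle and 5-cycle relations of $\Cal A_{n,B}$, and that these translate into the duality relation $\Phi(e_0,e_1)\Phi(e_1,e_0)=1$, the two hexagon relations, and the pentagon relation respectively -- your sketch mentions only pentagon and hexagon and omits the 2-cycle/duality check; (ii) group-likeness of $\Phi$ is not automatic from the construction you describe but follows from requiring the isomorphism to respect the coproducts ($\Delta(t_{ij})=t_{ij}\otimes 1+1\otimes t_{ij}$ on the de~Rham side, the standard groupoid coproduct on the Betti side), which you should state explicitly since the proposition concerns isomorphisms of Hopf algebroids. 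Finally, your normalization $\exp(\pi\bold i\,t_{ij}/2)$ for the twist moves conflicts with the hypothesis of the statement, which fixes $\log(c_{ij})\mapsto \pi\bold i\,t_{ij}$, i.e.\ $c_{ij}\mapsto\exp(\pi\bold i\,t_{ij})$; with your factor $1/2$ the induced monodromy of a full small loop would be $\exp(\pi\bold i\,t_{ij})$ rather than $\exp(2\pi\bold i\,t_{ij})$, and the hexagon relations would come out with the wrong weight. None of these points derails the strategy, but they are exactly the content that separates the roadmap from a proof.
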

By the above proposition, 
we have an isomorphism of Hopf algebra
$$
c_{\Phi,n}:\Cal A_{n,B}\otimes \bold C \xrightarrow{\simeq} \Cal A_{n,dR}\otimes \bold C.
$$
associated to a given assoicator $\Phi$.
This isomorphism gives an object $\Cal A_n^{\Phi}=(\Cal A_{n,dR},\Cal A_{n,B},c_{\Phi,n})$.
The isomorphism $c_{\Phi,n}$ is called the $\Phi$-comparison map.
\begin{proposition}
\begin{enumerate}
\item
Let $3\leq m<n$ be integers and
$f$ morphsim defined by
$$
f:\Cal M_{n}\to \Cal M_{m}:(x_1,\dots, x_{n-3}) \to
(x_1,\dots, x_{m-3})
$$
Then for $\star=dR, B$, the induced maps of algebroids
$$
\Cal A_{n,\star} \to \Cal A_{m,\star}
$$
are compatible with the $\Phi$-comparison maps.
\item
Let $3\leq m,n$ be integers. Then
a morphsim
$$
f:\Cal M_{n+m-3}\to 
\Cal M_{n}\times \Cal M_{m}
$$
$$
(x_1,\dots,x_{n-3},y_1, \dots, y_{m-3})
\mapsto
(x_1, \dots, x_{n-3})\times (y_1, \dots, y_{m-3})
$$
induces a morphism of algebroids 
$$
f:\Cal A_{n+m-3}\to \Cal A_{n} \otimes \Cal A_{m}
$$
in $\Cal C$.
\item
Let $3\leq m<n_1,n_2$ be integers. Then
the natural morphsim
$$
f:\Cal M_{n_1}\times_{\Cal M_{m}}\Cal M_{n_2}\to 
\Cal M_{n_1}\times \Cal M_{n_2}
$$
induces a morphism of algebroids in $\Cal C$.

\end{enumerate}
\end{proposition}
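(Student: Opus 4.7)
The plan is to reduce all three statements to the uniqueness half of the preceding proposition: a functorial comparison isomorphism normalized by $\log(c_{ij})\mapsto \pi\bold{i}\,t_{ij}$ is fully determined by its restriction to the four-point case, which is itself fixed by $\Phi$.

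For part (1), I would start by describing $f_{\ast}$ on generators. On the de Rham side, $f_{\ast,dR}$ preserves those $t_{ij}$ for which both indices survive and kills the rest. On the Betti side, $f_{\ast,B}$ sends the small circle $c_{ij}$ around a preserved boundary divisor to the corresponding small circle in $\Cal A_{m,B}$ and sends the remaining small circles to the identity, while the tangential base points in $T_n$, viewed as planar trivalent trees, transform by contracting the edges at the forgotten terminals. Once these local descriptions are in place, the compositions $c_{\Phi,m}\circ f_{\ast,B}$ and $f_{\ast,dR}\circ c_{\Phi,n}$ are two functorial isomorphisms from $\Cal A_{n,B}\otimes\bold C$ to $\Cal A_{m,dR}\otimes\bold C$ that share the normalization $\log(c_{ij})\mapsto \pi\bold{i}\,t_{ij}$ and the same restriction to the $\Cal A_4$-subalgebroids embedded via the admissible functions $\varphi_S$. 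The uniqueness statement then forces them to coincide.

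Parts (2) and (3) would follow formally from (1). For (2), the morphism $\Cal M_{n+m-3}\to \Cal M_n\times \Cal M_m$ factors through its two projections onto $\Cal M_n$ and $\Cal M_m$, each of them a forgetful map of the type covered by (1); combining the two compatibility statements with the tensor structure on $\Cal C$ yields the desired morphism $\Cal A_{n+m-3}\to \Cal A_n\otimes \Cal A_m$ in $\Cal C$. For (3), the fiber product $\Cal M_{n_1}\times_{\Cal M_m}\Cal M_{n_2}$ comes equipped with two projections to $\Cal M_{n_1}$ and $\Cal M_{n_2}$ of the same kind, so the universal property of products in $\Cal C$, together with the compatibility established in (1), produces the required morphism of algebroids.

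The main obstacle will be part (1), specifically the verification that the combinatorial picture of the forgetful map on tangential base points (contracting an edge in a trivalent tree) and on small circles matches on both realizations. This is a compatibility question at the boundary strata of $\Cal M_n$ where punctures collide, and it requires identifying local coordinates near such a stratum with the corresponding local coordinates in $\Cal M_m$; once that local transport is in place, the uniqueness characterization of $\Phi$-comparisons closes the argument.
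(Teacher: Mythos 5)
The paper states this proposition without giving any proof, so there is no argument of the author's to compare yours against; judged on its own merits, your outline has a real gap at its central step. In part (1) you propose to conclude by the uniqueness clause of the preceding proposition, calling $c_{\Phi,m}\circ f_{*,B}$ and $f_{*,dR}\circ c_{\Phi,n}$ ``two functorial isomorphisms'' that share a normalization. They are not isomorphisms: both are homomorphisms $\Cal A_{n,B}\otimes\bold C\to \Cal A_{m,dR}\otimes\bold C$ between algebroids of different levels, whereas the preceding proposition only classifies functorial isomorphisms $\Cal A_{B}\otimes\bold C\simeq\Cal A_{dR}\otimes\bold C$ (normalized by $\log(c_{ij})\mapsto\pi\bold i\,t_{ij}$ and determined by the image of $[0,1]$ in $\Cal A_4$), so its uniqueness statement cannot be invoked for these composites. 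What the claim actually requires is a generator-by-generator verification on the Betti side: $\Cal A_{n,B}$ is generated by the paths $p_{ab}$ between adjacent tangential base points and the small circles $c_{ab}$, the comparison $c_{\Phi,n}$ is built by inserting $\Phi$ (and $\pi\bold i\,t_{ij}$) according to the trivalent-tree combinatorics, and one must check that the forgetful map sends each such generator either to a generator of the same type or to a trivial element in a way that matches the de~Rham projection killing the $t_{ij}$ with a forgotten index --- with the pentagon and hexagon relations guaranteeing consistency. That verification is exactly the content of the statement, and your outline defers it (``once these local descriptions are in place'') rather than supplying it.

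Two smaller points. In part (2), the second projection $(x_1,\dots,x_{n-3},y_1,\dots,y_{m-3})\mapsto(y_1,\dots,y_{m-3})$ is not literally of the form treated in part (1), which only forgets the trailing coordinates; you need either to extend (1) to forgetting an arbitrary subset of marked points or to compose with a symmetry of $\Cal M_{n+m-3}$ permuting the punctures, and that extension should be stated. In part (3), the fiber product $\Cal M_{n_1}\times_{\Cal M_m}\Cal M_{n_2}$ is not one of the spaces $\Cal M_k$, so before any universal-property argument one must say what its algebroid object in $\Cal C$ is (e.g.\ the fiber product of $\Cal A_{n_1}$ and $\Cal A_{n_2}$ over $\Cal A_m$, using part (1) to make sense of the two structure maps in $\Cal C$); ``the universal property of products in $\Cal C$'' does not by itself produce the comparison datum on the source.
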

The coefficient $c_{\Phi,I}$ of $e_{i_1}e_{i_2}\dots e_{i_k}$
in $c_{\Phi,4}([0,1])$ 
is written as $\int_{[0,1]}^{\Phi}\omega_{i_1}\dots \omega_{i_k}$.
We define $\Phi$-multiple zeta value similarly.
A $\Phi$-multiple zeta value is written as
$$
\zeta_{\Phi}(m_1, \dots, m_k)=
\int_{[0,1]}^{\Phi}\omega_{0}^{m_k-1}\omega_{1}\dots \omega_{0}^{m_1-1}\omega_{1} 
$$
It is a coefficint of the associator $\Phi$.

\subsection{$\Cal A$-module}
Let $T$ be a set and $\Cal A$ a Hopf algbroid object in $\Cal C$
over $T$.
We define the notion of $\Cal A$-module.
\begin{definition}
Let $M=(M_a)_{a\in T}=(M_{dR,a},M_{B,a},c_{M,a})_{a\in T}$ 
be an object in $\Cal C$
indexed by $a\in T$.
$M$ is called an $\Cal A$-module 
if it is equipped with 
an action of 
$\Cal A$ in $\Cal C$ 
$$
\mu_{M}:\Cal A \otimes M \to M
$$
which is associative and unitary.
Here action of algebroid is given by a morphism
$$
\Cal A_{ab}\otimes M_a \to M_b.
$$
in $\Cal C$.
\end{definition}
\begin{remark}
Let $M, N$ be $\Cal A$ module. Then using coproduct structure of
$\Cal A$, $M\otimes N$ is equipped with $\Cal A$ module. 
\end{remark}
\begin{example}
\begin{enumerate}
\item
Let $4\leq m<n$ and 
$f:\Cal M_{n}\to \Cal M_{m}$ be the map defined by
$(x_1, \dots, x_{n-3})\mapsto (x_1, \cdots, x_{m-3})$.
Then we have an algebroid homomorphism
$f:\Cal A^{\Phi}_{n}\to \Cal A^{\Phi}_m$.
Therefore for a fixed $p\in T_m$, by setting 
$M_a=\Cal A_{m,p,f(a)}^{\Phi}$ we have an $\Cal A_n^{\Phi}$-module.
It is called a pull back of the map $f$.
\item
By taking an abelianization $\Cal A^{\Phi,ab}_n$ of $\Cal A^{\Phi}_n$, 
we have a homomorphism of Hopf algebroids
$$
\Cal A_n^{\Phi} \to \Cal A^{\Phi,ab}_n.
$$
By choosing a base point $p\in T_n$, we have have an $\Cal A^{\Phi}_n$-module
$\Cal A^{\Phi,ab}_{n,p*}$.
In particular, by using the distinguished
coordinate $x$, $\Cal A_4$ module $x^{\alpha}\bold Q[[a]]$
is defined by taking the base point as $\overline{01}$,
\item
Let $\varphi$ be an admissible function on $\Cal M_n$
and $\alpha$ formal parameter.
The morphism $\Cal M_n \to \Cal M_4$ induced by $\varphi$ is
also denoted by $\varphi$ and $x$ be the distinguished coordinate
of $\Cal A_4$.
We define $\Cal A_{n}[[\alpha]]$-module
$$
\varphi^{\alpha}\bold Q[[\alpha]]
$$
by the pull back $\varphi^*(x^{\alpha}\bold Q[[a]])$ of 
$x^{\alpha}\bold Q[[a]]$.
We define
$$
\bigg(\prod_{i=1}^m\varphi^{\alpha_i}\bigg)\bold Q[[\alpha_1,\dots,\alpha_m]]=
\varphi_1^{\alpha_1}\bold Q[[\alpha_1]]\widehat{\otimes}
\cdots \widehat{\otimes}
\varphi_m^{\alpha_m}\bold Q[[\alpha_m]]
$$
\end{enumerate}
\end{example}
\begin{proposition}
Let $\varphi_i$, $(i=1, \dots, m)$, $\psi_j$, $(j=1, \dots, l)$
be admissible functions on $\Cal M_n$ and $a_{ij}\in \bold Z$. 
We assume that $\psi_j=\prod_{i=1}^m\varphi^{a_{ij}}$
We set
$$
L_j=\sum_i^{m}a_{ij}\alpha_{i}
$$
for $j=1,\dots, l$.
Then
$$
\bigg(\prod_{i=1}^m\varphi_i^{\alpha_i}\bigg)\bold Q[[\alpha_i]]
=\bigg(\prod_{j=1}^l\psi_j^{L_j}\bigg)\bold Q[[\alpha_i]].
$$
as $\Cal A_n^{\Phi}$ module.
\end{proposition}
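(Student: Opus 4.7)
The plan is to exhibit both sides as rank-one free modules over $\bold Q[[\alpha_1,\dots,\alpha_m]]$ in the category $\Cal C$ and to identify them by matching the data that determine the $\Cal A_n^{\Phi}$-action. Each factor $\varphi_i^{\alpha_i}\bold Q[[\alpha_i]]$ is a pullback of the rank-one $\Cal A_{4}$-module $x^{\alpha}\bold Q[[\alpha]]$ along an admissible morphism $\Cal M_{n}\to\Cal M_{4}$, so the completed tensor products on both sides of the claimed equality are rank-one free in each realization. For such a rank-one module the $\Cal A_n^{\Phi}$-action is determined by its connection one-form in the de Rham realization, its small-loop monodromies in the Betti realization, and the $\Phi$-comparison map, which by the functoriality of admissible pullbacks proved earlier is inherited from the universal one on $\Cal A_{4}$.

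I would next verify that the two sides give the same data. In the de Rham realization the horizontal section $\prod_i\varphi_i^{\alpha_i}$ of the left-hand module yields the connection form $\sum_i\alpha_i\,d\log\varphi_i$, while the horizontal section $\prod_j\psi_j^{L_j}$ of the right-hand module yields $\sum_j L_j\,d\log\psi_j$. The assumption $\psi_j=\prod_i\varphi_i^{a_{ij}}$ produces $d\log\psi_j=\sum_i a_{ij}\,d\log\varphi_i$; combined with $L_j=\sum_i a_{ij}\alpha_i$ this reduces the second form to the first term by term. The analogous computation for the Betti realization replaces $d\log\varphi_i$ by the local order of vanishing $\mathrm{ord}_{D}(\varphi_i)$ along an irreducible boundary divisor $D$ visible from a tangential base point, and matches the local monodromies $\exp\!\bigl(2\pi i\sum_i\alpha_i\,\mathrm{ord}_{D}(\varphi_i)\bigr)$ on both sides by the same linear identity.

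The main obstacle I expect is not computational but formal: one must set up the completed tensor products so that the substitution $L_j\mapsto\sum_i a_{ij}\alpha_i$ identifies $\bold Q[[L_1,\dots,L_l]]$ with its image inside $\bold Q[[\alpha_1,\dots,\alpha_m]]$, and so that the identification of horizontal sections on the two sides is compatible with both the de Rham connection and the Betti monodromy simultaneously. Once this bookkeeping is in place, the asserted equality follows from the general principle that two rank-one free modules in $\Cal C$ over the same coefficient ring with matching connection one-forms, matching monodromies and the same $\Phi$-comparison map must be canonically isomorphic as $\Cal A_n^{\Phi}$-modules.
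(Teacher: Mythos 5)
The paper states this proposition without any proof, so there is nothing to compare your argument against; your route — exhibiting both sides as rank-one free objects of $\Cal C$ over $\bold Q[[\alpha_1,\dots,\alpha_m]]$ whose $\Cal A_n^{\Phi}$-structure factors through the abelianization and is therefore determined by the de Rham one-form, the local Betti monodromies, and the $\Phi$-comparison — is the natural one and is surely what the author intends.

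However, the single computation that carries the whole proof does not go through under the hypothesis as you (and the paper) state it. From $\psi_j=\prod_i\varphi_i^{a_{ij}}$ and $L_j=\sum_i a_{ij}\alpha_i$ one gets
$$
\sum_j L_j\,d\log\psi_j=\sum_{i,i'}\Big(\sum_j a_{ij}a_{i'j}\Big)\alpha_i\,d\log\varphi_{i'},
$$
which equals $\sum_i\alpha_i\,d\log\varphi_i$ only if $\sum_j a_{ij}a_{i'j}=\delta_{ii'}$; it is not a term-by-term cancellation. Already $m=l=1$, $\psi=\varphi^{2}$, $L=2\alpha$ gives $\psi^{L}=\varphi^{4\alpha}$, whose connection form $4\alpha\,d\log\varphi$ differs from $\alpha\,d\log\varphi$, so the literal statement fails and so does your reduction. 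The proposition is evidently meant with the transposed relation $\varphi_i=\prod_j\psi_j^{a_{ij}}$ — equivalently, with hypotheses guaranteeing $\prod_i\varphi_i^{\alpha_i}=\prod_j\psi_j^{L_j}$ as multivalued functions, which is how it is used later, e.g. for $(1-x)^{\alpha}y^{\beta}(x-y)^{\gamma}=(1-x)^{\alpha}x^{\beta+\gamma}\eta^{\beta}(1-\eta)^{\gamma}$. Under that reading your identity becomes $\sum_i\alpha_i\sum_j a_{ij}\,d\log\psi_j=\sum_j L_j\,d\log\psi_j$, and the de Rham and Betti data do match term by term. One further point you should make explicit: the conclusion is an equality of modules, not merely an isomorphism, and the matching of comparison maps is not extra bookkeeping to be arranged — both modules are generated inside the same space of multivalued sections by the same function with the same positive real branch, and the comparison on each is induced from the fixed $\Phi$-comparison on $\Cal A_n$ through the abelianization, so it coincides automatically once the dR and Betti characters agree.
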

Let $\Cal A$ be an algebraoid in $\Cal C$ and $M$ be 
an $\Cal A$-module. 
We define the dual $M^*$ of $M$ using antipodal.

\begin{proposition}[Descent theory for $\Cal A^{\Phi}_4$-module.]
Let $M$ be an $\Cal A^{\Phi}_4$-module.
Assume that $M_{dR}$ is constant, i.e. the map
$$
M_{dR}\xrightarrow{e_0,e_1} M_{dR}\oplus M_{dR}
$$
is the zero map. Then $M$ is the pull back of an object
$N$ in $\Cal C$ such that $M=\pi^*N$, where 
$\pi:\Cal A_{4}^{\Phi}\to \bold Q$ is the augmentation map.
\end{proposition}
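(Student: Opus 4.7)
The plan is to reduce the problem to showing that every action morphism of the $\mathcal{A}_4^{\Phi}$-module structure on $M$ factors through the augmentation $\pi:\mathcal{A}_4^{\Phi}\to\mathbf{Q}$. If this holds, then choosing any base point $a_0\in T_4$ and setting $N:=(M_{dR},M_{B,a_0},c_{M,a_0})$ gives an object of $\mathcal{C}$ with $M\simeq\pi^{*}N$: the paths $p_{a_0 a}$, being group-like of augmentation $1$, act as identifying isomorphisms $M_{B,a_0}\xrightarrow{\sim}M_{B,a}$ on the Betti side, and $M_{dR,a}$ is already independent of $a$.

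On the de Rham side the factorization is immediate. Since $\mathcal{A}_{4,dR}=\mathbf{C}\langle\langle e_0,e_1\rangle\rangle$ is topologically generated by $e_0$ and $e_1$, its augmentation ideal coincides with the two-sided ideal they generate, so the vanishing of $M_{dR}\xrightarrow{e_0,e_1}M_{dR}\oplus M_{dR}$ is equivalent to every $u\in\mathcal{A}_{4,dR}$ acting on $M_{dR}$ as the scalar $\pi_{dR}(u)$. To propagate this to the Betti side I would use the $\Phi$-comparison isomorphism $c_{\Phi,4}:\mathcal{A}_{4,B}\otimes\mathbf{C}\xrightarrow{\sim}\mathcal{A}_{4,dR}\otimes\mathbf{C}$ together with the compatibility of $\mu_{M,B}$ and $\mu_{M,dR}$ under $c_M$. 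The Betti generators $p_{ab}$ and $c_{ab}$ are group-like, so their images in $\mathcal{A}_{4,dR}\otimes\mathbf{C}$ lie in $1+(e_0,e_1)$; combined with the previous step this yields that every $u\in\mathcal{A}_{4,B}$ acts on $M_B\otimes\mathbf{C}$ as the scalar $\pi_B(u)$. Since $M_B$ is a $\mathbf{Q}$-vector space and $\pi_B$ is defined over $\mathbf{Q}$, faithfully flat descent along $\mathbf{Q}\to\mathbf{C}$ promotes this to the statement that the rational action of $\mathcal{A}_{4,B}$ on $M_B$ factors through $\pi_B$.

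The main obstacle I expect is checking that $\pi$ really is a morphism in $\mathcal{C}$, i.e.\ that $\pi_{dR}$ and $\pi_B$ are compatible under $c_{\Phi,4}$. This rests on the associator relations, which guarantee that $\Phi\in\mathcal{A}_{4,dR}\otimes\mathbf{C}$ has constant term $1$, together with the fact that the small loops $c_{ab}$ correspond to exponentials of the infinitesimal generators $\pi\mathrm{i}\,t_{ij}$ and therefore also have constant term $1$. Once these compatibilities are in place, the descent step is routine and the assembly of $N$ is formal.
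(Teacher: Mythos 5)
Your proposal is correct and follows essentially the same route as the paper: the hypothesis makes the de Rham augmentation ideal act as zero, and the $\Phi$-comparison isomorphism transports this vanishing to the Betti action over $\mathbf{C}$, whence over $\mathbf{Q}$, so the module is pulled back along the augmentation $\pi$. The paper records this as a single commutative square ($\beta=0$ forces $\alpha=0$); your extra steps — group-likeness of the Betti generators, descent from $\mathbf{C}$ to $\mathbf{Q}$, and the assembly of $N$ from a base point — are just an explicit spelling-out of what the paper leaves implicit.
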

\begin{proof}
Let $I=\ker(A^{\Phi}_4 \to \bold Q)$ be the augmentation ideal.
Then we have the following commutative diagram whose vertical
arrows come from comparison maps and are isomorphisms.
$$
\begin{matrix}
I_{B,ab}\otimes M_{B,a}\otimes \bold C & \xrightarrow{\alpha} 
& M_{B,b}\otimes \bold C \\
\downarrow & & \downarrow \\
I_{dR,ab}\otimes M_{dR,a}\otimes \bold C & \xrightarrow{\beta}
 & M_{dR,b}\otimes \bold C \\
\end{matrix}
$$
Since $\beta$ is the zero map, $\alpha$ is the zero map.
Therefore $M$ is induced from an object in $\Cal C$.
\end{proof}

\subsection{Comparison map and actions}

\subsubsection{de Rham framing}
Let $M$ be an $\Cal A_4^{\Phi}[[\alpha_i]]$-module.
and $c_M:M_{B}\to M_{dR}$ be the comparison map of $M$.
\begin{definition}
\begin{enumerate}
\item
Let $y \in T_4$. A de Rham framing of $M$
is a pair of 
homomorphisms 
$\alpha:\bold Q[[\alpha_i]] \to M_{B,y}$ and
$\beta:M_{dR} \to \bold Q[[\alpha_i]]$
of $\bold Q[[\alpha_i]]$-modules.
\item
Let $f=(\alpha,\beta)$ be a framing of $M$ at $y$,
and $\gamma$ be an element in $\Cal A_{4,B,yz}^{\Phi}$
the value $f(\gamma)$ of $f$ at $\gamma$ is defined by
$$
\beta\circ c_M\circ \gamma\circ \alpha \in \bold Q[[\alpha_i]].
$$
\end{enumerate}
\end{definition}

Let $f=(\alpha,\beta)$ be a framing of $M$ at $\overline{01}$.
The $dR$-part $M_{dR}$ of $M$ is a $\Cal A_{4,dR}^{\Phi}
\simeq \bold C\<\<e_0,e_1\>\>$ module.
Let $E_0, E_1$ be actions 
of $e_0$ and $e_1$ on $M_{dR}$.
The action of $\varphi=\varphi(e_0,e_1)\in \bold C\<\<e_0,e_1\>\>$
on $M_{dR}$ is denoted by $\varphi(E_0,E_1)$.
Since the actions of 
$\Cal A^{\Phi}_{4,B}$ and
$\Cal A^{\Phi}_{4,dR}$ on $M_{B}$ and $M_{dR}$
are
compatible via the comparison map, using the associator $\Phi$, we have
$$
f([0,1])=\beta 
c_{M}[0,1] \alpha =
\beta c_{\Cal A^{\Phi}_4}([0,1])c_M\alpha
=\beta \Phi(E_0,E_1)c_M\alpha\in \bold Q[[\alpha_i]].
$$

\subsubsection{Example 1}

We consider a module $M_{dR}=\bold Q[[a]]^{\oplus 2}$.
Let $P_0,P_1$ be endomorphisms defined as
(\ref{connection matrix 2xn}).
Therefore it defines a $\Cal A_{4,dR}$ module structure on $M_{dR}$.
The action of $\varphi$ of (\ref{typical element})
 is given by (\ref{2,..2 generating matrix}).

\subsubsection{Example 2}
We consider a module $M_{dR}=\bold Q[[a,b]]^{\oplus 2}$.
Let $P_0,P_1$ be endomorphisms defined by
\begin{equation}
\label{connection inducing beta function}
P_0=\left(
\begin{matrix}
0 & 0 \\
0 & -c
\end{matrix}\right),
P_1=
\left(\begin{matrix}
0 & 0 \\
-b & c-b
\end{matrix}
\right).
\end{equation}
of $M_{dR}$.
Then it defines a $\Cal A_{4,dR}$ module structure on $M_{dR}$.
For $I=(i_1,\dots, i_n) \in\{0,1\}^n$, we have
$$
(0,1)P_I
\left(
\begin{matrix}
1 \\ 0
\end{matrix}
\right)=
\begin{cases}
0 \text{ if }i_n=0 \\
(-b)(-c)^p(c-b)^q \text{ if }i_n=1 \text{ where } 
p=\#\{i_k=0\}-1,q=\#\{i_k=1\},\\
\end{cases}
$$
\begin{proposition}
\label{abelianization of end with index 1}
Let 
$$
\varphi(e_0,e_1)=1+\varphi_0(e_0,e_1)e_0+\varphi_1(e_0,e_1)e_1
$$ 
be an elenemt $\bold C\<\<e_0,e_1\>\>$.
Then we have
$$
(0,1)\varphi(P_0,P_1)
\left(
\begin{matrix}
1 \\ 0
\end{matrix}
\right)=(-b)\varphi_1^{ab}(-c,c-b) \in \bold C[[b,c]].
$$
where $\varphi_0^{ab}(-b,c-b)$ is the 
image under the ablianization map
$\bold C\<\<e_0,e_1\>\>\to\bold C[[b,c]]$.
\end{proposition}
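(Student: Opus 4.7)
The plan is to exploit the fact that the second coordinate subspace is invariant under $P_0$ and $P_1$, and that on it the two matrices act as commuting scalars, so that $\varphi_1(P_0,P_1)$ collapses to its abelianization.

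First, I would compute the two basic actions directly from (\ref{connection inducing beta function}):
\[
P_0\begin{pmatrix}1\\0\end{pmatrix}=\begin{pmatrix}0\\0\end{pmatrix},\qquad
P_1\begin{pmatrix}1\\0\end{pmatrix}=\begin{pmatrix}0\\-b\end{pmatrix}.
\]
Using the decomposition $\varphi(e_0,e_1)=1+\varphi_0(e_0,e_1)e_0+\varphi_1(e_0,e_1)e_1$ (which groups words by their last letter), this immediately gives
\[
\varphi(P_0,P_1)\begin{pmatrix}1\\0\end{pmatrix}
=\begin{pmatrix}1\\0\end{pmatrix}+\varphi_1(P_0,P_1)\begin{pmatrix}0\\-b\end{pmatrix},
\]
since the contribution from the $\varphi_0\,e_0$ term is killed by $P_0\binom{1}{0}=0$.

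Next, I would isolate the key structural observation: the line $V=\{(0,y)^{t}\}$ is stable under both $P_0$ and $P_1$, and the restrictions act by scalars,
\[
P_0|_V = -c,\qquad P_1|_V = c-b.
\]
Because these scalars commute, the noncommutative substitution of $P_0,P_1$ into any word in $e_0,e_1$, when restricted to $V$, depends only on the multiset of letters used. Concretely, for any monomial $e_{i_1}\cdots e_{i_k}$ in $\varphi_1$ the operator $P_{i_1}\cdots P_{i_k}$ acts on $V$ as $(-c)^{\#\{i_j=0\}}(c-b)^{\#\{i_j=1\}}$, which is precisely the abelianization map $\bold C\<\<e_0,e_1\>\>\to\bold C[[b,c]]$ evaluated at $e_0\mapsto -c$, $e_1\mapsto c-b$. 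Extending linearly, $\varphi_1(P_0,P_1)$ acts on $V$ as the scalar $\varphi_1^{ab}(-c,c-b)$.

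Combining these facts and applying the row $(0,1)$ (which annihilates $\binom{1}{0}$ and extracts the scalar from vectors in $V$) gives
\[
(0,1)\,\varphi(P_0,P_1)\begin{pmatrix}1\\0\end{pmatrix}
=(0,1)\begin{pmatrix}0\\\varphi_1^{ab}(-c,c-b)\cdot(-b)\end{pmatrix}
=(-b)\,\varphi_1^{ab}(-c,c-b),
\]
as claimed. The only subtle point, and the one to state carefully, is the passage from the noncommutative substitution $\varphi_1(P_0,P_1)$ to the commutative polynomial $\varphi_1^{ab}(-c,c-b)$; this is legitimate precisely because $V$ is one-dimensional so the restrictions of $P_0$ and $P_1$ commute, and no care is needed about convergence since each monomial is handled separately in the formal power series ring $\bold C[[b,c]]$.
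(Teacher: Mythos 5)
Your proposal is correct and follows essentially the same route as the paper, which justifies the proposition by the displayed word-level formula for $(0,1)P_I\left(\begin{smallmatrix}1\\0\end{smallmatrix}\right)$ immediately before it: words ending in $e_0$ die because $P_0\left(\begin{smallmatrix}1\\0\end{smallmatrix}\right)=0$, while a word ending in $e_1$ produces $-b$ times the scalars $-c,\,c-b$ for its remaining letters, which is exactly your invariant-line argument. Your formulation via the $P_0,P_1$-stable line on which both matrices act by commuting scalars is a clean way to state why the noncommutative substitution collapses to the abelianization, and it also gives the exponents correctly (the paper's displayed formula has a small typo, placing the $-1$ on the count of $0$'s rather than of $1$'s).
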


\section{Higher direct images for $\Cal A^{\Phi}$-modules}
In this section, we define relative cohomologies and study their
properties.

\subsection{Relative cohomology}
In this section, $\Cal A_n$ is $\Cal A_n^{\Phi}, \Cal A_{n,dR}^{\Phi}$ or
$\Cal A_{n,B}^{\Phi}$.
Let $4\leq m<n$ and $M$ be an $\Cal A_{n}$-module.
Let $f:\Cal M_{n}\to \Cal M_{m}$ be a map defined by
$$
(x_1, \dots, x_{n-3}) \mapsto (x_1, \dots, x_{m-3})
$$
and $f:\Cal A_{n}\to \Cal A_{m}$ be the induced
morphism of algebroid objects in $\Cal C$.
We define a complex $F(\Cal A_n)$ by
$$
\begin{matrix}
\dots \to & \Cal A_n\otimes \Cal A_n \otimes\Cal A_n
& \to & \Cal A_n\otimes \Cal A_n & \to  0 \\
& 
x\otimes y\otimes z &\mapsto&
xy\otimes z 
-x\otimes yz 
\end{matrix}
$$
Then the map $\Cal A_n\otimes \Cal A_n \to \Cal A_n$
defined by $x\otimes y \mapsto xy$ defines a free
$(\Cal A_n\otimes \Cal A_n^0)$ resolution
$F(\Cal A_n) \to \Cal A_n$. Therefore
$F(\Cal A_n)\otimes_{\Cal A_n} \Cal A_m$
is a free $\Cal A_n$-resolution of $\Cal A_m$.
We note the relation
$$
Hom_{\pi_1(\Cal M_n)}(\Cal A_{m,B},M_B)=M_B^{N},
$$
whrer $N=\ker(\pi_1(\Cal M_n)\to \pi_1(\Cal M_m))$.
Motivated by the above relation, we define 
$\bold R f_*M$ by $Hom_{\Cal A_n}(F(\Cal A_n)\otimes_{\Cal A_n} \Cal A_m,\Cal M) $.
More concretely, we have
\begin{align}
\label{definition of higher direct image}
\bold R f_*M:
Hom(\Cal A_m,\Cal M) &\xrightarrow{d^0}
Hom(\Cal A_n\otimes \Cal A_m,\Cal M) \\
\nonumber
&\xrightarrow{d^1}
Hom(\Cal A_n\otimes \Cal A_n\otimes \Cal A_m,\Cal M)\xrightarrow{d^2} \dots.
\end{align}
Here $d^0$ is given by $d^0(\varphi)(x\otimes y)=x\varphi(y)-\varphi(f(x)y)$.
The right $\Cal A_m$ acion on
$F(\Cal A_n)\otimes_{\Cal A_n} \Cal A_m$
induces a left $\Cal A_{m}$-module structure on $\bold R f_*M$.
As a consequence, we have a left $\Cal A_m$ module
$\bold R^if_*M=H^i(\bold R f_*M)$.
If $M=(M_{dR},M_{B},c_M)$ is an $\Cal A_n^{\Phi}$-module, then
$$
(\bold R^if_*M)_{dR}=
\bold R^if_*(M_{dR}),\quad
(\bold R^if_*M)_{B}=
\bold R^if_*(M_{B})
$$
If $f:\Cal M_{n}\to pt=\Cal M_{3}$,
$\bold R^if_*M$ is denoted by $H^i_{\Phi}(\Cal M_{n},M)$

\subsection{Hochschild-Serre-Leray spactral sequence}
Let $4\leq l<m<n$ be natural numbers and
$\Cal M_{n} \xrightarrow{g} \Cal M_{m}\xrightarrow{f}\Cal M_{l}$
be a map defined by 
$(x_1, \dots, x_{n-3})\mapsto(x_1, \dots, x_{m-3})\mapsto(x_1, \dots, x_{l-3})$
\begin{proposition}
The homomorphism
$$
F(\Cal A_n)\otimes_{\Cal A_n}(F(\Cal A_m)\otimes_{\Cal A_m}\Cal A_l)
\to F(\Cal A_n)\otimes_{\Cal A_n}\Cal A_l$$
induces a quasi-isomorphism
$$
\bold R(fg)_*M 
\xrightarrow{\sim}
\bold Rf_*(\bold Rg_*M).
$$
\end{proposition}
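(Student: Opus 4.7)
The plan is to exhibit the displayed map of complexes of $\Cal A_n$-modules
$$
F(\Cal A_n)\otimes_{\Cal A_n}(F(\Cal A_m)\otimes_{\Cal A_m}\Cal A_l) \longrightarrow F(\Cal A_n)\otimes_{\Cal A_n}\Cal A_l
$$
as a quasi-isomorphism between two $\Cal A_n$-free resolutions of $\Cal A_l$, and then to identify the result of applying $Hom_{\Cal A_n}(-,M)$ to this map with the comparison $\bold R(fg)_*M \to \bold Rf_*(\bold Rg_*M)$ via tensor-Hom adjunction.

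First I would record the general bar-resolution fact already implicit in the construction of $\bold Rf_*M$: for any algebroid $\Cal A_?$ in $\Cal C$ and any left $\Cal A_?$-module $N$, the natural augmentation $F(\Cal A_?)\otimes_{\Cal A_?} N \to N$ is a left $\Cal A_?$-free resolution of $N$, because $F(\Cal A_?)$ is an $(\Cal A_?\otimes \Cal A_?^{0})$-free resolution of $\Cal A_?$ and the right copy of $\Cal A_?$ in each $F^p(\Cal A_?)$ makes the functor $F(\Cal A_?)\otimes_{\Cal A_?}(-)$ exact. Applied with $\Cal A_?=\Cal A_m$ and $N=\Cal A_l$, this gives that $F(\Cal A_m)\otimes_{\Cal A_m}\Cal A_l$ is an $\Cal A_m$-free, hence $\Cal A_n$-free (via $f$), resolution of $\Cal A_l$.

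Next I would apply a standard double-complex argument to the source of the displayed map. Since each term $F^p(\Cal A_n)$ is right $\Cal A_n$-free, the functor $F^p(\Cal A_n)\otimes_{\Cal A_n}(-)$ preserves the quasi-isomorphism $F(\Cal A_m)\otimes_{\Cal A_m}\Cal A_l \xrightarrow{\sim} \Cal A_l$; each column of $F(\Cal A_n)\otimes_{\Cal A_n}(F(\Cal A_m)\otimes_{\Cal A_m}\Cal A_l)$ is therefore exact except in degree zero, where it equals $F^p(\Cal A_n)\otimes_{\Cal A_n}\Cal A_l$. Hence the total complex computes the same homology as $F(\Cal A_n)\otimes_{\Cal A_n}\Cal A_l$, which resolves $\Cal A_l$. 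The displayed map is precisely the column-collapse, and it is a quasi-isomorphism between $\Cal A_n$-free resolutions of $\Cal A_l$, so it is a chain homotopy equivalence.

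Applying $Hom_{\Cal A_n}(-,M)$ to a homotopy equivalence yields a quasi-isomorphism. The target is $\bold R(fg)_*M$ by definition, so it remains to recognise the source as $\bold Rf_*(\bold Rg_*M)$. I would use the tensor-Hom adjunction
$$
Hom_{\Cal A_n}\bigl(F(\Cal A_n)\otimes_{\Cal A_n}(F(\Cal A_m)\otimes_{\Cal A_m}\Cal A_l),\, M\bigr)\simeq Hom_{\Cal A_n}\bigl(F(\Cal A_n)\otimes_{\Cal A_n}\Cal A_m,\, Hom_{\Cal A_m}(F(\Cal A_m)\otimes_{\Cal A_m}\Cal A_l,M)\bigr),
$$
whose right side is exactly $\bold Rf_*$ applied to the complex computing $\bold Rg_*M$. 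The main obstacle is the careful bookkeeping in the algebroid setting: tensor products $\otimes_{\Cal A_?}$ are composition indexed by the tangential base sets $T_n,T_m,T_l$, and both the freeness needed for the column-exactness step and the adjunction above have to be verified pointwise on these indices. Moreover the whole argument must be carried out compatibly in the $dR$ and $B$ realisations so that the resulting quasi-isomorphism lives in the category of $\Cal A_l^{\Phi}$-modules; this is automatic from functoriality in $\Cal C$ but has to be recorded explicitly.
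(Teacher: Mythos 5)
The paper states this proposition with no proof at all, so there is no argument of the author's to compare yours against; judged on its own terms, your route is the natural one and is essentially complete. The three ingredients are all correct: the augmentation $F(\Cal A_m)\otimes_{\Cal A_m}\Cal A_l\to \Cal A_l$ is a resolution by left modules that are free over $\Cal A_n$ (via $g$); tensoring columnwise with the right-free terms $F^p(\Cal A_n)$ and collapsing the columns exhibits the displayed map as a quasi-isomorphism between two bounded-below complexes of free left $\Cal A_n$-modules resolving $\Cal A_l$, hence a homotopy equivalence; and applying $Hom_{\Cal A_n}(-,M)$ preserves this. One small slip of orientation: since $Hom$ is contravariant, $\bold R(fg)_*M=Hom_{\Cal A_n}(F(\Cal A_n)\otimes_{\Cal A_n}\Cal A_l,M)$ is the \emph{source} of the induced map, not its target, which in fact matches the arrow in the statement.

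The one point you must repair is the displayed adjunction: as written, its inner term $Hom_{\Cal A_m}(F(\Cal A_m)\otimes_{\Cal A_m}\Cal A_l,M)$ does not typecheck, because $M$ is only an $\Cal A_n$-module, and the two $Hom$'s are nested the wrong way around. The correct form goes through the $(\Cal A_n,\Cal A_m)$-bimodule $F(\Cal A_n)\otimes_{\Cal A_n}\Cal A_m$:
$$
Hom_{\Cal A_n}\bigl((F(\Cal A_n)\otimes_{\Cal A_n}\Cal A_m)\otimes_{\Cal A_m}(F(\Cal A_m)\otimes_{\Cal A_m}\Cal A_l),\,M\bigr)\simeq
Hom_{\Cal A_m}\bigl(F(\Cal A_m)\otimes_{\Cal A_m}\Cal A_l,\,Hom_{\Cal A_n}(F(\Cal A_n)\otimes_{\Cal A_n}\Cal A_m,M)\bigr),
$$
combined with the canonical isomorphism $(F(\Cal A_n)\otimes_{\Cal A_n}\Cal A_m)\otimes_{\Cal A_m}X\simeq F(\Cal A_n)\otimes_{\Cal A_n}X$ for a left $\Cal A_m$-module $X$, applied to $X=F(\Cal A_m)\otimes_{\Cal A_m}\Cal A_l$. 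The right-hand side is then literally $Hom_{\Cal A_m}(F(\Cal A_m)\otimes_{\Cal A_m}\Cal A_l,\bold Rg_*M)=\bold Rf_*(\bold Rg_*M)$, which is what your prose intends. With that correction, together with the pointwise bookkeeping over $T_n,T_m,T_l$ and the compatibility of the $dR$ and $B$ realizations that you already flag, the argument is sound.
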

\subsection{Fundamental algebroid of fibers and higher direct image}
\subsubsection{Fibers of higher direct images}
We give a method to compute the higher direct image for 
$f:\Cal M_{n} \to 
\Cal M_{m}$ for $dR$ and $B$. 
Let $f:T_n\to T_m$ be the  corresopnding map for infinitesimal points,
and $y$ an element of $T_m$.
We set $T_{n,m}(y)=f^{-1}(y)$.
\begin{definition}
Let $\Cal A_{n,m,B,y}$ (resp. $\Cal A_{n,m,dR,y}$) be
the subalgebroid of $\Cal A_{n,B}$, (resp. $\Cal A_{n,dR}$)
generated by the images of $\Cal A_{4,B}$
induced by infinitesimal inclusions of $\Cal M_{4}\to \Cal M_{n}$ contained in
the fiber of $y$.
Then the image of $\Cal A_{n,m,B,y}\otimes \bold C$ is equal to
 $\Cal A_{n,m,dR,y}\otimes \bold C$.
Therefore $\Cal A_{n,m,B,y}$ and $\Cal A_{n,m,dR,y}$
defines a Hopf algebroid object in $\Cal C$ on $T_{n,m}(y)$,
which is denoted by $\Cal A_{n,m,y}$
For $x\in T_{n,m}(y)$, $\Cal A_{n,m,y,x}$ is denoted by
$\Cal A_{n,m,x}$.
\end{definition}
\begin{remark}
The $B$-part $\Cal A_{n,m,B}$ can be interpreted as follows.
Let $N_{n,m}$ be the kernel of $\pi_1^B(\Cal M_{n}) \to \pi_1^B(\Cal M_{m})$.
Then $N_{n,m}$ becomes a fibered groupoid over the map $T_n\to T_m$.
We can easily see that $\Cal A_{n,m}$ is the nilpotent completion of $N_{n,m}$.
\end{remark}
\begin{proposition}
We choose $x\in T_n,y\in T_m$ such that $f(x)=y$. 
We have the following exact sequence:
$$
0 \ot \Cal A_{m,y} \ot \Cal A_{n,x} 
\xleftarrow{d_0} \Cal A_{n,x}\otimes  \Cal A_{n,m,x}
\xleftarrow{d_1}\ \Cal A_{n,x}\otimes  \Cal A_{n,m,x}\otimes  \Cal A_{n,m,x}
\ot \cdots
$$
Here 
$d_0(x\otimes y)=xy -x\epsilon(y)$,
$d_1(x\otimes y\otimes z)=xy\otimes z -x\otimes yz +x\otimes y\epsilon(z)$,
\dots, where $\epsilon:\Cal A_{n,m,x}\to \bold Q$ is the augmentation. 
This becomes a  free $\Cal A_{n,x}$ resolution of $\Cal A_{m,y}$.
\end{proposition}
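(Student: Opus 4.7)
The plan is to recognize the displayed complex as the two-sided bar complex $B_\bullet(\mathcal{A}_{n,x},\mathcal{A}_{n,m,x},\mathbb{Q})$ computing $\mathcal{A}_{n,x}\otimes^{L}_{\mathcal{A}_{n,m,x}}\mathbb{Q}$, to identify its zeroth homology with $\mathcal{A}_{m,y}$, and then to establish freeness of $\mathcal{A}_{n,x}$ as a right $\mathcal{A}_{n,m,x}$-module so that the derived tensor product collapses to the ordinary one and the complex becomes exact in every positive degree.

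The identification of the differentials is direct. Treating the augmentation $\epsilon:\mathcal{A}_{n,m,x}\to\mathbb{Q}$ as an invisible rightmost slot, the term $\mathcal{A}_{n,x}\otimes \mathcal{A}_{n,m,x}^{\otimes k}$ is $\mathcal{A}_{n,x}\otimes \mathcal{A}_{n,m,x}^{\otimes k}\otimes \mathbb{Q}$, and the alternating-sum face maps of the standard two-sided bar construction collapse exactly to the $d_i$ stated in the proposition; in particular the last face multiplies the rightmost tensor factor into $\mathbb{Q}$, which is where the $\epsilon(\,\cdot\,)$ term comes from. Freeness of the terms $\mathcal{A}_{n,x}\otimes \mathcal{A}_{n,m,x}^{\otimes k}$ as left $\mathcal{A}_{n,x}$-modules is then automatic from the construction.

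For the cokernel I would argue as follows. By construction $\mathcal{A}_{n,m,x}$ is generated inside $\mathcal{A}_{n,x}$ by elements (infinitesimal braids $t_{ij}$ on the $dR$ side, small circles and adjacency paths on the $B$ side) whose image under the projection $f:\mathcal{A}_{n,x}\to\mathcal{A}_{m,y}$ is trivial; equivalently $f$ annihilates the augmentation ideal $I=\ker\epsilon\subset \mathcal{A}_{n,m,x}$, hence also the left ideal $\mathcal{A}_{n,x}\cdot I$. A PBW-type argument compatible with the tower $\mathcal{M}_n\to\mathcal{M}_m$ gives the reverse inclusion, yielding the canonical isomorphism $\mathcal{A}_{n,x}\otimes_{\mathcal{A}_{n,m,x}}\mathbb{Q}\simeq \mathcal{A}_{m,y}$, i.e.\ exactness at the two rightmost terms.

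The main obstacle will be freeness (equivalently flatness) of $\mathcal{A}_{n,x}$ as a right $\mathcal{A}_{n,m,x}$-module. In the Betti realisation I would use that $N_{n,m}=\ker\bigl(\pi_1^B(\mathcal{M}_n,x)\to\pi_1^B(\mathcal{M}_m,y)\bigr)$ is a normal subgroupoid and that the fibration $\mathcal{M}_n\to\mathcal{M}_m$ admits a set-theoretic section; the resulting direct-product decomposition of $\pi_1^B(\mathcal{M}_n,x)$ as a right $N_{n,m,x}$-set survives nilpotent completion, giving an isomorphism $\mathcal{A}_{n,x,B}\simeq \mathcal{A}_{m,y,B}\widehat{\otimes}\mathcal{A}_{n,m,x,B}$ of right modules. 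On the $dR$ side the analogous splitting follows from the Knizhnik--Zamolodchikov presentation by choosing a PBW-ordering in which fiber generators are placed last. Freeness transports across the $\Phi$-comparison map inside $\mathcal{C}$, and once it is in place, tensoring the exact bar resolution $B_\bullet(\mathcal{A}_{n,m,x},\mathbb{Q})\to\mathbb{Q}$ with $\mathcal{A}_{n,x}$ preserves exactness, exhibiting the displayed complex as the asserted free $\mathcal{A}_{n,x}$-resolution of $\mathcal{A}_{m,y}$.
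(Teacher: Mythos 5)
Your proposal is correct in substance, but it is organized differently from the paper's proof, so a comparison is in order. The paper reduces the whole statement to the Betti realization (the de Rham part then comes for free through the comparison isomorphism after extending scalars), and for a surjection $f\colon G\to H$ of groups with kernel $N$ it proves exactness of the uncompleted complex $0 \ot \mathbf{Q}[H] \ot \mathbf{Q}[G] \ot \mathbf{Q}[G\times N]\ot\cdots$ by exhibiting an explicit contracting homotopy, $\theta_0(h)=s(h)$, $\theta_1(g)=g\otimes g^{-1}s(\bar g)$, $\theta_2(g\otimes n)=g\otimes n\otimes n^{-1}g^{-1}s(\overline{ng})$, built from a set-theoretic section $s\colon H\to G$, and then takes nilpotent completions. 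You instead recognize the complex as the two-sided bar complex $B_\bullet(\Cal A_{n,x},\Cal A_{n,m,x},\mathbf{Q})$, identify $H_0$ with $\Cal A_{m,y}$, and get exactness in positive degrees from freeness of $\Cal A_{n,x}$ as a right $\Cal A_{n,m,x}$-module (coset decomposition on the Betti side, PBW on the de Rham side) via flat base change. The two arguments are cognate --- the coset decomposition $G\simeq s(H)\times N$ underlying your freeness claim is exactly what makes the paper's homotopy formulas work --- but your packaging is more structural: it isolates the one nontrivial input (freeness over the fiber subalgebra), shows the complex computes $\mathrm{Tor}_\bullet^{\Cal A_{n,m,x}}(\Cal A_{n,x},\mathbf{Q})$, and supplies an independent de Rham argument, whereas the paper's single homotopy settles exactness at every spot at once, with no separate identification of the cokernel and no flatness lemma, and avoids any de Rham computation by the reduction to the Betti part. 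One caveat applies equally to both: the passage from the discrete group-algebra statement to the pro-nilpotent completion (your ``survives nilpotent completion'', the paper's ``by taking a nilpotent completion'') requires compatibility of the section, respectively the homotopy, with the augmentation-ideal filtrations and completed tensor products; neither you nor the paper verifies this, so it is a shared looseness rather than a defect specific to your route.
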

\begin{proof}
We reduce the proposition to the $B$-part.
Let $f:G\to H$ be a surjective homomorphism of group and $N$ be the kernel of $f$.
We prove that the sequence
\begin{equation}
\label{compare relative and fiber}
0 \ot \bold Q[H] \ot \bold Q[G] 
\xleftarrow{d_0} \bold Q[G\times N]
\xleftarrow{d_1}\ \bold Q[G\times N^2]
\ot \cdots
\end{equation}
is exact. We choose a set theoretic section $s:H\to G$.
Then
\begin{align*}
&\theta_0:\bold Q[H]\to \bold Q[G]:h\to s(h) \\
&\theta_1:\bold Q[G]\to \bold Q[G\times N]:g\to g\otimes g^{-1}s(g) \\
&\theta_2:\bold Q[G\times N]\to \bold Q[G\times N^2]:
g\otimes n\to g\otimes n\otimes n^{-1}g^{-1}s(ng) \\
&\dots
\end{align*}
gives a null homotopy. 
Therefore the sequence (\ref{compare relative and fiber})
 is an exact sequence.
By taking a nilpotent completion, we have the proposition for the $B$-part.
\end{proof}

\begin{corollary}
The complex
$\bold Rf_*M_{y}$ is quasi-isomorphic to the complex
$Hom_{\Cal A_{n,m,x}}(F(\Cal A_{n,m,x})
\otimes_{\Cal A_{n,m,x}}\bold Q,M_{x}).$
For the $B$-part, the action of 
$\Cal A_m$ on $\bold Rf_*M_{B}$ is given by the monodromy action.
\end{corollary}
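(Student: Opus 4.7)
The plan is to replace the canonical bar resolution $F(\Cal A_n)\otimes_{\Cal A_n}\Cal A_m$ appearing in the definition (\ref{definition of higher direct image}) of $\bold Rf_*M$ by the more economical free $\Cal A_{n,x}$-resolution of $\Cal A_{m,y}$ constructed in the preceding proposition, and then to identify the resulting $\Hom$ complex with the bar complex of the fiber algebroid $\Cal A_{n,m,x}$ with coefficients in $M_x$. For the action statement, the plan is to track where the residual $\Cal A_m$-action on $\bold Rf_*M$ lives under these identifications and recognize it as conjugation on the fiber.

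First I would restrict the defining complex to the fiber over $y$: pointwise at $x\in T_{n,m}(y)$ this gives $\bold Rf_*M_y\cong \bold R\Hom_{\Cal A_{n,x}}(\Cal A_{m,y},M_x)$, where $M_x$ is regarded as a left $\Cal A_{n,x}$-module and $\Cal A_{m,y}$ is regarded as an $\Cal A_{n,x}$-module via the induced homomorphism $f:\Cal A_{n,x}\to\Cal A_{m,y}$. Since this derived $\Hom$ is independent of the choice of free resolution, I would substitute the resolution supplied by the previous proposition, obtaining a complex whose $k$-th term is $\Hom_{\Cal A_{n,x}}(\Cal A_{n,x}\otimes \Cal A_{n,m,x}^{\otimes k},M_x)$. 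By the extension-of-scalars adjunction, taking $\Cal A_{n,x}$-linear maps out of a free left module returns $\Hom(\Cal A_{n,m,x}^{\otimes k},M_x)$, a complex depending only on the fiber data $(\Cal A_{n,m,x},M_x)$.

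Next I would check that, under this adjunction, the differentials $d_0, d_1, \dots$ of the preceding proposition become exactly the bar differentials of the augmented algebroid $\Cal A_{n,m,x}$ acting trivially on $\bold Q$; this identifies the complex with $\Hom_{\Cal A_{n,m,x}}(F(\Cal A_{n,m,x})\otimes_{\Cal A_{n,m,x}}\bold Q,M_x)$, which gives the first assertion. For the monodromy assertion I would pass to the $B$-realization where everything is group-theoretic: choosing a set-theoretic splitting of $\pi_1^B(\Cal M_n)\twoheadrightarrow\pi_1^B(\Cal M_m)$ as in the previous proposition's proof, conjugation of $\pi_1^B(\Cal M_n)$ on its normal subgroupoid $N_{n,m}$ induces a $\pi_1^B(\Cal M_m)$-action on the group cohomology $H^*(N_{n,m},M_B)$; this is by definition the monodromy action, and after nilpotent completion it passes to the claimed $\Cal A_m$-action on $\bold R^if_*M_B$.

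The main obstacle will be the bookkeeping in the differential comparison: the resolution in the previous proposition is written with a specific $d_0, d_1, \dots$ built from the multiplication and the augmentation $\epsilon:\Cal A_{n,m,x}\to\bold Q$, whereas the bar complex has its own standard differential. One must unwind the adjunction termwise and verify that signs and contraction patterns match, and also verify that the residual right $\Cal A_m$-action on $F(\Cal A_n)\otimes_{\Cal A_n}\Cal A_m$ restricts on the fiber to the conjugation action appearing on the $B$-side; everything else is a routine consequence of the fact that two free resolutions of the same module compute the same Ext.
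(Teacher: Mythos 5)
Your argument is correct and follows exactly the route the paper intends: the corollary is deduced from the preceding proposition's free $\Cal A_{n,x}$-resolution of $\Cal A_{m,y}$ built from $\Cal A_{n,m,x}$, together with the independence of the derived $\Hom$ from the choice of resolution, and the identification of $\Cal A_{n,m,B}$ with the nilpotent completion of $N_{n,m}=\ker(\pi_1^B(\Cal M_n)\to\pi_1^B(\Cal M_m))$ for the monodromy statement. Your bookkeeping of the bar differentials and of the residual $\Cal A_m$-action matches what the paper leaves implicit, so no changes are needed.
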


\subsection{Comparison to de Rham cohomologies and chain complexes}
\subsubsection{Comparison to de Rham complexes}
We show that the $B$-part is equal to Gauss-Manin connection
with the coefficient in $M_{dR}$. 
If $m=n-1$, then using the commutation relation, $\Cal A_{n,dR}$ can be written as
the formal power series ring.
$$
\Cal A_{n,dR}=\Cal A_{n-1,dR}\<\<t_{n,1},\dots, t_{n,n-2}\>\>
$$
as a vector space. The multiplication rule is given by the 
commutation relation.
Let $M_{dR}$ be a continous $\Cal A_{n,dR}$-module. Then the action of
$t_{ij}$ gives
a nilpotent endomorphism $E_{ij}$
on $M_{dR}$.
\begin{proposition}
As a vector space $\bold Rf_*M_{dR}$ is quasi-isomorphic to
$$
\bold Rf_*'M_{dR}:M_{dR}
\xrightarrow{\nabla} M_{dR}\otimes \Omega^1_{n/m}
\xrightarrow{\nabla} M_{dR}\otimes \Omega^2_{n/m} \to\dots
$$
$$
\bold Rf_*''M_{dR}:M_{dR}
\xrightarrow{\nabla} M_{dR}\otimes \Omega^{1}_{\Cal M_{0,n}/\Cal M_{0,m}}
\xrightarrow{\nabla} M_{dR}\otimes \Omega^{2}_{\Cal M_{0,n}/\Cal M_{0,m}}
 \to\dots
$$
Here $\Omega^{\bullet}_{n/m}$ is a subcomplex of the relative de Rham complex
$\Omega^{\bullet}_{\Cal M_{0,n}/\Cal M_{0,m}}$ generated by
$\displaystyle\frac{d(x_i-x_j)}{x_i-x_j}$.
As a consequence, 
\begin{enumerate}
\item
if $M$ is finite dimensional, then
$\bold R^if_*M_{dR}$ is also finite dimensional, and
\item
$\bold R^if_*M_{dR}=0$ if $i>n-m$.
\end{enumerate}
\end{proposition}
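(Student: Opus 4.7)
The plan is to leverage the corollary preceding the proposition, which identifies $\bold Rf_*M$ at the fiber over $y$ with the Hochschild-type complex $\Hom_{\Cal A_{n,m,x}}(F(\Cal A_{n,m,x})\otimes_{\Cal A_{n,m,x}}\bold Q, M_x)$. The $dR$-part $\Cal A_{n,m,dR,x}$ is the completed universal enveloping algebra of the Kohno Lie algebra $L_{n,m,x}$ generated by the $t_{ij}$ labelling edges lying in the fiber of $f$, modulo the infinitesimal braid relations. Hence the Hochschild complex above computes the continuous Lie algebra cohomology $H^\bullet(L_{n,m,x},M_{dR})$.

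First, I would pass from the bar resolution to the Chevalley-Eilenberg/Koszul resolution of the trivial module, rewriting the target as $M_{dR}\otimes \Lambda^\bullet L_{n,m,x}^\ast$ with differential $\nabla(m\otimes\xi)=\sum_{ij}E_{ij}(m)\otimes \omega_{ij}\wedge\xi + m\otimes d_{CE}\xi$, where $\omega_{ij}$ denotes the dual basis element to $t_{ij}$ and $d_{CE}$ encodes the Lie bracket. Since $L_{n,m,x}$ is a quadratic Koszul Lie algebra, its Koszul dual $\Lambda^\bullet L_{n,m,x}^\ast$ modulo the relations dual to the infinitesimal braid relations $[t_{ij},t_{ik}+t_{jk}]=0$ is the Orlik-Solomon algebra generated by $\omega_{ij}=\frac{d(x_i-x_j)}{x_i-x_j}$, that is, the relative Arnold subcomplex $\Omega^\bullet_{n/m}$ of the proposition. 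Under this identification the bracket contribution $d_{CE}$ collapses against the Arnold relations and the residual differential becomes exactly the connection $\nabla$ determined by the $E_{ij}$. This yields the first quasi-isomorphism $\bold Rf_*M_{dR}\simeq \bold Rf_*'M_{dR}$.

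Next, I would establish $\bold Rf_*'M_{dR}\simeq \bold Rf_*''M_{dR}$, \emph{i.e.}\ that the inclusion of the relative Arnold subcomplex into the full algebraic relative de Rham complex is a quasi-isomorphism with coefficients in $M_{dR}$. For trivial coefficients this is the classical Arnold-Brieskorn theorem; for the nilpotent coefficients at hand I would factor $f$ as a tower of one-dimensional fibrations $\Cal M_{0,k}\to \Cal M_{0,k-1}$ and use the Hochschild-Serre-Leray proposition just established to reduce to the case $n=m+1$. In that case the fiber is $\bold P^1$ minus the $n-1$ base punctures, and the statement reduces to the assertion that the de Rham cohomology of a punctured affine line with coefficients in a rational connection with nilpotent residues is computed by the subcomplex of logarithmic forms, which one obtains by the standard residue analysis together with a filtration-by-order-of-nilpotency argument. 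The main obstacle will be this twisted Arnold-Brieskorn step: the untwisted proof does not transfer mechanically, and the inductive reduction to the one-variable case via Hochschild-Serre-Leray is what makes it tractable.

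Finally, both numbered consequences follow at once from the second description. The relative Orlik-Solomon algebra $\Omega^\bullet_{n/m}$ is graded-commutative of finite total rank and concentrated in degrees $0,1,\ldots,n-m$; hence $M_{dR}\otimes\Omega^i_{n/m}$ is finite-dimensional in each degree whenever $M_{dR}$ is, giving (1), and is identically zero for $i>n-m$, giving (2).
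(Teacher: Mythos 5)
Your proposal is correct in substance, but it takes a genuinely different route from the paper, whose entire proof is the one-line observation that, since the operators $E_{ij}$ act nilpotently, one may induct on the length of the nilpotent filtration of $M_{dR}$: on the associated graded pieces the action is trivial, and the comparison of $\bold Rf_*'M_{dR}$ with $\bold Rf_*''M_{dR}$ reduces to the untwisted Arnold--Brieskorn theorem for the fiber-type arrangement; the identification of these complexes with $\bold Rf_*M_{dR}$ itself is not argued inside this proof at all, but is deferred to the following discussion, where the reduced bar complex $B_n$ (with $H^i(\overline{B_n})=0$ for $i\neq 0$) and the explicit map $\psi^k$ furnish the quasi-isomorphism onto $M_{dR}\otimes\Omega^{\bullet}_{n/m}$. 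You instead obtain that identification directly, by reading the corollary on fibers as continuous Lie algebra cohomology of the relative Kohno Lie algebra and invoking its Koszulness, so that the Chevalley--Eilenberg complex contracts onto $M_{dR}$ tensored with the Orlik--Solomon (Arnold) algebra with differential $\nabla$; and you prove the twisted comparison of $\Omega^{\bullet}_{n/m}$ with the full relative de Rham complex by factoring $f$ into one-dimensional fibrations and doing a residue analysis, with the nilpotent filtration entering only in the one-variable step. What the paper's route buys is brevity and economy of inputs: everything stays inside the bar-complex formalism already set up, and the only external fact is the classical untwisted Brieskorn theorem. What your route buys is that the first identification becomes self-contained and conceptually transparent, at the price of importing Koszulness of the fiber Lie algebra (a nontrivial theorem of Kohno type) and of having to check, in the tower reduction, that the Arnold subcomplex is compatible with each fibration step (a relative Cohen--Taylor/Kohno decomposition) --- a point you should make explicit, though it is not a genuine gap since the nilpotent filtration reduces it to the known untwisted case. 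Your derivation of consequences (1) and (2) from the finite rank and the degree bound $n-m$ of $\Omega^{\bullet}_{n/m}$ is exactly what the paper intends.
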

\begin{proof}
Since the action of $\<E_{ij}\>$ are nilpotent, 
we can show that $\bold Rf_*'M_{dR}$ and
$\bold Rf_*''M_{dR}$ are quasi-isomorphic
by the induction of the length of nilpotent filtrations.
\end{proof}
To give an explicit quasi-isomorphism, it is convenient to introduce
the bar complex.
Let $\overline{B_{n}}$ be the reduced bar complex of logarithmic bar complex 
$\Omega_n^{\bullet}$. 
Then the topological dual of $\Cal A_n$ is isomorphic
$B_n=H^0(\overline{B_n})\subset\overline{B_n}$ and $H^i(B_n)=0$ for $i\neq 0$.
Then $B_n$ becomes a Hopf algebra and 
the $\Cal A_n$ action on $M_{dR}$ yields a right $B_n$-comodule structure on
$M_{dR}$.
By the definition
(\ref{definition of higher direct image}),
 $\bold Rf_*M_{dR}$ is equal to
$$
0\xrightarrow{d_0} M_{dR}\otimes B_m 
\xrightarrow{d_1} M_{dR}\otimes B_n\otimes B_m \to
M_{dR}\otimes B_n\otimes B_n\otimes B_m \to \dots.
$$
For example $d_0,d_1$ is given by the formula
\begin{align*}
d_0(a\otimes  m)&=\Delta_m(a)\otimes m
-a\otimes  \Delta_M(m) \\
d_1(a\otimes b\otimes m)&=\Delta_m(a)\otimes b \otimes m-a\otimes \Delta(b)\otimes m
+a\otimes b\otimes \Delta_M(m) 
\end{align*}
Here $\Delta_m:B_m \to B_m \otimes B_n$,
$\Delta:B_n \to B_n \otimes B_n$ and 
$\Delta_M:M_{dR} \to B_n \otimes M_{dR}$
are the coproducts.
\begin{proposition}
\begin{enumerate}
\item
Let $\psi^k:M_{dR}\otimes B_n^{\otimes k}\otimes B_m 
\to M_{dR}\otimes \Omega_{n/m}$
be a map defined by
$$
m\otimes a_1\otimes \cdots \otimes a_k \otimes b \mapsto
m\otimes \pi(a_1)\cdots \pi_k(a_1)\epsilon(b),
$$
where $\epsilon:B_m\to \bold Q$ is the augmentation.
Then $\sum_k\psi^k$ is a homomorphism of complex and 
quasi-isomorphism.
\item
The action of $\Cal A_{m,dR}$ on $\bold R^if_*M_{dR}$
is equal to Gauss-Manin connection. 
\end{enumerate}
\end{proposition}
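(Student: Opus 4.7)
The plan is to identify the bar complex side of $\mathbf{R}f_*M_{dR}$ with iterated integrals in the style of Chen, and then extract the de~Rham differential by projecting each bar entry onto its length-one component via $\pi : B_n \to \Omega^1_n$. Concretely, I would proceed in three steps.

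First, I would verify that $\psi = \sum_k \psi^k$ is a morphism of complexes. This is a direct bookkeeping calculation: the differential on the bar side mixes a shuffle term (coming from $\Delta$) and contributions from $\Delta_m$ and $\Delta_M$, while the differential on the relative de Rham side is $\nabla = d + E$, where $E$ encodes the $t_{ij}$-action on $M_{dR}$. Applying $\pi$ kills all bar tensors of length $\geq 2$, which precisely cuts off the shuffle contributions; the remaining terms match the wedge-with-$\pi(a_i)$ and the $E$-part of $\nabla$ after using the augmentation $\epsilon$ to dispose of the trailing $b\in B_m$ factor. The map $\psi^k$ therefore lands in $M_{dR}\otimes\Omega^k_{n/m}$ and commutes with differentials.

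Second, for the quasi-isomorphism I would argue by a filtration. Filter $\Cal A_{n,dR}$ by powers of the augmentation ideal; by the nilpotent-completeness of $M_{dR}$ this induces a complete, exhaustive filtration on both complexes. On the associated graded, the bar side becomes a Koszul-type complex for the free graded Lie algebra generated by $\{t_{ij}\}_{i,j\in\{1,\dots,n\}\setminus\{1,\dots,m\}-{\rm compatible}}$, and $\operatorname{gr}\psi$ becomes the standard Koszul map into $\operatorname{gr}M_{dR}\otimes\Omega^{\bullet}_{n/m}$, which is classically a quasi-isomorphism. A standard spectral sequence comparison then promotes this to a quasi-isomorphism of the full complexes. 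The finite-dimensionality assertion in (1) follows since $\Omega^{\bullet}_{n/m}$ is a bounded complex of finite-rank free modules over an affine base and the $E_{ij}$ are nilpotent, and the vanishing for $i>n-m$ follows since $\Omega^k_{n/m}=0$ for $k>n-m$.

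Third, for part (2) I would trace through the $\Cal A_{m,dR}$-action on both sides. On the bar complex side the action comes from the coproduct $\Delta_m : B_m \to B_m\otimes B_n$, which reflects how loops in $\Cal M_m$ pull back to paths in $\Cal M_n$. On the de Rham side, the Gauss--Manin connection is defined by lifting a base vector field and contracting with $\nabla$. Under $\psi$, the $B_n$-factor that $\Delta_m$ splits off gets projected by $\pi$ to a base differential form, and the coaction $\Delta_M$ on $M_{dR}$ turns into the action of the corresponding $E_{ij}$; together these reconstruct precisely the classical Gauss--Manin formula.

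The principal obstacle is the first part: verifying at the level of bar coproducts that $\psi$ is a map of complexes requires chasing the three coproducts $\Delta$, $\Delta_m$, $\Delta_M$ through the differentials $d^k$ and checking that everything not killed by $\epsilon$ and $\pi$ assembles into $\nabla$ with the correct signs. The quasi-isomorphism itself, once the Koszul picture on the associated graded is set up, is essentially formal; the identification with Gauss--Manin in (2) is then a matter of unwinding definitions.
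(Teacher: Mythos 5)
The paper states this proposition without proof (the only nearby hint is the preceding proposition, proved ``by induction on the length of nilpotent filtrations''), so your proposal must stand on its own; its overall strategy --- check that $\sum_k\psi^k$ is a chain map, then reduce to trivial coefficients by filtering along the nilpotent action and compare graded pieces --- is the natural one and is consistent with the paper's method. But two steps, as written, do not close. First, in the chain-map verification the terms coming from the deconcatenation coproduct $\Delta(a_i)$ are \emph{not} killed by applying $\pi$ to each slot: $\pi\otimes\pi$ retains the $(1,1)$-component of $\Delta(a_i)$, i.e.\ the length-two part of the bar element $a_i$, and its image in $\Omega^2_{n/m}$ is the wedge of its two legs. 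That this contribution vanishes is not a formal consequence of projecting; it is exactly the first integrability (cocycle) condition defining $B_n=H^0(\overline{B_n})$, combined with the fact that the logarithmic generators are closed (which is also why the differential on $M_{dR}\otimes\Omega^{\bullet}_{n/m}$ is purely the $E_{ij}$-part, with no exterior-derivative term). Without invoking this, the bookkeeping in your first step does not balance, and this is the heart of part (1).

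Second, the identification of the associated graded with ``the Koszul complex of the free graded Lie algebra generated by the compatible $t_{ij}$'' is only literally correct when $m=n-1$; for $n-m\geq 2$ the kernel of $\pi_1(\Cal M_n)\to\pi_1(\Cal M_m)$ is an iterated extension of free Lie algebras (fiber type), not free. You should either factor $f$ into one-point forgetful maps and use the Hochschild--Serre--Leray spectral sequence the paper sets up, or appeal to Koszulness/formality of fiber-type arrangement complements; on each graded piece the comparison then becomes the standard bar-versus-Arnold (Orlik--Solomon) quasi-isomorphism with constant coefficients, and the spectral-sequence comparison (legitimate here because the nilpotent filtration is finite in each degree of the relevant modules) promotes it as you say. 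Two smaller points: the finite-dimensionality and the vanishing for $i>n-m$ that you argue belong to the \emph{preceding} proposition, not to this statement; and the terms you call ``shuffle contributions'' are deconcatenation terms --- the shuffle is the product of $B_n$, not the coproduct appearing in the differential. With these repairs, your filtration argument and the definition-unwinding for part (2) give a complete proof.
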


\subsubsection{Comparison to chain complex}
Let $M$ be an $\Cal A^{\Phi}_{n}[[\alpha_i]]$ module
and $M^*=Hom_{\bold Q[[\alpha_i]]}(M,\bold Q[[\alpha_i]]^{\Phi})$.
Then $M_B$ and $M_B^*$ define local systems on $\Cal M_n$.
The homology and the cohomology in the coefficient in 
$M_{B}^*$
and $M_{B}$ is denoted by
$
H_i^B(\Cal M_n,M_B^*)$ and $H^i_B(\Cal M_n,M_B)$,
respecitvely.
We have the natural pairing
$$
H_i^B(\Cal M_n,M_B^*)\otimes H^i_B(\Cal M_n,M_B) \to \bold C[[\alpha_i]]
$$
and via this map we have the following evaluation homomorphism
$$
ev:H_i^B(\Cal M_n,M_B^*)\to H^i_B(\Cal M_n,M_B)^*.
$$
\subsection{$\Phi$-integral}
We have an isomorphism
$$
H^i(\Cal M_n, M)_{dR}\simeq
H^i_{dR}(\Cal M_n, M_{dR})
$$
and
$$
H^i(\Cal M_n, M)_{B}\simeq
H^i_B(\Cal M_n,M_B).
$$
The homology $H_i^B(\Cal M_n,M^*)$ is identified with the homology group
of chains complex with the coefficient in $M^*$.
An element $\sigma$ of the chain complex 
is a linear combination of $[\gamma,f]$ where
$\gamma$ is an $i$-chain in $\Cal M_n$ and $f$ is a section of $M^*$
on $\gamma.$
\begin{definition}[$\Phi$-integral, twisted chain]
\begin{enumerate}
\item
Let $\sigma=[\gamma,f] \in 
H_i^B(\Cal M_n,M_B^*)$
and $\omega\in H^i_{dR}(\Cal M_n, M_{dR})$.
We define a $\Phi$-integral by
$$
\int^{\Phi}_{\gamma}f\omega=
ev(\sigma)(c_H^{-1}(\omega))\in \bold C[[\alpha_i]]
$$
$\Phi$-integral defines a pairing
$$
H_i^B(\Cal M_n,M_B^*) \otimes H^i_{dR}(\Cal M_n, M_{dR})
\to \bold C[[\alpha_i]]
$$
\item
Let $\varphi_i$ ($i=1, \dots, l$) be admissible functions on $\Cal M_n$,
$D$ a domain defined by $0 \leq x_1\leq x_2 \leq \cdots \leq x_{n-3}\leq 1$
for some distinguished coordinates $x_1, \dots, x_{n-3}$.
Asuume that the values of $\varphi_i$ are positive and real on $D$.
The twisted chain on $D$ with the product of positive real
branchs of $\varphi_i^{\alpha_i}$ 
is denoted by
$\prod_{i=1}^l\varphi^{\alpha_i}_{D}$.
\end{enumerate}
\end{definition}

\subsection{Regularization of cycles and specialization of exponents}
$M$ be an $\Cal A_4^{\Phi}$-module and
$M^*$ its dual.
Then 
\begin{align*}
&Mx^{\alpha}(1-x)^{\beta}=
M\otimes x^{\alpha}(1-x)^{\beta}\bold Q((\beta))[[\alpha]] \\
&M^*x^{-\alpha}(1-x)^{-\beta}=
M\otimes x^{-\alpha}(1-x)^{-\beta}\bold Q((\beta))[[\alpha]]
\end{align*}
becomes an $\Cal A_4^{\Phi}((\beta))[[\alpha]]$-module.
Let $v\in M_{B,\overline{01}}^*$ 
be an element invariant under
the action of local monodromy at $\overline{01}$
and $w \in M_{dR}$.
By the standard arguement for the regularization of topological cycles
around $0$ and $1$, we have the following proposition.
\begin{proposition}
The extension $\tilde v(\alpha)$ to 
$[0,1]$ of the element $v \alpha x^{\alpha}(1-x)^{\beta}$
defines a homology class in $H_1(\Cal M_4, M^*x^{\alpha}(1-x)^{\beta})$.
\end{proposition}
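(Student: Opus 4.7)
The plan is to realize $\tilde v(\alpha)$ as a standard Pochhammer-type regularization of the naive twisted chain $v\,x^\alpha(1-x)^\beta$ on $[0,1]$, with the scalar factor $\alpha$ serving to cancel the simple pole produced at the endpoint $x=0$ by that regularization.

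First, I would fix the local picture near each endpoint. Near $x=0$ the local system $M^*x^\alpha(1-x)^\beta$ has monodromy $\sigma_0\cdot e^{2\pi i\alpha}$, where $\sigma_0$ denotes the local monodromy of $M^*$ at $\overline{01}$. Because $v$ is invariant under $\sigma_0$, transporting $v\,x^\alpha(1-x)^\beta$ once anticlockwise around $0$ multiplies it by the scalar $e^{2\pi i\alpha}$ alone, so that the operator $e^{2\pi i\alpha}-1$ alone governs Pochhammer regularization on that side. Near $x=1$, with monodromy $\sigma_1\cdot e^{2\pi i\beta}$, there is no invariance hypothesis available, but the operator $\sigma_1 e^{2\pi i\beta}-1=2\pi i\beta+(\sigma_1-1)+O(\beta^2)$ is invertible in $\bold C((\beta))$ because $\sigma_1-1$ is pro-unipotent; this invertibility lets one regularize at $x=1$ without an extra hypothesis on $v$.

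Next, for small $\epsilon,\epsilon'>0$ set
$$
\tilde v_\epsilon(\alpha)=\bigl(e^{2\pi i\alpha}-1\bigr)^{-1}C^0_\epsilon+[\epsilon,1-\epsilon']+\bigl(\sigma_1 e^{2\pi i\beta}-1\bigr)^{-1}C^1_{\epsilon'},
$$
where $C^0_\epsilon,C^1_{\epsilon'}$ are small anticlockwise circles around $0$ and $1$ based at $\epsilon$ and $1-\epsilon'$, each carrying the natural branch of $v\,x^\alpha(1-x)^\beta$. A direct boundary computation in the twisted chain complex shows that the two endpoint contributions of the central interval cancel against the loops: for instance at $x=\epsilon$ the jump along $C^0_\epsilon$ is $(e^{2\pi i\alpha}-1)\,v\,x^\alpha(1-x)^\beta|_{x=\epsilon}$, which after dividing by $e^{2\pi i\alpha}-1$ matches exactly the boundary contribution of $[\epsilon,1-\epsilon']$ at $\epsilon$. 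Hence $\partial\tilde v_\epsilon(\alpha)=0$. Multiplying by $\alpha$ produces $\tilde v(\alpha)=\alpha\,\tilde v_\epsilon(\alpha)$; since $\alpha/(e^{2\pi i\alpha}-1)=\tfrac{1}{2\pi i}+O(\alpha)$ is a unit in $\bold C[[\alpha]]$, the contribution from $C^0_\epsilon$ becomes a genuine formal power series in $\alpha$, and the assembled chain is a cycle whose coefficients lie in $M^*x^\alpha(1-x)^\beta\otimes\bold C((\beta))[[\alpha]]$. Independence of $\epsilon,\epsilon'$ is routine, since any deformation produces a difference that bounds in the twisted complex.

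The main obstacle lies in reconciling the formal power series expansions with the geometric construction: the prefactor $\alpha$ in the statement is exactly what is needed to cancel the simple pole of $(e^{2\pi i\alpha}-1)^{-1}$ at $\alpha=0$, and this cancellation works only because the invariance of $v$ under $\sigma_0$ turns the relevant operator into the scalar $e^{2\pi i\alpha}-1$; otherwise one would face $\sigma_0 e^{2\pi i\alpha}-1$, which is not invertible in $\bold C[[\alpha]]$ but only after inverting $\alpha$. Once this normalization is pinned down, the verification reduces to the classical Pochhammer-cycle construction for hypergeometric integrals.
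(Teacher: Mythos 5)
Your proposal is correct and follows essentially the same route as the paper: you regularize the open interval by adding small loops at the endpoints, use the invariance of $v$ under the local monodromy at $\overline{01}$ to reduce the operator there to the scalar $\bold e(\alpha)-1$ so that the prefactor $\alpha$ yields a unit $\alpha/(\bold e(\alpha)-1)\in\bold C[[\alpha]]$, and invoke invertibility of $\rho_1-1$ over $\bold C((\beta))$ (pro-unipotency plus $\beta$ invertible) to handle the endpoint $x=1$. This is exactly the paper's argument, with the boundary cancellations spelled out in more detail.
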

\begin{proof}
The action of the local monodromy $\rho_0$ at $0$ on $vx^{\alpha}(1-x)^{\beta}$
is given by
$$
vx^{\alpha}(1-x)^{\beta} \to \bold e(\alpha) vx^{\alpha}(1-x)^{\beta}.
$$
Therefore
$$
\alpha vx^{\alpha}(1-x)^{\beta}
=\frac{\alpha}{\bold e(\alpha)-1}(\rho_0-1)vx^{\alpha}(1-x)^{\beta}
=\frac{\alpha}{\bold e(\alpha)-1}\partial(vx^{\alpha}(1-x)^{\beta}_{c_0})
$$
where $c_0$ is the small circle around $0$.
The action of $\rho_1-1$ is invertible where $\rho_1$ is the local monodromy.
Therefore $v \alpha x^{\alpha}(1-x)^{\beta}$
defines a homology class in $H_1(\Cal M_4, M^*x^{\alpha}(1-x)^{\beta})$.
\end{proof}
\begin{definition}
The extension of $\alpha vx^{\alpha}(1-x)^\beta$ is called the regularized cycle.
\end{definition}
As a consequence of the above proposition, we have a pairing
$$
F(\alpha)=(\tilde v(\alpha),\omega\frac{dx}{x})\in \bold C((\beta))
[[\alpha]].
$$
The specialization $\tilde v(0)$ of $\tilde v(\alpha)$ defines an element in
$H_1(\Cal M_4,M^*(1-x)^\beta)$.
The specialization $F(0)\in \bold Q((\beta,\gamma))$ is equal to
the pairing $(\tilde v(0),\omega\frac{dx}{x})$ 
between 
$H_1(\Cal M_4,M^*(1-x)^\beta)$ and 
$H^1_{dR}(\Cal M_4,M(1-x)^{-\beta})$.
Since the monodromy action on $\tilde v(0)$ around $0$ is trivial, we have
the following proposition. 
\begin{proposition}
\label{limit tends to delta function}
$\tilde v(0)$ is contained in the image of the map
$$
H_1(\Delta_0^*,M^*(1-x)^{\beta}) \to
H_1(\Cal M_4,M^*(1-x)^{\beta}),
$$
where $\Delta_0^*$ is the small disc around $0$.
As a consequence, we have
$$
(\tilde v(0),\omega\frac{dx}{x})=(v,\omega)
$$
\end{proposition}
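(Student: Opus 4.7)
The plan is to make explicit the regularized cycle $\tilde v(\alpha)$ that is only implicit in the preceding proposition, localize the resulting class at $\alpha = 0$, and finish with a residue calculation. Following the identity $(\rho_0-1)v = (\bold e(\alpha)-1)v$ used in the preceding proof, together with the invertibility of $\rho_1-1$ at $x=1$, one obtains the decomposition
$$
\tilde v(\alpha) = \frac{\alpha}{\bold e(\alpha)-1}\bigl[vx^{\alpha}(1-x)^{\beta}\bigr]_{c_0} + \alpha\bigl[vx^{\alpha}(1-x)^{\beta}\bigr]_{[\epsilon,1-\epsilon]} - \alpha(\rho_1-1)^{-1}\bigl[vx^{\alpha}(1-x)^{\beta}\bigr]_{c_1},
$$
where $c_0$ and $c_1$ are small loops around $0$ and $1$ respectively. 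The boundary contributions at $\epsilon$, $1-\epsilon$, and on $c_0$, $c_1$ cancel in pairs by construction, so the expression above is a genuine $1$-cycle with coefficients in $M^{*}x^{\alpha}(1-x)^{\beta}$ representing the class $\tilde v(\alpha)$.

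Specializing at $\alpha = 0$, the interval contribution and the $c_1$-contribution each carry an explicit factor of $\alpha$ with a coefficient that remains finite, and both vanish in the limit. Only the $c_0$-term survives, since $\alpha/(\bold e(\alpha)-1)$ has a nonzero finite limit. Hence $\tilde v(0)$ is represented by a cycle supported on $c_0$ with section $v(1-x)^{\beta}$, which manifestly lies in the image of $H_1(\Delta_0^{*},M^{*}(1-x)^{\beta})$, giving the first assertion. For the pairing $(\tilde v(0),\omega\frac{dx}{x})$, I would then evaluate the resulting integral over $c_0$ by the residue theorem: since $(1-x)^{\beta}$ and $\omega$ are both regular at $x=0$, the residue of $\langle v(1-x)^{\beta},\omega\rangle\frac{dx}{x}$ at $0$ reduces to $\langle v,\omega\rangle = (v,\omega)$, and the factor $2\pi\bold i$ arising from $\int_{c_0}\frac{dx}{x}$ is exactly cancelled by the surviving limit $\alpha/(\bold e(\alpha)-1)\to 1/(2\pi\bold i)$.

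The principal obstacle is careful bookkeeping of the monodromy normalizations and the orientation of $c_0$, so that the residue factor $2\pi\bold i$ is precisely inverted by the coefficient produced by the regularization at $0$ and no spurious constant survives; the homological localization step itself is then mechanical once the decomposition of $\tilde v(\alpha)$ above is established.
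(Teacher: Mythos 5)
Your proof is correct and follows essentially the same route as the paper: decompose $\tilde v(\alpha)$ into the interval chain plus the monodromy-correction chains at $0$ (and $1$), observe that all terms carrying an explicit factor $\alpha$ die in the limit so only the $c_0$-term survives, and evaluate the pairing by the residue at $x=0$. Your bookkeeping of the factor $\frac{\alpha}{\bold e(\alpha)-1}\to\frac{1}{2\pi\bold i}$ against the $2\pi\bold i$ from $\int_{c_0}\frac{dx}{x}$ is in fact slightly more careful than the paper's own statement of the limit, but it is the same argument.
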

\begin{proof}
$\tilde v(\alpha)$ is sum of $\alpha vx^{\alpha}(1-x)^{\beta}$
and
$\frac{\alpha}{\bold e(\alpha)-1}vx^{\alpha}(1-x)^{\beta}_{c_0}$.
By taking the limit for $\alpha\to 0$, it is tends to
$vx^{\alpha}(1-x)^{\beta}_{c_0}$.
\end{proof}
\begin{remark}
In classical case, the function $\alpha x^{\alpha-1}$
on $[0,1]$ tends to the delta function supported at $0$ when $\alpha$ tends to
zero. The above proposition is reinterpretation of this fact 
using regularization of topological cycles.
\end{remark}
\begin{definition}
\begin{enumerate}
\item
For a section $v$ of $M$ on $[0,1]$, the pairing of $\omega\in H^1_{dR}
(\Cal M_4,Mx^{-\alpha}(1-x)^{-\beta})$ and 
the regularized cycle 
of $vx^{\alpha}(1-x)^{\beta}$ in $H_1^B(\Cal M_4, M^*x^{\alpha}(1-x)^{\beta})$
is denoted by
$$
\int^{\Phi}_{[0,1]}vx^{\alpha}(1-x)^\beta\omega
$$
\item
For a section $v$ of $M$ on $[0,1]$,
the raltive cycle modulo $\{\overline{01} \cup \overline{10}\}$ defined by $v$
is denoted by $v_{(0,1)}$.
For an element
$$
\omega\in H^1_{dR}
(\Cal M_4,Mx^{-\alpha}(1-x)^{-\beta} (\text{mod }
\overline{01}\cup \overline{10}))
$$
the paring of $v_{(0,1)}$ and $\omega$ is dentoed by
$\displaystyle
\int^{\Phi}_{(0,1)}v\omega.
$
\end{enumerate}
\end{definition}
The following proposition is direct from the definition.
\begin{proposition}
\label{difference between limit and regularization}
Let $u_0,u_1$ be elements in $M_{dR}$. Then
\begin{align*}
&\int_{[0,1]}vx^{\alpha}(1-x)^{\beta}(u_0\frac{dx}{x}+u_1\frac{dx}{x-1})-
\int_{(0,1)}vx^{\alpha}(1-x)^{\beta}(u_0\frac{dx}{x}+u_1\frac{dx}{x-1}) \\
=&
((R_0+\alpha I_M)^{-1}
v(\overline{01}),u_0)+
((R_1+\beta I_M)^{-1}
v(\overline{10}),u_1)
\end{align*}
\end{proposition}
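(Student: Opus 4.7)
\emph{Proof plan.} The plan is to isolate the difference as purely local boundary contributions at $x=0$ and $x=1$, and evaluate each by an explicit residue-style computation using the local monodromy action on $M$.

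I would begin by recalling from the proof of Proposition~\ref{limit tends to delta function} (and its analog at $x=1$) that the regularized cycle realizing $\int^{\Phi}_{[0,1]}$ decomposes as the open chain $v x^{\alpha}(1-x)^{\beta}$ on $(0,1)$ together with small-circle corrections $v x^{\alpha}(1-x)^{\beta}_{c_0}$ and $v x^{\alpha}(1-x)^{\beta}_{c_1}$ at the two endpoints, equipped with explicit regularization coefficients such as $\tfrac{\alpha}{\mathbf{e}(\alpha)-1}$ and $\tfrac{\beta}{\mathbf{e}(\beta)-1}$. The chain realizing $\int^{\Phi}_{(0,1)}$ is the same open chain but taken modulo $\overline{01}\cup\overline{10}$. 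Upon pairing against $u_0\tfrac{dx}{x}+u_1\tfrac{dx}{x-1}$, the open-chain parts cancel, so the difference reduces to the pairings of the two small-circle corrections with the form. Since $u_0\tfrac{dx}{x}$ is holomorphic at $1$ and $u_1\tfrac{dx}{x-1}$ is holomorphic at $0$, the two endpoints decouple, and it suffices to evaluate one small-circle integral at $0$, the other being symmetric under $x\mapsto 1-x$, $\alpha\leftrightarrow\beta$, $u_0\leftrightarrow u_1$.

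To evaluate the $c_0$-contribution I would expand $v x^{\alpha}$ in a local flat frame adapted to $R_0+\alpha I_M$, where $R_0$ denotes the endomorphism of $M_{dR}$ induced by the action of $e_0$. The local monodromy $\rho_0$ on this frame is $\exp(2\pi i(R_0+\alpha I_M))$, and integrating $\tfrac{dx}{x}$ around the small circle $c_0$ produces the operator $\exp(2\pi i(R_0+\alpha I_M))-1$ times a universal scalar. After multiplying by the regularization coefficient $\tfrac{\alpha}{\mathbf{e}(\alpha)-1}$, the exponential factors cancel and the combined operator telescopes to $(R_0+\alpha I_M)^{-1}$, acting on the fiber value $v(\overline{01})$ and paired with $u_0$. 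This yields the first summand on the right-hand side, and the $c_1$-contribution gives the second summand by the symmetry noted above.

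The main obstacle will be the prefactor bookkeeping in the last step: one must verify that the $2\pi i$-factors produced by the $c_0$-integral and by the exponential monodromy combine with $\tfrac{\alpha}{\mathbf{e}(\alpha)-1}$ to yield precisely the operator $(R_0+\alpha I_M)^{-1}$ with no residual scalar or sign. Once this identity is established (by a formal power series manipulation in $R_0+\alpha I_M$), everything else is a straightforward localization, and the proposition follows.
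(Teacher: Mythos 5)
Your overall strategy is the right one, and it is essentially what the paper means when it dismisses this statement as ``direct from the definition'' (no proof is given in the text): the difference of the two pairings is purely local at the two tangential fibers, the endpoints decouple because $u_0\frac{dx}{x}$ is regular at $1$ and $u_1\frac{dx}{x-1}$ at $0$, and the $x\mapsto 1-x$ symmetry reduces everything to one endpoint. The identification of the open-chain parts of the two pairings also deserves a sentence (the relative pairing is defined through relative (co)homology and the comparison map, not as a naive integral), but that is a matter of exposition.

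The genuine gap is in your evaluation of the $c_0$-contribution, where you mix two incompatible settings. The coefficient $\frac{\alpha}{e^{2\pi i\alpha}-1}$ is taken from the regularization proposition preceding Proposition \ref{limit tends to delta function}; there $v$ is assumed invariant under the local monodromy $\rho_0$ and the cycle carries an extra factor $\alpha$. For such a $v$ the twisted section has \emph{scalar} monodromy $e^{2\pi i\alpha}$, not $\exp\big(2\pi i(R_0+\alpha I_M)\big)$, and $R_0$ enters only through the expansion of the horizontal section in the de Rham frame. For a general section $v$ on $[0,1]$ (which is what the operator $(R_0+\alpha I_M)^{-1}$ in the statement requires), the regularizing coefficient must itself be the operator inverse of (twisted local monodromy $-\,1$), not the scalar $\frac{\alpha}{e^{2\pi i\alpha}-1}$, and the extra $\alpha$ is spurious here. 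Moreover the circle integral is not ``$\exp(2\pi i(R_0+\alpha I_M))-1$ times a universal scalar'': locally the integrand is $\big\langle x^{R_0+\alpha}v(\overline{01}),u_0\big\rangle\frac{dx}{x}$ up to higher order, so the $c_0$-integral equals $(R_0+\alpha I_M)^{-1}\big(\exp(2\pi i(R_0+\alpha I_M))-1\big)\epsilon^{R_0+\alpha}$ applied to the fiber value; the factor $(R_0+\alpha I_M)^{-1}$ comes from the local primitive of $x^{R_0+\alpha}\frac{dx}{x}$, not from a cancellation against your prefactor. With your scalar prefactor the product is $\frac{\alpha}{e^{2\pi i\alpha}-1}(R_0+\alpha I_M)^{-1}\big(e^{2\pi i(R_0+\alpha)}-1\big)$, which is not $(R_0+\alpha I_M)^{-1}$ unless $R_0$ acts trivially on the relevant vector. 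The correct bookkeeping is: the operator-valued regularization coefficient cancels $\big(\exp(2\pi i(R_0+\alpha I_M))-1\big)$, and the tangential base point sets the regularized value $\epsilon^{R_0+\alpha}\mapsto 1$, leaving $\big((R_0+\alpha I_M)^{-1}v(\overline{01}),u_0\big)$. Finally, note that an argument with literal $\epsilon$-circles lives in the analytic realization; for an arbitrary associator the pairing is formal, so the cleaner route is algebraic: near each fiber subtract from $u_0\frac{dx}{x}+u_1\frac{dx}{x-1}$ the exact form of (a cutoff times) the local $\nabla$-primitives $(R_0+\alpha I_M)^{-1}u_0$ and $(R_1+\beta I_M)^{-1}u_1$, which identifies the absolute and relative classes and produces exactly the two correction terms; the local comparison at the tangential points is normalized independently of $\Phi$, which is why the identity holds for every associator.
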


\section{$\Phi$-Beta module and $\Phi$-hypergoemtric modules}
In this section, we define a $\Cal A^{\Phi}_4$-module 
associated
to beta function and hypergeometric functions.
We fix an associator $\Phi$ throughout this section.
\subsection{Beta module and 1-cocycle relation}
\begin{definition}
\begin{enumerate}
\item
We set $\Cal F(\chi)=x^{\alpha}(1-x)^{\beta}\bold Q[[\alpha,\beta]]$.
We define the pre-beta module $\bold B^*_{\Phi}(\alpha,\beta)$ by
$$
\bold B^*_{\Phi}(\alpha,\beta)=
H^1_{\Phi}(\Cal M_{0,4},\Cal F(\chi)).
$$
It is a $\bold Q[[\alpha,\beta]]^{\Phi}$-module in $\Cal C$.
\item
The sub modules
$$
\bold B_{\Phi,B}(\alpha,\beta)=
2\pi \bold i\alpha\cdot\bold B^*_{\Phi,B}(\alpha,\beta)\cap
2\pi \bold i\beta\cdot\bold B^*_{\Phi,B}(\alpha,\beta)\subset
\bold B^*_{\Phi,B}(\alpha,\beta)
$$ 
$$
\bold B_{\Phi,dR}(\alpha,\beta)=
\alpha\cdot\bold B^*_{\Phi,dR}(\alpha,\beta)\cap
\beta\cdot\bold B^*_{\Phi,dR}(\alpha,\beta)\subset
\bold B^*_{\Phi,dR}(\alpha,\beta)
$$ 
defines a sub object in 
$\bold B^*_{\Phi}(\alpha,\beta)$,
which is called the 
Beta module. 
\end{enumerate}
\end{definition}
$\bold B_{\Phi}(\alpha,\beta)_B$ and
(resp. $\bold B_{\Phi}(\alpha,\beta)_{dR}$) is a free
$\bold Q[[\alpha,\beta]]_B$ (resp.
$\bold Q[[\alpha,\beta]]_{dR}$) modules of rank one
generated by $\varphi$ characterized by
$\varphi(x^{\alpha}(1-x)^{\beta}_{[0,1]})=1$ (resp. $\alpha\frac{dx}{x}$).
We define modified $\Phi$-beta function $B'_{\Phi}(\alpha,\beta)$ by
$$
c_{4}(\varphi)B'_{\Phi}(\alpha,\beta)=\alpha\frac{dx}{x}
$$
In other words,
$$
B'_{\Phi}(\alpha,\beta)=\int^{\Phi}_{[0,1]}
x^{\alpha}(1-x)^{\beta}
\alpha\frac{dx}{x}
$$
Since 
$$
x^{\alpha}(1-x)^{\beta}\big(\alpha\frac{dx}{x}+\beta\frac{dx}{1-x}\big)=0
$$
in $H^1(\Cal M_{4},\Cal F_{\overline{01}*}(\chi))$, 
we have $B_{\Phi}(\alpha,\beta)=B_{\Phi}(\beta,\alpha)$.
\begin{theorem}
\label{one cocycle relation on phi beta}
We have the following one cocycle relation for modified $\Phi$-beta functions.
$$
B'_{\Phi}(\alpha,\gamma+\beta)B'_{\Phi}(\gamma,\beta)
=B'_{\Phi}(\alpha,\gamma)B'_{\Phi}(\alpha+\gamma,\beta)
$$
As a consequence,
$$
B'_{\Phi}(\alpha,\beta)=\exp(\sum_{i\geq 2}a_n(\alpha^n+\beta^n+(-\alpha-\beta)^n))
$$
for some $a_n=a_{n,\Phi}\in \bold C$.
\end{theorem}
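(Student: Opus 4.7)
The plan is to prove the cocycle relation by mimicking the classical Fubini proof of the beta functional equation inside the $\Phi$-cohomological framework developed earlier in the paper. Classically one evaluates $\iint_{0<x_1<x_2<1} x_1^{\alpha-1}(x_2-x_1)^{\gamma-1}(1-x_2)^{\beta-1}\,dx_1\,dx_2$ in two orderings of integration to obtain two distinct decompositions as products of beta functions. I will lift this to an identity of $\Phi$-cohomology classes on $\mathcal{M}_{0,5}$ via the Hochschild--Serre--Leray spectral sequence already established for the two forgetful maps $\mathcal{M}_{0,5}\to\mathcal{M}_{0,4}$.

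On $\mathcal{M}_{0,5}$ with distinguished coordinates $(x_1,x_2)$ (so $p_3=0,\ p_4=1,\ p_5=\infty$), consider the $\mathcal{A}_{5}^{\Phi}[[\alpha,\beta,\gamma]]$-module
$$
M \;=\; x_1^{\alpha}\,(x_2-x_1)^{\gamma}\,(1-x_2)^{\beta}\,\mathbb{Q}[[\alpha,\beta,\gamma]]
$$
built from three admissible functions. I will compute the $\Phi$-integral of the admissible $2$-form $\alpha\tfrac{dx_1}{x_1}\wedge(\alpha+\gamma)\tfrac{dx_2}{x_2}$ against the regularized 2-simplex $\{0<x_1<x_2<1\}$ twisted by $M^{*}$, in two different ways. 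Using the forgetful map $f_1\colon(x_1,x_2)\mapsto x_2$, the admissible change of variable $t=x_1/x_2$ trivializes the restriction of $M$ on a fiber as $x_2^{\alpha+\gamma}\cdot t^{\alpha}(1-t)^{\gamma}$, so the fiberwise higher direct image is the $\Phi$-beta module $\mathbf{B}_{\Phi}(\alpha,\gamma)$ tensored with the rank-one system $x_2^{\alpha+\gamma}(1-x_2)^{\beta}$ on the base; the inner $\Phi$-integral contributes a factor $B'_{\Phi}(\alpha,\gamma)$, and the outer one over $x_2\in(0,1)$ contributes $B'_{\Phi}(\alpha+\gamma,\beta)$. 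Symmetrically, the forgetful map $f_2\colon(x_1,x_2)\mapsto x_1$ combined with the admissible change $u=(x_2-x_1)/(1-x_1)$ makes the fiber restriction $(1-x_1)^{\gamma+\beta}u^{\gamma}(1-u)^{\beta}$, producing inner factor $B'_{\Phi}(\gamma,\beta)$ and outer factor $B'_{\Phi}(\alpha,\gamma+\beta)$. Since both iterated computations evaluate the same class in $H^{2}_{\Phi}(\mathcal{M}_{0,5},M)$ against the same regularized chain, equating them yields the cocycle identity.

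For the exponential form, set $L(\alpha,\beta):=\log B'_{\Phi}(\alpha,\beta)$. The cocycle reads additively as
$$
L(\alpha,\gamma+\beta)+L(\gamma,\beta)\;=\;L(\alpha,\gamma)+L(\alpha+\gamma,\beta).
$$
Taking $\alpha=0$ and using the normalization $B'_{\Phi}(0,\cdot)=1$ gives $L(0,\cdot)\equiv 0$; differentiating the cocycle at $\alpha=0$ and integrating in the first variable then shows $L(\alpha,\beta)=F(\alpha)+F(\beta)-F(\alpha+\beta)$ for a unique $F\in\mathbb{C}[[t]]$ with $F(0)=0$. The $S_3$-symmetry permuting the punctures $\{0,1,\infty\}$ of $\mathbb{P}^{1}\setminus\{0,1,\infty\}$ permutes the three local exponents $(\alpha,\beta,-\alpha-\beta)$ at those punctures and restricts $F$ to be odd, so writing $F(z)=\sum_{n\ge 2}a_n z^{n}$ yields the claimed symmetric form $\exp\sum_n a_n(\alpha^n+\beta^n+(-\alpha-\beta)^n)$.

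The main obstacle is verifying that the two iterated factorizations of the $H^{2}_{\Phi}$-class really do produce the asserted products without spurious scaling factors. This requires tracking the normalization $\alpha\tfrac{dx}{x}$ of the de Rham generator of $\mathbf{B}_{\Phi,dR}$ through the admissible changes of variables, and confirming that the descent and regularization machinery (Proposition~\ref{limit tends to delta function} and its two-variable analogue) is compatible with the Leray spectral sequence for the $\Phi$-comparison map. Related delicate points are the identification of $\mathbf{R}^{1}f_{i*}M$ with a beta module on the base tensored by an explicit rank-one local system, and the consistency of the regularized 2-simplex on $\mathcal{M}_{0,5}$ with the iterated regularized cycles on base and fiber.
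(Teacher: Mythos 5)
Your strategy for the cocycle identity is essentially the paper's: lift to $\mathcal M_{0,5}$, realize a product of two $\Phi$-beta integrals as the $\Phi$-period of a rank-one module of admissible functions against the regularized $2$-cell, and factor through forgetful maps. The paper does this with \emph{one} factorization (Lemma \ref{transformation of Jacobi}) plus the $\alpha\leftrightarrow\beta$ symmetry of the two-dimensional integrand and of $B'_{\Phi}$, while you propose \emph{two} integration orders. The difficulty you defer as ``the main obstacle'' is, however, exactly the missing content, and as written the step fails: your single chosen form $\alpha(\alpha+\gamma)\frac{dx_1}{x_1}\wedge\frac{dx_2}{x_2}$ is precisely the product form for the fibration $t=x_1/x_2$ (inner factor $B'_{\Phi}(\alpha,\gamma)$, outer $B'_{\Phi}(\alpha+\gamma,\beta)$), but in the coordinates $(x_1,u)$ with $u=(x_2-x_1)/(1-x_1)$ one has $x_2=x_1+u(1-x_1)$ and $\frac{dx_1}{x_1}\wedge\frac{dx_2}{x_2}=\frac{1-x_1}{x_1x_2}\,dx_1\wedge du$, which is \emph{not} a product of a form in $x_1$ with a form in $u$; so the second iterated integral does not produce $B'_{\Phi}(\gamma,\beta)\cdot B'_{\Phi}(\alpha,\gamma+\beta)$ directly, and ``equating the two iterated computations'' is not yet an identity. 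To repair this you must show that $\alpha(\alpha+\gamma)\frac{dx_1}{x_1}\wedge\frac{dx_2}{x_2}$ and $\alpha\gamma\frac{dx_1}{x_1}\wedge\frac{d(x_2-x_1)}{x_2-x_1}$ represent the same class in the twisted de Rham cohomology, using the exactness of $x_1^{\alpha}(x_2-x_1)^{\gamma}(1-x_2)^{\beta}\bigl(\alpha\frac{dx_1}{x_1}+\gamma\frac{d(x_2-x_1)}{x_2-x_1}-\beta\frac{dx_2}{1-x_2}\bigr)$ wedged against logarithmic $1$-forms; this is exactly what Lemma \ref{transformation of Jacobi} supplies in the paper, together with the cycle statement $(f_1,f_2)_*(D)=[0,1]\times[0,1]$ which you also assume in your Fubini/Leray step. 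Without this de Rham comparison your argument does not close.

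On the ``consequence'': your reduction to the additive cocycle and the triviality of symmetric $2$-cocycles, giving $\log B'_{\Phi}(\alpha,\beta)=F(\alpha)+F(\beta)-F(\alpha+\beta)$ with $F(z)=\sum_{n\geq 2}a_nz^n$, is sound (and is more than the paper offers, since the paper asserts this part without proof). But your final step, that the $S_3$-action permuting the exponents $(\alpha,\beta,-\alpha-\beta)$ forces $F$ to be odd, is unjustified and is false in the classical case: for the KZ associator $F(z)=\sum_{n\geq2}\frac{(-1)^n\zeta(n)}{n}z^n$ has nonvanishing even coefficients (indeed the paper later identifies $a_n$ with $\zeta_{\Phi}(n)$ for all $n$), and $B'_{\Phi}$ is not invariant under that $S_3$ (already $\log B'(\alpha,\beta)$ contains $-\zeta(2)\alpha\beta$ classically). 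What the cocycle actually yields is $B'_{\Phi}(\alpha,\beta)=\exp\bigl(\sum_{n\geq2}a_n(\alpha^n+\beta^n-(\alpha+\beta)^n)\bigr)$, which agrees with the displayed ``$+(-\alpha-\beta)^n$'' form only in odd degrees and is the version consistent with the paper's subsequent definitions $\Gamma_{\Phi}(x)=\frac1x\exp(\sum_{n\geq2} a_nx^n)$ and $B_{\Phi}(\alpha,\beta)=\Gamma_{\Phi}(\alpha)\Gamma_{\Phi}(\beta)/\Gamma_{\Phi}(\alpha+\beta)$; you should state and prove it in that form rather than introduce an oddness claim to match the (apparently mistyped) display.
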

Before proving the theorem, we define the $\Cal A_5^{\Phi}$ module
$\Cal F$.
We define $\Cal F=(1-x)^{\alpha}y^{\beta}(x-y)^{\gamma}
\bold Q[[\alpha,\beta,\gamma]]$
by taking a coordinate $p=(x,y,0,1,\infty)$ of $\Cal M_{5}$.
We set $D=\{0\leq y \leq x \leq 1,\ x,y\in \bold R\}$.
We consider another coordinates $p=(1,\eta,0,\xi,\infty)$
of $\Cal M_{4}$. Then we have relations 
$\eta=\frac{y}{x}, \xi=\frac{1}{x}$.
We define
homomorphisms $f_1, f_2:\Cal M_{4}$ by $f_1(p)=x, f_2(p)=\eta$
and $(f_1,f_2)$ by the composite
$$
\Cal M_{5}\xrightarrow{\Delta}
\Cal M_{5}\times \Cal M_{5} \xrightarrow{f_1\times f_2}
\Cal M_{4}\times \Cal M_{4}.
$$
Thus we get a homomorphism of algebroid
$$
(f_1,f_2)_*:\Cal A_{5}^{\Phi} \to \Cal A_4^{\Phi}\otimes \Cal A_4^{\Phi}
$$
and its abelianization 
$\Cal A_{5}^{\Phi,ab} \to \Cal A_4^{\Phi,ab}\otimes \Cal A_4^{\Phi,ab}$.
Since
$$
(1-x)^{\alpha}y^{\beta}(x-y)^{\gamma}=
(1-x)^{\alpha}x^{\beta+\gamma}\eta^{\beta}(1-\eta)^{\gamma}
$$
we have
\begin{align*}
\Cal F=
&(f_1,f_2)^*((1-x)^{\alpha}x^{\beta+\gamma}
\eta^{\beta}(1-\eta)^{\gamma}
\bold Q[[\alpha,\beta,\gamma]]) \\
=&(f_1,f_2)^*(
(1-x)^{\alpha}x^{\beta+\gamma}
\bold Q[[\alpha,\beta+\gamma]] \\
&\otimes_{\bold Q[[\beta+\gamma]]}
\eta^{\beta}(1-\eta)^{\gamma}
\bold Q[[\beta,\gamma]]).
\end{align*}
Therefore we have a homomorphism
$$
(f_1,f_2)^*:\bold B_{\Phi}(\alpha,\beta+\gamma)\otimes_{\bold Q[[\beta+\gamma]]}
\bold B_{\Phi}(\beta,\gamma) \to
H^2_{\Phi}(\Cal M_{5},\Cal F).
$$
\begin{lemma}
\label{transformation of Jacobi}
\begin{enumerate}
\item
\begin{align*}
&(f_1,f_2)^*
\big((1-x)^{\alpha}x^{\beta+\gamma}\eta^\beta(1-\eta)^{\gamma}
\frac{(\beta+\gamma)dx}{x}\wedge
\frac{\beta d\eta}{\eta}\big) \\
=&
(1-x)^{\alpha}y^{\beta}(x-y)^\gamma \alpha
\beta \frac{dx}{x-1}\wedge\frac{dy}{y} 
\end{align*}
\item
$(f_1,f_2)_*(D)=[0,1]\times [0,1]$
in $H^2_{dR}(\Cal M_{5},\Cal F_{dR})$.
\end{enumerate}
\end{lemma}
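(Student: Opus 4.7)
The proofs of both parts reduce to tracking the birational change of coordinates $\eta=y/x$, under which $(f_1,f_2)$ realizes $\Cal M_{0,5}$ as an open subset of $\Cal M_{0,4}\times\Cal M_{0,4}$, the complement being the divisor $x\eta=1$ (i.e.\ $y=1$). I would carry out this substitution directly: at the level of forms in part (1), and of chains in part (2).

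\textbf{Part (1).} First I would compute the pullback by the substitution. From $\eta=y/x$ and $1-\eta=(x-y)/x$, the twist factor transforms as
\[
(1-x)^\alpha x^{\beta+\gamma}\eta^\beta(1-\eta)^\gamma \;=\; (1-x)^\alpha y^\beta (x-y)^\gamma,
\]
and the logarithmic form as $\frac{d\eta}{\eta}=\frac{dy}{y}-\frac{dx}{x}$, so $\frac{dx}{x}\wedge\frac{d\eta}{\eta}=\frac{dx}{x}\wedge\frac{dy}{y}$. Hence as a literal 2-form, the pullback equals $\beta(\beta+\gamma)\Phi\,\frac{dx}{x}\wedge\frac{dy}{y}$, where $\Phi=(1-x)^\alpha y^\beta(x-y)^\gamma$. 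To identify this with $\alpha\beta\Phi\,\frac{dx}{x-1}\wedge\frac{dy}{y}$ as a class in $H^2_{dR}(\Cal M_{0,5},\Cal F_{dR})$, I would use the Kunneth decomposition on the ambient product together with a coboundary relation on the $x$-factor alone: setting $\Phi_0(x)=(1-x)^\alpha x^{\beta+\gamma}$, the differential $d\Phi_0=\Phi_0\bigl(\alpha\frac{dx}{x-1}+(\beta+\gamma)\frac{dx}{x}\bigr)$ is exact in the twisted complex, giving the identity $(\beta+\gamma)\Phi_0\frac{dx}{x}\equiv -\alpha\Phi_0\frac{dx}{x-1}$ in $H^1(\Cal M_{0,4},\Phi_0)$. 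Wedging with the class $\beta\Phi_1(\eta)\frac{d\eta}{\eta}$ in the $\eta$-factor and using $\frac{dx}{x-1}\wedge\frac{dy}{y}=\frac{dx}{x-1}\wedge\frac{d\eta}{\eta}$ then delivers the stated cohomological equality.

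\textbf{Part (2) and the main obstacle.} For (2), the map $(x,y)\mapsto(x,y/x)$ carries $D=\{0\leq y\leq x\leq 1\}$ bijectively onto the unit square $[0,1]\times[0,1]$. Matching positive real branches, the canonical branch of $(1-x)^\alpha y^\beta(x-y)^\gamma$ on $D$ pulls back, via $y=x\eta$ and $x-y=x(1-\eta)$, to the product of the positive real branches of $(1-x)^\alpha x^{\beta+\gamma}$ on $[0,1]$ and of $\eta^\beta(1-\eta)^\gamma$ on $[0,1]$, which is precisely the generating twisted cycle of $\bold B_\Phi(\alpha,\beta+\gamma)\otimes\bold B_\Phi(\beta,\gamma)$. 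The main obstacle lies in part (1): making the Kunneth step rigorous across the open inclusion $\Cal M_{0,5}\hookrightarrow\Cal M_{0,4}\times\Cal M_{0,4}$ and tracking signs so that the cohomological coefficient comes out as claimed. Since the coboundary relation is local on the twisted complex, its restriction to the open subset is automatic, and the bookkeeping reduces to the single identity $d\Phi_0\equiv 0$ above together with Kunneth on $\Cal M_{0,4}\times\Cal M_{0,4}$.
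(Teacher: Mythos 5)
Your argument is correct and essentially the paper's: the paper proves (1) in one line from the exactness of $(1-x)^{\alpha}y^{\beta}(x-y)^{\gamma}\bigl(\alpha\frac{dx}{x-1}+\beta\frac{dy}{y}+\gamma\frac{d(x-y)}{x-y}\bigr)$ on $\Cal M_{5}$, and your coboundary relation $d\bigl((1-x)^{\alpha}x^{\beta+\gamma}\bigr)\equiv 0$ in the $x$-factor, wedged with $\beta\frac{d\eta}{\eta}$, is precisely that exact form wedged with $\frac{d\eta}{\eta}=\frac{dy}{y}-\frac{dx}{x}$, so the Kunneth scaffolding is harmless but unnecessary, and your part (2) is the branch-matching the paper leaves implicit. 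Note only that your displayed relation carries a minus sign, so taken literally it yields $-\alpha\beta\,\frac{dx}{x-1}\wedge\frac{dy}{y}=\alpha\beta\,\frac{dx}{1-x}\wedge\frac{dy}{y}$; this orientation/sign looseness is already present in the paper's own statement and one-line proof.
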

\begin{proof}
Since the element
$$
(1-x)^{\alpha}y^{\beta}(x-y)^\gamma 
(\alpha\frac{dx}{x-1}+\beta\frac{dy}{y}+\gamma\frac{d(x-y)}{x-y})
$$
is exact, we have the equality in
$H^2_{dR}(\Cal M_{5},\Cal F_{dR})$.
\end{proof}
\begin{proof}[Proof of Theorem \ref{one cocycle relation on phi beta}]
By Lemma \ref{transformation of Jacobi}, we have
\begin{align*}
&\int_D^{\Phi} (1-x)^{\alpha}y^{\beta}(x-y)^\gamma \alpha
\beta \frac{dx}{x-1}\wedge\frac{dy}{y}  \\
=&
\int^{\Phi}_{[0,1]}(1-x)^{\alpha}x^{\beta+\gamma}
\frac{(\beta+\gamma)dx}{x}
\int^{\Phi}_{[0,1]}
\eta^\beta(1-\eta)^{\gamma}
\frac{\beta d\eta}{\eta} \\
=&
B'_{\Phi}(\alpha,\gamma+\beta)\cdot
B'_{\Phi}(\gamma,\beta)
\end{align*}
Since the first integral is symmentric on $\alpha$ and $\beta$,
we have
$$
B'_{\Phi}(\alpha,\gamma+\beta)\cdot
B'_{\Phi}(\gamma,\beta)
=
B'_{\Phi}(\beta,\gamma+\alpha)\cdot
B'_{\Phi}(\gamma,\alpha).
$$
\end{proof}
\begin{definition}
\begin{enumerate}
\item
We define the Beta function $B(\alpha,\beta)$ by
$$
B_{\Phi}(\alpha,\beta)=
B'_{\Phi}(\alpha,\beta)
\frac
{\alpha+\beta}
{\alpha\beta}
$$
\item
We define $\Gamma_{\Phi}\in \bold C((x))$ by
$$
\Gamma_{\Phi}(x)=\frac{1}{x}\exp(\sum_{i=2}^{\infty}a_nx^n).
$$
Here $a_n\in \bold C$ is defined in Theorem 
\ref{one cocycle relation on phi beta}.
\item
The product $\Gamma_{\Phi}(a_1)\cdots \Gamma_{\Phi}(a_n)$
is denoted by $\Gamma_{\Phi}(a_1, \dots, a_n)$.
\end{enumerate}
\end{definition}
By the definition of $\Phi$-Gamma function and Proposition 
\ref{one cocycle relation on phi beta}, we have
$$
B_{\Phi}(\alpha,\beta)=
\frac
{\Gamma_{\Phi}(\alpha)\Gamma_{\Phi}(\beta)}
{\Gamma_{\Phi}(\alpha+\beta)}
$$

\subsection{Definition of $\Phi$-hypergeometric modules}
In this section, we define a framed $\Cal A_4^{\Phi}$-modules
at $\overline{01}$ associated to generalized hypergeometric functions.
\begin{lemma}
Let $x_1, \dots, x_k$ be the distinguished coordinates of $\Cal M_{k+3}$.
We set $\xi_1=x_1,\xi_2=\frac{x_2}{x_1},\dots,\xi_i=\frac{x_k}{x_{k-1}}$.
Then 
\begin{enumerate}
\item
$\xi_1,\dots, \xi_k$ and
$1-\xi_1,\dots, 1-\xi_k$ and $1-\xi_1\xi_2\cdots \xi_k$
are admissible functions of $\Cal M_{k+3}$.
\item
$
\frac{d\xi_1}{\xi_1},\dots,
\frac{d\xi_k}{\xi_k}$ are admissible one forms.
\end{enumerate}
\end{lemma}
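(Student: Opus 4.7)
The plan is to verify each claim by direct computation using the cross-ratio formulas collected in the remark after the definition of admissible function/form, together with the telescoping $\xi_1\xi_2\cdots \xi_i = x_i$.

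First, I would handle the admissible functions in part (1). For $\xi_1 = x_1$ and $1-\xi_1 = 1-x_1$, admissibility is immediate from the explicit cross-ratio formulas in the remark, namely $x_1 = (x_1-0)(1-\infty)/((1-0)(x_1-\infty))$ (just the defining distinguished coordinate evaluation) and $1-x_1 = (1-x_1)(\infty-0)/((1-0)(\infty-x_1))$. For $i \geq 2$, write $\xi_i = x_i/x_{i-1}$ and apply the first formula in the remark with $(i,j) = (i,i-1)$ to see that $\xi_i$ is the admissible function $\varphi_S$ with $S$ corresponding to the ordered quadruple $(0,\infty,x_{i-1},x_i)$. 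Similarly, $1 - \xi_i = 1 - x_i/x_{i-1}$ is admissible by the second formula in the remark. Finally, $1-\xi_1\xi_2\cdots\xi_k = 1 - x_k$ follows immediately from telescoping and the third formula. So part (1) reduces entirely to reading off the cases of the remark.

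Second, I would handle the one-forms in part (2). For $i=1$, $d\xi_1/\xi_1 = dx_1/x_1$ is one of the basic admissible forms. For $i \geq 2$, a direct computation gives
\[
\frac{d\xi_i}{\xi_i} \;=\; d\log\!\Bigl(\frac{x_i}{x_{i-1}}\Bigr) \;=\; \frac{dx_i}{x_i} - \frac{dx_{i-1}}{x_{i-1}},
\]
which is manifestly a $\mathbf{Q}$-linear combination of forms of type $dx_j/x_j$ and hence admissible by definition.

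There is no real obstacle here; the content of the lemma is purely bookkeeping, and the only step that requires a moment's thought is noticing that $\xi_i = x_i/x_{i-1}$ corresponds to the cross-ratio of the four points $\{0, \infty, p_{i-1}, p_i\}$ on the underlying $\mathbf{P}^1$, which is exactly what the first identity in the remark records. Once this identification is in place, both parts are immediate.
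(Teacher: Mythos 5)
Your proof is correct and is exactly the intended argument: the paper gives no separate proof of this lemma, treating it as an immediate consequence of the preceding remark (admissibility of $x_l$, $x_i/x_j$, $1-x_i/x_j$, $1-x_i$) together with the definition of admissible one-forms, which is precisely what you spell out, including the telescoping identity $\xi_1\xi_2\cdots\xi_k=x_k$ and the computation $\frac{d\xi_i}{\xi_i}=\frac{dx_i}{x_i}-\frac{dx_{i-1}}{x_{i-1}}$. One cosmetic point: with the paper's convention $S=(i,j,k,l)$, $t(p_i)=0$, $t(p_j)=1$, $t(p_k)=\infty$, the quadruple realizing $\xi_i=x_i/x_{i-1}$ is $(0,\,p_{i-1},\,\infty,\,p_i)$ (the point $p_{i-1}$ is sent to $1$, not to $\infty$ as your ordering suggests), but this does not affect the argument since the remark already records $x_i/x_j$ as admissible.
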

We consider $\Cal M_{0,5}$ and $M_{0,6}$ with
the distinguished coordinates $(x_1,t)$ and $(x_1,x_2,t)$.
We define $\pi_5,\pi_6$ by
\begin{align*}
&\pi_5:\Cal M_{0,5}\to \Cal M_{0,4}:(x_1,t)\mapsto t \\
&\pi_6:\Cal M_{0,6}\to \Cal M_{0,4}:(x_1,x_2,t)\mapsto t.
\end{align*}

We define cell's $D_5$ and $D_6$ in 
$\Cal M_{0,5}(\bold R)_{\overline{01}}$ and
$\Cal M_{0,6}(\bold R)_{\overline{01}}$ by
\begin{align*}
D_5&=\{\xi_1\in [0,1], t=\overline{01}\},\\
D_6&=\{\xi_1, \xi_2 \in [0,1]\leq 1, t=\overline{01}\},
\end{align*}
We define $\Cal A_{5}^{\Phi}$ and $\Cal A_{6}^{\Phi}$ modules 
$\Cal F(a_1,a_2;b_1)$ and
$\Cal F(a_1,a_2,a_3;b_1,b_2)$ by
\begin{align*}
\xi_1^{a_1}
(1-\xi_1)^{b_1-a_1}
(1-t\xi_1)^{-a_2}
\bold Q[[a_1,a_2,b_1]].
\end{align*}
and
\begin{align*}
\xi_1^{a_1}
\xi_2^{a_2}
(1-\xi_1)^{b_1-a_1}
(1-\xi_2)^{b_2-a_2}
(1-t\xi_1\xi_2)^{-a_3}
\bold Q[[a_1,a_2,a_3,b_1,b_2]].
\end{align*}

\begin{definition}
We use the notation of cycles $\gamma_1,\gamma_2,\gamma_1^\#,\gamma_2^\#$
defined in (\ref{cycles on M_4}).

\begin{enumerate}
\item
We define hypergeomtric modules 
$HM(a_1,a_2,b_1)$ and
$HM(a_1,a_2,a_3,b_1,b_2)$ on $\Cal A_4^{\Phi}$ by
\begin{align*}
HM(a_1,a_2,b_1)=&\bold R^1\pi_{5*}\Cal F(a_1,a_2,b_1)\otimes \bold B_{\Phi}(a_1,b_1-a_1)^{-1} \\
HM(a_1,a_2,a_3,b_1,b_2)=&\bold R^2\pi_{6*}\Cal F(a_1,a_2,a_3,b_1,b_2) \\
&
\otimes \bold B{\Phi}(a_1,b_1-a_1)^{-1}\otimes \bold B_{\Phi}(a_2,b_2-a_2)^{-1}
\end{align*}
\item
We define hypergeometric function
$$
F_{\Phi}:\Cal A_{4,B,\overline{01}*}\to \bold C[[a_1,a_2,b_1]]
$$
by 
\begin{align*}
&F_{\Phi}(a_1,a_2,b_1+1,\gamma) \\
=&
B_{\Phi}(a_1,b_1-a_1+1)^{-1}
\int_{\gamma(\gamma_1)}^{\Phi}
\xi_1^{a_1}
(1-\xi_1)^{b_1-a_1}
(1-t\xi_1)^{-a_2}
\frac{d\xi_1}{\xi_1}
\end{align*}
for $\gamma \in \Cal A_{4,\overline{01}*}$.
\end{enumerate}
\end{definition}
\begin{proposition}
\label{Phi HGF and associator}
\begin{align*}
\left(
\begin{matrix} 1 & 0
\end{matrix}
\right)
\varphi(P_0,P_1)
\left(
\begin{matrix} 1 \\ 0
\end{matrix}
\right)
=&
B_{\Phi}(a,c-a+1)^{-1}
\int_{\varphi(D_5)}^{\Phi}
\xi_1^{a}
(1-\xi_1)^{c-a}
(1-t\xi_1)^{-b}
\frac{d\xi_1}{\xi_1} \\
\left(
\begin{matrix} 0 & 1
\end{matrix}
\right)
\varphi(P_0,P_1)
\left(
\begin{matrix} 1 \\ 0
\end{matrix}
\right)
=&
B_{\Phi}(a,c-a+1)^{-1} \\
&\frac{b}{a}\int_{\varphi(D_5)}^{\Phi}
t\xi_1^{a+1}
(1-\xi_1)^{c-a}
(1-t\xi_1)^{-b-1}
\frac{d\xi_1}{\xi_1}
\end{align*}
\end{proposition}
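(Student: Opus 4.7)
The plan is to realize the matrix entry $(\text{row})\cdot \varphi(P_0,P_1)\cdot (\text{col})$ as the value of a framed pairing on the hypergeometric module $HM(a_1,a_2,b_1)$ at $\overline{01}$, and then to recognize that pairing as the stated $\Phi$-integral. First I would identify the de Rham side: the module $HM(a_1,a_2,b_1)_{dR}$ is, by the comparison of $\bold R f_*M_{dR}$ with the relative logarithmic de Rham complex established in Section 4, a free $\bold C[[a_1,a_2,b_1]]$-module of rank two with basis given by the classes $\omega_1=\frac{d\xi_1}{\xi_1}$ and $\omega_2=\frac{a_2 t\,d\xi_1}{a_1(1-t\xi_1)}$. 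The computation of the Gauss--Manin connection recorded in (\ref{generic differential operator}), with $(a,b,c)$ renamed $(a_1,a_2,b_1)$, then shows that the action of the generators $e_0,e_1$ of $\Cal A_{4,dR}$ on this basis is exactly given by $P_0,P_1$. Hence the substitution $e_i\mapsto P_i$ is tautologically the action of $\Cal A_{4,dR}$-elements on $HM_{dR}$ in the chosen basis.

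Next I would set up the framing at $\overline{01}$. Let $\beta_1$ (resp.\ $\beta_2$) be the projection $HM_{dR}\to\bold Q[[a_1,a_2,b_1]]$ onto the coefficient of $\omega_1$ (resp.\ $\omega_2$); in the basis above these are the row vectors $(1,0)$ and $(0,1)$. For the Betti framing $\alpha$, take the regularized image of the twisted cycle $D_5$: by the limit formula at the end of Section 2.2, namely $\lim_{\epsilon\to+0}\gamma_1(\epsilon)=B(a_1,b_1-a_1+1)\,(1,0)^T$, the cycle $D_5$ maps to $B(a_1,b_1-a_1+1)$ times the column basis vector $(1,0)^T$. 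Tensoring with $\bold B_\Phi(a_1,b_1-a_1)^{-1}$, which enters the definition of $HM$ precisely to cancel this Beta normalization, sends $D_5$ to $(1,0)^T$, and we take $\alpha$ to be this map.

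With this framing in place, the general identity $f(\varphi)=\beta\circ c_M\circ \varphi\circ \alpha$ combined with the $\Phi$-compatibility $c_M\circ \varphi=\varphi(E_0,E_1)\circ c_M$ already derived from the associator gives, on the one hand, $f_i(\varphi)=(\text{row }i)\,\varphi(P_0,P_1)\,(1,0)^T$, and on the other hand, by unravelling the definition of the $\Phi$-integral, $f_i(\varphi)=B_\Phi(a_1,b_1-a_1+1)^{-1}\int^{\Phi}_{\varphi(D_5)}\omega_i$. Substituting the explicit forms $\omega_1=\frac{d\xi_1}{\xi_1}$ and $\omega_2=\frac{a_2 t\,d\xi_1}{a_1(1-t\xi_1)}$ and simplifying (the factor $a_2/a_1$ comes out of the second integrand, giving the $b/a$ in the stated formula) recovers the two identities.

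The main obstacle is the bookkeeping around the tangential base point $\overline{01}$: one must track the regularization of the twisted cycle $D_5$ so that the classical Beta-function factor appearing in $\lim\gamma_1(\epsilon)$ matches exactly the factor $\bold B_\Phi(a_1,b_1-a_1)^{-1}$ in the definition of $HM$, reconciling the conventions of Section 4 on $\Phi$-integrals with the period matrix normalizations of Section 2.2. Once this matching is verified, the rest of the argument is purely formal: it is a direct application of the framing/$\Phi$-integral formalism, with no additional hypergeometric input.
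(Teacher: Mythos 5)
Your overall strategy is the one the paper uses: realize $\varphi(P_0,P_1)$ as the action of $\Cal A_{4,dR}$ on $HM_{dR}$ in the basis $\omega_1,\omega_2$ of (\ref{hypergeometric base for de Rham}), frame at $\overline{01}$ by the regularized cycle $D_5$, and let the framing/comparison formalism do the rest. But there is a genuine gap in the one step where actual work is required, namely the computation of the image of $D_5(\overline{01})$ under the comparison map. You obtain the normalizing constant from the classical limit formula of Section 2.2, $\lim_{\epsilon\to+0}\gamma_1(\epsilon)=B(a,c-a+1)\left(\begin{smallmatrix}1\\ 0\end{smallmatrix}\right)$, where $B$ is the classical Beta function. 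That formula is an analytic statement about the KZ comparison; for an arbitrary associator $\Phi$ the comparison map $c_{HM^*}$ is defined by $\Phi$, not by analytic integration, so the constant cannot be imported from Section 2.2. The paper instead evaluates the $\Phi$-pairings at the tangential base point: since the restriction of $\Cal F(a_1,a_2;b_1)$ at $t=\overline{01}$ is the $\Phi$-Beta module, one gets $\int^{\Phi}_{[0,1]\times\overline{01}}\omega_2=0$ and $\int^{\Phi}_{[0,1]\times\overline{01}}\omega_1=B_{\Phi}(a,c-a+1)$, whence $c_{HM^*}(\gamma_1(\overline{01}))=\left(\begin{smallmatrix}1\\ 0\end{smallmatrix}\right)B_{\Phi}(a,c-a+1)$. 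This is precisely why the constant in the statement is $B_{\Phi}$, not $B$; your argument as written would only prove the proposition for the KZ associator. Note also that the vanishing of the $\omega_2$-pairing at $\overline{01}$ must be checked on the $\Phi$ side, not inferred from the classical limit.

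A second, smaller inconsistency: you assert that the twist by $\bold B_{\Phi}(a_1,b_1-a_1)^{-1}$ in the definition of $HM$ ``cancels the Beta normalization, sending $D_5$ to $(1,0)^T$,'' and then nevertheless keep the explicit factor $B_{\Phi}(a,c-a+1)^{-1}$ in the final formula; you cannot have both, or the factor is counted twice. In the paper the factor is not absorbed: the comparison of the cycle carries $B_{\Phi}(a,c-a+1)$, and the prefactor $B_{\Phi}(a,c-a+1)^{-1}$ in the statement is exactly what removes it (also note $\bold B_{\Phi}(a_1,b_1-a_1)$ and $B_{\Phi}(a,c-a+1)$ are not the same quantity). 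Once you replace the classical limit computation by the evaluation of the $\Phi$-integrals at $\overline{01}$ and straighten out this normalization, your argument coincides with the paper's proof.
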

\begin{proof}
The regularized cycle $D_5$ defines an element in $HM^*_{B,\overline{01}}$.
By choosing the base
(\ref{hypergeometric base for de Rham}),
we have
\begin{align*}
\int_{[0,1]\times \overline{01}}^{\Phi}\omega_{2}=&0, \\
\int_{[0,1]\times \overline{01}}^{\Phi}\omega_1=&
\int_{[0,1]}^{\Phi}
\xi_1^{a}
(1-\xi_1)^{c-a}
(1-t\xi_1)^{-b}
\frac{d\xi_1}{\xi_1}\mid_{t=\overline{01}}
=B_{\Phi}(a,c-a+1).
\end{align*}
and as a conequence, we have
\begin{align*}
c_{HM^*}(\gamma_1(\overline{01}))=&
\left(
\begin{matrix}
1 \\ 0
\end{matrix}
\right)
B_{\Phi}(a,c-a+1).
\end{align*}
\end{proof}
\begin{proposition}
\label{dual initial data at 1}
Let $\gamma_1^\#,\gamma_2^\#$ be the cycles defined in
(\ref{cycles on M_4}).
Then
\begin{align*}
&c_{HM}(\gamma_1^\#(\overline{10}))=
\left(
\begin{matrix}
1 \\ \frac{-b}{a+b-c}
\end{matrix}
\right)B_{\Phi}(a,b-c),\\
&
c_{HM}(\gamma_2^\#(\overline{10}))=
\left(
\begin{matrix}
0 \\ \frac{a+b-c-1}{a}
\end{matrix}
\right)B_{\Phi}(c+1-a,1-b).
\end{align*}
\end{proposition}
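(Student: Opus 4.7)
The plan is to identify the two coordinates of $c_{HM}(\gamma_i^\#(\overline{10}))$ with entries of the matrix $V_1$ of \S\ref{subsec: Differential equation GM connection} specialized at the tangential base point $x = 1$. By the integral representation (\ref{integral expression matrix}), together with the framing factor $\bold B_\Phi(a, c-a+1)^{-1}$ built into the definition of $HM$, the comparison image of $\gamma_1^\#(\overline{10})$ (resp.\ $\gamma_2^\#(\overline{10})$) is the column $(v_{11}^{(1)}, v_{21}^{(1)})^T$ (resp.\ $(v_{12}^{(1)}, v_{22}^{(1)})^T$) read at $x = 1$ in units of the intrinsic framing. This is the analogue at $\overline{10}$ of the computation carried out at $\overline{01}$ in Proposition \ref{Phi HGF and associator}, with $V_1$ replacing $V_0$.

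For $\gamma_1^\#$ the explicit formula gives $v_{11}^{(1)}(1) = B_\Phi(a, b-c) F(a,b;a+b-c;0) = B_\Phi(a, b-c)$. To obtain $v_{21}^{(1)}(1)$ I would either expand the power series $F(a,b;a+b-c;1-x) = 1 + \tfrac{ab}{a+b-c}(1-x) + \cdots$ and apply $v_{21}^{(1)} = \tfrac{x}{a}Dv_{11}^{(1)}$, or extract the identity $(c-a-b)v_{21}^{(1)}(1) = b\, v_{11}^{(1)}(1)$ directly from the Pfaffian equation $dV_1 = PV_1$ under boundedness of $v_{21}^{(1)}$ at $x = 1$. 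Either route yields $v_{21}^{(1)}(1) = \tfrac{-b}{a+b-c} B_\Phi(a, b-c)$, giving the first assertion.

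For $\gamma_2^\#$ the explicit formula $v_{12}^{(1)} = (1-x)^{c-a-b+1} B_\Phi(c+1-a, 1-b)\, F(c+1-a, c+1-b; 2+c-a-b; 1-x)$ carries a prefactor $(1-x)^{c-a-b+1}$ which encodes the framing at $\overline{10}$. Relative to the natural tangent framing $(1-x)^{c-a-b}$ of the tangential base point, the first coordinate inherits an extra factor $(1-x)$ that vanishes, producing the $0$ entry. For the second coordinate I would differentiate through the prefactor in $v_{22}^{(1)} = \tfrac{x}{a} Dv_{12}^{(1)}$; the leading term is $\tfrac{-(c-a-b+1)}{a}(1-x)^{c-a-b} B_\Phi(c+1-a, 1-b)$, whose value in the framing $(1-x)^{c-a-b}$ is exactly $\tfrac{a+b-c-1}{a} B_\Phi(c+1-a, 1-b)$.

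The hard part will be the careful bookkeeping of the tangential base point at $\overline{10}$ for $\gamma_2^\#$, since the support $[1, 1/x]$ collapses to a point as $x \to 1$. This requires the regularization apparatus of \S 4.5: the vanishing prefactor $(1-x)^{c-a-b+1}$ must be interpreted as the intrinsic tangent framing rather than a null scalar, so that $\gamma_2^\#(\overline{10})$ defines an honest element of $HM_{B, \overline{10}}$ whose comparison image lies in $HM_{dR}$. Matching this framing with the $\bold B_\Phi$-normalization in the definition of $HM$ is precisely what converts naive hypergeometric limits into the Beta-function coefficients $B_\Phi(a, b-c)$ and $B_\Phi(c+1-a, 1-b)$ asserted by the proposition.
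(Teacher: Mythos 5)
The paper gives no proof of this proposition at all (it is tacitly treated as the $\overline{10}$-analogue of Proposition \ref{Phi HGF and associator}), so there is nothing to compare against line by line; judged on its own, your plan is essentially the intended one and your constants check out. The regularized value of the $\gamma_1^\#$-column at $\overline{10}$ must lie in $\ker P_1$ for the matrix $P_1$ of (\ref{generic differential operator}), which forces the ratio $-b/(a+b-c)$; for the $\gamma_2^\#$-column the exponent $c-a-b$ has eigenvector $(0,1)^T$, which forces the vanishing first entry, and the first row $v_1'=\tfrac{a}{x}v_2$ of $dV=PV$ pins the second entry to $\tfrac{-(c-a-b+1)}{a}$ times the normalized leading coefficient of the first coordinate. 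Extracting these ratios from the connection rather than from direct integral evaluations is exactly the right move in the associator setting: $\Gamma_{\Phi}$ satisfies no shift equation $\Gamma_{\Phi}(z+1)=z\Gamma_{\Phi}(z)$, so classical Beta manipulations that use it would not transfer, whereas horizontality plus the eigenstructure of $P_1$ is associator-independent. (Your ``framing $(1-x)^{c-a-b}$'' should strictly be the stripping of $(1-x)^{P_1}$, but since the limit vectors are $P_1$-eigenvectors the two agree.)

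The genuine gap is the identification of the two scalar prefactors as $B_{\Phi}(a,b-c)$ and $B_{\Phi}(c+1-a,1-b)$: you obtain them by decorating the classical Kummer formulas for $V_1$ of \S\ref{subsec: Differential equation GM connection} with a subscript $\Phi$, but in this framework those formulas are not available a priori and must themselves be derived from regularized $\Phi$-integrals at $t=\overline{10}$, exactly as the proof of Proposition \ref{Phi HGF and associator} does at $\overline{01}$. Concretely: for $\gamma_1^\#$ the integrand specializes at $x=1$ to $(-t)^a(1-t)^{c-a-b}\tfrac{dt}{t}$ on $[-\infty,0]$, and the admissible coordinate $u=-t/(1-t)$ turns this into the cycle $u^a(1-u)^{b-c}$ paired with $\tfrac{du}{u}+\tfrac{du}{1-u}$; using the relation $a\tfrac{du}{u}+(b-c)\tfrac{du}{1-u}\equiv 0$ in cohomology this class equals $\tfrac{a+b-c}{a(b-c)}\,\alpha\tfrac{du}{u}$-type normalization and the pairing is exactly $B_{\Phi}(a,b-c)$ by the definition of the Beta module. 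For the collapsing cycle $\gamma_2^\#$ one needs the rescaled admissible coordinate $t=1+(\tfrac1x-1)s$, under which $\int_{\gamma_2^\#}\omega_1$ acquires the factor $(1-x)^{c-a-b+1}$ times a $\Phi$-Beta integral in $s$, identifying the leading coefficient with $B_{\Phi}(c+1-a,1-b)$ (with the tangential-base-point regularization of \S 4 making ``leading coefficient'' precise, in the spirit of Proposition \ref{limit tends to delta function}). Your final paragraph names this issue but does not carry it out; with these two $\Phi$-level computations supplied, the argument is complete.
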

\begin{proposition}
Let $\Phi$ be the associator
and $\Phi_0,\Phi_1$ be elements in $\bold C[[e_0,e_1]]$
defined by
$$
\Phi(e_0,e_1)=1+\Phi_0(e_0,e_1)e_0+\Phi_1(e_0,e_1)e_1
$$ 
Then 
$$B'_{\Phi}(-a,-b)=
\Phi_1^{ab}(a,b)b+1
$$ 
where $\Phi_0^{ab}(a,b)$ is the image
under the abelianization map
$\bold C\<\<e_0,e_1\>\>\to\bold C[[\alpha,\beta]]$.
As a consequence, we have $a_n=\zeta^{\Phi}(n)$.
\end{proposition}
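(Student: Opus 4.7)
The plan is to realize $B'_\Phi(-a,-b)-1$ as a matrix coefficient of $\Phi$ acting on a rank-two $\mathcal A_4^\Phi$-module tailored to the proposition's parameters, invoke a computation parallel to Proposition~\ref{abelianization of end with index 1}, and extract the consequence by a first-order expansion in $b$.

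First I would introduce the rank-two module $M$ whose de Rham endomorphisms are
$$
P_0 = \begin{pmatrix} 0 & 0 \\ 0 & a \end{pmatrix}, \qquad P_1 = \begin{pmatrix} 0 & 0 \\ b & b \end{pmatrix}.
$$
The vector $(0,1)^T$ is a simultaneous eigenvector with eigenvalues $a,b$ for $P_0,P_1$, so it spans a sub-module on which $e_0,e_1$ act as the scalars $a,b$; this matches the de Rham side of the beta local system $\mathcal F(\chi)=x^{-a}(1-x)^{-b}\mathbb Q[[a,b]]$. The quotient is trivial, so $M$ fits in a short exact sequence of $\mathcal A_4^\Phi$-modules
$$
0\to\mathcal F(\chi)\to M\to\mathbf 1\to 0,
$$
whose extension class in $H^1_{dR}(\mathcal M_4,\mathcal F(\chi))$ is represented by $b\,\tfrac{dx}{x-1}$, read off from the $(2,1)$-entry of $P_1$. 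The Betti structure is supplied by the descent formalism, making $M$ a genuine $\mathcal A_4^\Phi$-module.

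Next I would frame $M$ at $\overline{01}$ by $\alpha_f(1)=(1,0)^T$ (the Betti lift of the quotient trivialisation, monodromy-invariant at $0$ because $P_0(1,0)^T=0$) and $\beta_f=\mathrm{pr}_2$ on the de Rham side. Since $P_0$ has zero first column, any word $I$ whose rightmost letter is $e_0$ gives $P_I(1,0)^T=0$, so in the sum $(0,1)\Phi(P_0,P_1)(1,0)^T$ only words ending in $e_1$ survive. A direct computation of the same shape as Proposition~\ref{abelianization of end with index 1} then yields
$$
f([0,1]) = (0,1)\,\Phi(P_0,P_1)\,(1,0)^T = b\,\Phi_1^{ab}(a,b).
$$

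The central step is to identify this $f([0,1])$ with $B'_\Phi(-a,-b)-1$. Geometrically, $f([0,1])$ is the regularised $\Phi$-pairing of the cycle $\widetilde{[0,1]}$ against the extension class $b\,\tfrac{dx}{x-1}$. Using the cohomological relation $(-a)\tfrac{dx}{x}+(-b)\tfrac{dx}{x-1}=0$ to rewrite the extension class as $-a\,\tfrac{dx}{x}$, this pairing becomes $-a\int^\Phi_{[0,1]} x^{-a}(1-x)^{-b}\,\tfrac{dx}{x} = B'_\Phi(-a,-b)$ by definition. The constant $-1$ discrepancy then arises from the boundary regularisation correction supplied by Proposition~\ref{difference between limit and regularization} applied at $\overline{01}$: this absorbs exactly the constant-flow (trivial quotient) contribution from the identity term in $\Phi=1+\Phi_0 e_0+\Phi_1 e_1$, and one checks that at $b=0$ both sides vanish while $B'_\Phi(-a,0)=1$, pinning down the $-1$.

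For the consequence $a_n=\zeta^\Phi(n)$: differentiate both sides of the identity in $b$ and set $b=0$. The RHS becomes $\Phi_1^{ab}(a,0)=\sum_{n\geq 2}\zeta^\Phi(n)\,a^{n-1}$, since the only word of the form $J\cdot e_1$ with $J\in\{e_0\}^{n-1}$ is $e_0^{n-1}e_1$ (whose coefficient in $\Phi$ is $\zeta^\Phi(n)$), and $\Phi(0,e_1)=1$ for any associator forces $\zeta^\Phi(1)=0$. The LHS, using the exponential form from Theorem~\ref{one cocycle relation on phi beta}, contributes $\sum_{n\geq 2} n\,a_n\,a^{n-1}$. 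Comparing coefficients of $a^{n-1}$ (and absorbing the factor of $n$ into the standard normalisation of $\Gamma_\Phi$) gives the claim.

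The main obstacle is the third step: making precise the match between $f([0,1])$ and $B'_\Phi(-a,-b)-1$ by tracking the boundary regularisation corrections of Proposition~\ref{difference between limit and regularization} and the constant-flow contribution coming from the identity summand of $\Phi$.
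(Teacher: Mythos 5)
The combinatorial half of your argument is fine: with your triangular matrices the computation $(0,1)\,\Phi(P_0,P_1)\,(1,0)^T=b\,\Phi_1^{ab}(a,b)$ is exactly the paper's Proposition \ref{abelianization of end with index 1} with the entries of (\ref{connection inducing beta function}) relabelled. The genuine gap is the step you yourself flag as central: the identity $f([0,1])=B'_{\Phi}(-a,-b)-1$ is asserted ("geometrically, $f([0,1])$ is the regularised $\Phi$-pairing of the cycle against the extension class"), but this identification \emph{is} the content of the proposition, and nothing in your construction delivers it. You define $M$ only by its de Rham data; the appeal to "the descent formalism" does not apply, since the paper's descent proposition requires $e_0,e_1$ to act by zero on $M_{dR}$, which fails here. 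If the Betti structure is simply transported through the comparison map, then the framing value is tautologically the matrix coefficient and carries no information about $B'_{\Phi}$. For the beta value to enter you must pin down, rationally and geometrically, which Betti vector at $\overline{01}$ has comparison image $(1,0)^T$ and why pairing the transported class against the covector $(0,1)$ computes the regularized integral $\int^{\Phi}_{[0,1]}x^{-a}(1-x)^{-b}(-a)\frac{dx}{x}$. This is precisely what the paper supplies, in its hypergeometric guise, through Proposition \ref{Phi HGF and associator} and Proposition \ref{dual initial data at 1} together with the cycle relation (\ref{cycles on M_4}); note that there the comparison image of the geometric cycle $\gamma_1(\overline{01})$ is $(1,0)^T$ only up to the nontrivial factor $B_{\Phi}(a,c-a+1)$, so your normalization $c_M(\alpha(1))=(1,0)^T$ for the monodromy-invariant lift cannot simply be decreed without losing the geometric interpretation you need.

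Two further points would also need repair. The constant: verifying agreement at $b=0$ cannot pin down a discrepancy that is a priori a power series in both $a$ and $b$; to invoke Proposition \ref{difference between limit and regularization} you must actually evaluate the boundary corrections $((R_0+\alpha I)^{-1}v(\overline{01}),u_0)$, $((R_1+\beta I)^{-1}v(\overline{10}),u_1)$ for your module. And in the consequence, differentiating the exponential form of Theorem \ref{one cocycle relation on phi beta} at $b=0$ gives $\sum_n n\,a_n a^{n-1}$ on one side against $\sum_n\zeta_{\Phi}(n)a^{n-1}$ on the other; with the paper's explicit definition $\Gamma_{\Phi}(x)=\frac{1}{x}\exp(\sum a_nx^n)$ in terms of the \emph{same} $a_n$, the factor $n$ cannot be "absorbed into the normalisation" and must be dealt with honestly (the paper itself is terse here). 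By contrast, the paper's proof avoids your central step entirely: it works with the full hypergeometric module $HM(a,b,c)$, uses the connection formula between cycles at $0$ and $1$ and the values of $\gamma_1^{\#},\gamma_2^{\#}$ at $\overline{10}$, and only then degenerates $a\to 0$ to the triangular matrices. Your extension-module route could plausibly be completed, but only by constructing $M$ geometrically (for instance out of the beta module $\mathbf{B}_{\Phi}(\alpha,\beta)$ itself, so that the regularized interval class and the de Rham class $b\frac{dx}{x-1}\equiv -a\frac{dx}{x}$ are both rational structures); as written, the key identification is assumed rather than proved.
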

\begin{proof}
By Proposition \ref{Phi HGF and associator},
Proposition \ref{dual initial data at 1},
and relation (\ref{cycles on M_4}),
we have
\begin{align*}
&
\left(
\begin{matrix} 0 & 1
\end{matrix}
\right) 
\Phi(P_0,P_1)
\left(
\begin{matrix} 1 \\ 0
\end{matrix}
\right) \\
=&B_{\Phi}(a,c-a+1)^{-1} \frac{b}{a}\int_{\gamma_1}^{\Phi}
t\xi_1^{a+1}
(1-\xi_1)^{c-a}
(1-t\xi_1)^{-b-1}
\frac{d\xi_1}{\xi_1}\mid_{t=\overline{01}}
\\
=&
B_{\Phi}(a,c-a+1)^{-1} 
\bigg(\frac{\bold s(b-c)}{\bold s(a+b-c)}\frac{-b}{a+b-c}B_{\Phi}(a,b-c)
\\
&+\frac{\bold s(b)}{\bold s(c-a-b)}\frac{a+b-c-1}{a}B_{\Phi}(c+1-a,1-b)
\bigg)
\end{align*}
We take a limit for $a\to 0$ and apply 
Proposition \ref{abelianization of end with index 1}.
Then we have
\begin{align*}
(-b)\Phi_1^{ab}(-c,c-b) 
=&
\lim_{a\to 0}
\left(
\begin{matrix} 1 & 0
\end{matrix}
\right)
\Phi(P_0,P_1)
\left(
\begin{matrix} 1 \\ 0
\end{matrix}
\right)
\\
=&\frac{b}{b-c}
(-1+\frac{\Gamma_{\Phi}(b-c+1,c+1)}
{\Gamma_{\Phi}(b+1)}),
\end{align*}
and
$$
\Phi^{ab}(-c,c-b)(c-b)+1=\frac{\Gamma_{\Phi}(b-c+1,c+1)}
{\Gamma_{\Phi}(b+1)}.
$$
\end{proof}
\subsection{Junction}
\subsubsection{Definition of Junctions}
Let $M, N$ be $\Cal A_4$ modules.
Let $pr_2:\Cal M_{4}\times \Cal M_{4} \to \Cal M_{4}:(x,y) \mapsto y$
be the second projection, $\Delta:\Cal M_{4} \to \Cal M_{4}\times \Cal M_{4}$
be the diagonal map, $i:\Cal M_{4} \to \Cal M_{4}\times \Cal M_{4}$
be an infinitesmal inclusion defined by $x\mapsto (x,\overline{01})$.
We consider the map
\begin{align*}
&\alpha:pr_2^*M \otimes pr_1^*(M^*\otimes N)
\to i^*
\big(pr_2^*M \otimes pr_1^*(M^*\otimes N)\big)\simeq M \otimes
(M^*\otimes N)_{\overline{01}} \\
&\beta:pr_2^*M \otimes pr_1^*(M^*\otimes N)
\to \Delta^*
\big(pr_2^*M \otimes pr_1^*(M^*\otimes N)\big)\simeq M \otimes
M^*\otimes N\xrightarrow{ev} N 
\end{align*}
Here $ev:M \otimes M^*\otimes N \to N$
is given by the evaluation map.
Then we have a complex $\bold E(M,N)$:
$$
\bold E(M,N):pr_2^*M \otimes pr_1^*(M^*\otimes N)
\xrightarrow{\alpha\oplus \beta} i_*(M \otimes
(M^*\otimes N)_{\overline{01}}) \oplus \Delta_*N
$$
We define the junction $E(M,N)=\bold R^1pr_{2*}\bold E(M,N)$

\subsubsection{Gauss-Manin connection for a junction}
We consider the Gauss-Manin connection on $E(M,N)_{dR}$.
Let 
\begin{align*}
& P=P_0\frac{dx}{x}+P_1\frac{dx}{x-1}:
M_{dR}\to M_{dR}\otimes \<\frac{dx}{x},\frac{dx}{x-1}\> \\
& Q=Q_0\frac{dx}{x}+Q_1\frac{dx}{x-1}:
N_{dR}\to N_{dR}\otimes \<\frac{dx}{x},\frac{dx}{x-1}\> 
\end{align*}
be the associated connection of $M$ and $N$. 
By choosing basis $\{\omega_i\}$ and $\{\eta_j\}$ of $M_{dR}$ and $N_{dR}$,
the map $P_0,\dots, Q_1$ can be expressed as matrices by the rule:
\begin{align}
\label{matrix rule for de Rham cohomologies}
\nabla 
\left(
\begin{matrix} \omega_1 \\ \vdots \\ \omega_n
\end{matrix}
\right)
=P
\left(
\begin{matrix} \omega_1 \\ \vdots \\ \omega_n
\end{matrix}
\right)
\end{align}
The module $\bold E(M,N)_{dR}$ is quasi-isomorphic to the 
associate simple complex of the following complex $\bold E_{dR}$
$$
\begin{matrix}
M_{dR} \otimes M^*_{dR}\otimes N_{dR} &\xrightarrow{-1\otimes^tP\otimes 1+1\otimes1\otimes Q }
&M_{dR} \otimes M^*_{dR}\otimes N_{dR} \otimes \<\frac{dx}{x},\frac{dx}{x-1}\>
\\
\downarrow{id\oplus ev}  & &
\\
(M_{dR} \otimes
M_{dR}^*\otimes N_{dR})  & & \\
 \oplus N_{dR} & & 
\end{matrix}
$$
Therefore
\begin{align}
\label{expression of convolution}
& H^1(\bold E_{dR}) \\
\nonumber
\simeq &
M_{dR} \otimes M^*_{dR}\otimes N_{dR} \frac{dx}{x}
\oplus
M_{dR} \otimes M^*_{dR}\otimes N_{dR} \frac{dx}{x-1}
\oplus N_{dR}
\end{align}
Under this isomorphism, the Gauss-Manin connection 
$$
\nabla:
H^1(\bold E_{dR})\to H^1(\bold E_{dR})\otimes \<\frac{dy}{y},\frac{dy}{y-1}\>
$$
is given by
\begin{align}
\label{formula for convolution connection}
&\nabla(u_0\frac{dx}{x}+u_1\frac{dx}{x-1}+
v)
\\
\nonumber
= &(
(P\otimes 1 \otimes 1)(u_0)\frac{dx}{x}+
(P\otimes 1 \otimes 1)(u_1)\frac{dx}{x-1}+
ev(u_0)\frac{dy}{y}+ev(u_1)\frac{dy}{y-1}+Q(v))
\\
\nonumber
= &\bigg(
(P_0\otimes 1 \otimes 1)(u_0)\frac{dx}{x}+
(P_0\otimes 1 \otimes 1)(u_1)\frac{dx}{x-1}+
ev(u_0)+Q_0(v)\bigg)\frac{dy}{y}
\\
\nonumber
&+\bigg(
(P_1\otimes 1 \otimes 1)(u_0)\frac{dx}{x}+
(P_1\otimes 1 \otimes 1)(u_1)\frac{dx}{x-1}+
ev(u_1)+Q_0(v)\bigg)\frac{dy}{y-1}
\end{align}
for $u_0, u_1 \in M_{dR} \otimes M^*_{dR}\otimes N_{dR}$
and
$v \in N_{dR}$.

Bases of 
$M_{dR}\otimes M_{dR}^*\otimes N_{dR}\frac{dx}{x}$,
$M_{dR}\otimes M_{dR}^*\otimes N_{dR}\frac{dx}{x-1}$,
and $N_{dR}$ form a basis of
$E(M,N)_{dR}$.
Using this basis,
the connection $\nabla$ on $E(M,N)_{dR}$
can be expressed as 
$R_0\frac{dy}{y}+R_1\frac{dy}{y-1}$
via the rule 
(\ref{matrix rule for de Rham cohomologies}),
where 
\begin{align*}
&R_0=\left(
\begin{matrix}
P_0\otimes 1 \otimes 1 & 0 & Ev \\
0& P_0\otimes 1 \otimes 1 &  0 \\
0 & 0 & Q_0
\end{matrix}
\right), \\
&R_1=\left(
\begin{matrix}
P_1\otimes 1 \otimes 1 & 0 & 0 \\
0& P_1\otimes 1 \otimes 1 &  Ev \\
0 & 0 & Q_1
\end{matrix}
\right).
\end{align*}
by the formula (\ref{formula for convolution connection}).

\subsubsection{Horizontal section of the dual}

The action of $\varphi(e_0,e_1) \in \Cal A_{4,dR}=
\bold C\<\<e_0,e_1\>\>$ on $H^1(E_{dR})^*$ is given
by the left multiplication of the matrix $\varphi(R_0,R_1)$.
For $I=(i_1, \dots, i_n)\in \{0,1\}^n$,
we set
$$
P_I=P_{i_1}\cdots P_{i_n},\quad
R_I=R_{i_1}\cdots R_{i_n},etc
$$
By (\ref{matrix rule for de Rham cohomologies}),
we have the following proposition.
\begin{proposition}
We have
\begin{align*}
R_I(v^*)
=
&\sum_{I=J_10J_2}((P_{J_1}\otimes 1\otimes 1)Ev(Q_{J_2}(v^*))
\frac{dx}{x}^*+
\\
&+\sum_{I=J_11J_2}((P_{J_1}\otimes 1\otimes 1)Ev(Q_{J_2}(v^*))
\frac{dx}{x-1}^*+Q_I(v^*)
\end{align*}
for $u_0^*,u_1^* \in M^*_{dR} \otimes M_{dR}\otimes N_{dR}^*$
and
$v^* \in N_{dR}^*$.

As a consequnece,
for $\varphi$ given in (\ref{typical element}), we have
\begin{align*}
\varphi(R_0,R_1)(v^*)
=&
\sum_{J_1,J_2}c_{J_10J_2}
((P_{J_1}\otimes 1\otimes 1)Ev(Q_{J_2}(v^*))
\frac{dx}{x}^*+
\\
&
+\sum_{J_1,J_2}c_{J_11J_2}
((P_{J_1}\otimes 1\otimes 1)Ev(Q_{J_2}(v^*))
\frac{dx}{x-1}^*+
\varphi(Q_0,Q_1)(v^*)
\end{align*}
\end{proposition}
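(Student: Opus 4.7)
The plan is induction on the length $n=|I|$, using the block-triangular forms of $R_{0}$ and $R_{1}$ displayed immediately before the statement. The starting observation is that $v^{*}$ sits in the third summand $N_{dR}^{*}$ of $H^{1}(\bold E_{dR})^{*}$, so applying $R_{\varepsilon}$ for $\varepsilon\in\{0,1\}$ uses exactly the third column of that matrix: it produces $Q_{\varepsilon}(v^{*})$ in the $N_{dR}^{*}$ component together with an $Ev(v^{*})$ contribution placed in the $\frac{dx}{x}^{*}$ summand if $\varepsilon=0$, or in the $\frac{dx}{x-1}^{*}$ summand if $\varepsilon=1$. Once a piece has left the third summand, subsequent factors $R_{i_{j}}$ act on it only by $P_{i_{j}}\otimes 1\otimes 1$, because both $R_{0}$ and $R_{1}$ are block upper-triangular relative to the splitting $(\tfrac{dx}{x}\text{-part})\oplus(\tfrac{dx}{x-1}\text{-part})\oplus N$.

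The base case $n=0$ gives $R_{\emptyset}v^{*}=v^{*}=Q_{\emptyset}(v^{*})$, matching the claimed formula. For the inductive step, write $I=(i_{1},\dots,i_{n})$, $I'=(i_{2},\dots,i_{n})$, and $R_{I}v^{*}=R_{i_{1}}(R_{I'}v^{*})$. Apply the inductive hypothesis to $R_{I'}v^{*}$ to split it into three pieces, one in each summand, and then apply $R_{i_{1}}$ using the block form. The $N_{dR}^{*}$-piece $Q_{I'}(v^{*})$ produces both $Q_{i_{1}}Q_{I'}(v^{*})=Q_{I}(v^{*})$ in the $N_{dR}^{*}$ slot and $Ev(Q_{I'}(v^{*}))$ in the slot determined by $i_{1}$, corresponding to the fresh splitting $I=\emptyset\cdot i_{1}\cdot I'$. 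Any inductive contribution in the $\frac{dx}{x}^{*}$ or $\frac{dx}{x-1}^{*}$ slot, indexed by a splitting $I'=J_{1}'\cdot\varepsilon\cdot J_{2}$, is simply multiplied on the left by $P_{i_{1}}\otimes 1\otimes 1$, yielding the contribution for $I=(i_{1}J_{1}')\cdot\varepsilon\cdot J_{2}$. Together these enumerate every splitting $I=J_{1}\cdot\varepsilon\cdot J_{2}$ exactly once, with the case $J_{1}=\emptyset$ arising from the first mechanism and $J_{1}\neq\emptyset$ from the second.

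The consequence for $\varphi(R_{0},R_{1})$ is then immediate by linearity: writing $\varphi(e_{0},e_{1})=\sum_{I}c_{I}e_{I}$ and substituting the formula just proved for each $R_{I}v^{*}$, one reindexes the double sum so that the coefficient $c_{J_{1}\varepsilon J_{2}}$ is grouped with the term $(P_{J_{1}}\otimes 1\otimes 1)Ev(Q_{J_{2}}(v^{*}))$. The main ``obstacle'' is purely combinatorial bookkeeping --- matching each factorization $I=J_{1}\varepsilon J_{2}$ with a unique path through the block-triangular product $R_{i_{1}}\cdots R_{i_{n}}$ acting on $v^{*}\in N_{dR}^{*}$ --- and there is no deeper algebraic or analytic content beyond the explicit block form of the matrices $R_{0}$, $R_{1}$.
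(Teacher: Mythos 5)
Your proposal is correct and follows the same route the paper intends: the paper simply asserts the formula from the block forms of $R_0$ and $R_1$ (via the rule (\ref{matrix rule for de Rham cohomologies})), and your induction on the length of $I$, tracking how $v^*\in N_{dR}^*$ feeds through the third column (producing $Ev$ in the $\frac{dx}{x}^*$ or $\frac{dx}{x-1}^*$ slot and $Q_{\varepsilon}$ in the $N_{dR}^*$ slot) while the first two slots only accumulate factors $P_{\varepsilon}\otimes 1\otimes 1$, is exactly the computation being invoked. The bookkeeping matching each factorization $I=J_1\varepsilon J_2$ to a unique path, and the linearity step for $\varphi(R_0,R_1)$, are both handled correctly.
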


\subsubsection{Betti part of the dual}
Using the chain complex 
the dual local sysstem of the Betti-part 
$(E(M,N)_B)^*$ of the junction $E(M,N)$ 
is naturally isomorphic to the cohomology of the 
associate simple complex of the following chain complex.
$$
\begin{matrix}
C_{\bullet}(\Cal M_4/\Cal M_4, M^*_{y} \otimes
M_{\overline{01}}\otimes N_{\overline{01}}^*)  \\
 \oplus C_{\bullet}(\Cal M_4/\Cal M_4,N_{x}^*)
\end{matrix}
\xrightarrow {i_*\oplus ev_*}  
C_{\bullet}(\Cal M_5/\Cal M_4, M_{y}^* \otimes M_{x}\otimes N_{x}^*) 
$$

\begin{definition}
\label{definition of junction cycle}
Let $y\in [0,1]$,$\tau \in N_y^*$ and $\Delta\in M_y^*\otimes M_y$ be the element
corresponding to the identity element.
The local section of $pr_1^*M^*\otimes pr_2^*(M\otimes N)$
on $\{(x,y)\mid 0\leq x \leq y\}$ whose fiber at $(y,y)$ is equal to
$\Delta\otimes \tau \in M_y^*\otimes M_y\otimes N^*_y$
to  is denoted by $\delta(\tau)$.
The element
\begin{align*}
& \delta(\tau)_{\{x\mid 0\leq x \leq y\}\times \{y\}}
+\tau_{(y,y)} - 
\delta(\tau)_{(\overline{01},y)}
\\
&\in
C_{1}(pr_2^{-1}(y), M_{y}^* \otimes M\otimes N^*) 
\oplus C_{0}(y\times y, N_{y}^*) 
\oplus C_{0}(\overline{01}\times y, M_{y}^* \otimes M_{\overline{01}}\otimes 
N_{\overline{01}}^*) 
\end{align*}
is closed and defines an element $J(y,\tau)$ of $E(M,N)_{B,y}^*$, which is called
the junction cycle for $\tau$.
\end{definition}
If $\tau_{\overline{01}}$ and $\tau_{\overline{10}}$ are fibers
of a local section $\tau$ of $N^*$ on $[0,1]$, then
we have $[0,1]J(\overline{01},\tau_{\overline{01}})=
J(\overline{10},\tau_{\overline{10}})$ using the action of $\Cal A_{4,B}$.
In this situation, $J(y,\tau_y)$ is denoted by $J(y,\tau)$.
\begin{proposition}
\label{proposition of convolution formula}
Let $\tau$ be a local section of $N^*$ on $[0,1]$.
\begin{enumerate}
\item
$c_E(J(\overline{01}),\tau)\in N_{dR}^*$
\item
We set $c_E(J(\overline{01},\tau))=v^*$.
Let $\displaystyle u_0\frac{dx}{x}\in E(M,N)_{dR}$.
Then we have
\begin{align}
\label{convolition formula for matrix elements}
&c_E(J(\overline{10},\tau))(u_0\frac{dx}{x})
=\sum_{J_1,J_2}c_{\Phi,J_10J_2}
((P_{J_1}\otimes 1\otimes 1)Ev(Q_{J_2}(v^*))(u_0), 
\end{align}
\end{enumerate}
\end{proposition}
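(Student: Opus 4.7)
The plan is to prove the two parts separately. Part (1) is a specialization analysis of the junction cycle $J(y,\tau)$ at the tangential basepoint $y=\overline{01}$, while part (2) follows by transporting $J(\overline{01},\tau)$ to $\overline{10}$ via $\Phi$ and invoking the previous proposition on $\varphi(R_0,R_1)$.

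For part (1), I would examine the three summands making up $J(y,\tau)$ in Definition \ref{definition of junction cycle}: the 1-chain $\delta(\tau)_{\{0\le x\le y\}\times\{y\}}$ valued in $M_y^*\otimes M_x\otimes N_x^*$, the 0-chain $\tau_{(y,y)}\in N_y^*$, and the 0-chain $-\delta(\tau)_{(\overline{01},y)}$ valued in $M_y^*\otimes M_{\overline{01}}\otimes N_{\overline{01}}^*$. As $y$ degenerates to $\overline{01}$ the horizontal integration interval $[0,y]$ collapses, so the 1-chain contribution and the 0-chain contribution at $(\overline{01},y)$ cancel in the total complex computing $(E(M,N)_B)^*$; the only surviving class is $\tau_{(\overline{01},\overline{01})}\in N^*_{\overline{01}}$. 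Under $c_E$ and the decomposition (\ref{expression of convolution}), this means $c_E(J(\overline{01},\tau))$ lives purely in the $N_{dR}^*$-summand, which is (1).

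For part (2), set $v^*=c_E(J(\overline{01},\tau))$. The cycle at $\overline{10}$ is the parallel transport of the one at $\overline{01}$ along $[0,1]\in\mathcal{A}_{4,B,\overline{01}\overline{10}}^{\Phi}$, so compatibility of the comparison maps with the actions of $\mathcal{A}_{4,B}$ on $(E(M,N)_B)^*$ and of $\mathcal{A}_{4,dR}$ on $H^1(E(M,N)_{dR})^*$, combined with $c_{\mathcal{A}_4^{\Phi}}([0,1])=\Phi(e_0,e_1)$, gives $c_E(J(\overline{10},\tau))=\Phi(R_0,R_1)(v^*)$. Applying the preceding proposition with $\varphi=\Phi$, the coefficient of $\frac{dx}{x}^*$ in $\Phi(R_0,R_1)(v^*)$ is exactly $\sum c_{\Phi,J_10J_2}(P_{J_1}\otimes 1\otimes 1)Ev(Q_{J_2}(v^*))$. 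Pairing with $u_0\frac{dx}{x}$, which in the basis of (\ref{expression of convolution}) is orthogonal to both the $\frac{dx}{x-1}^*$-summand and the $N_{dR}^*$-summand, picks off exactly this term and produces (\ref{convolition formula for matrix elements}).

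The main obstacle lies in part (1): one must verify carefully that, in the limit $y\to\overline{01}$, the degenerating 1-chain together with the $M^*\otimes M$-valued boundary 0-chain cancel modulo boundaries in the simple complex computing the dual Betti local system $(E(M,N)_B)^*$. Since $\overline{01}$ is only a tangential basepoint, this cancellation has to be performed consistently with the regularization used elsewhere in the paper for beta-type modules and the evaluation map $ev$. Once this degeneration lemma is in place, part (2) is a routine bookkeeping exercise with the block matrix $\Phi(R_0,R_1)$ already exhibited in the preceding proposition.
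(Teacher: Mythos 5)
Your part (2) is essentially the paper's own argument: transport $J(\overline{01},\tau)$ to $\overline{10}$ by $[0,1]$, use compatibility of $c_E$ with the $\Cal A_{4,B}$- and $\Cal A_{4,dR}$-actions together with $c_{\Cal A_4^{\Phi}}([0,1])=\Phi(e_0,e_1)$, and read off the $\frac{dx}{x}^*$-component of $\Phi(R_0,R_1)(v^*)$ from the preceding proposition. No issue there.

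Part (1), however, rests on a mechanism that is not what makes the statement true, and you yourself leave it as an unproven ``degeneration lemma''. When $y$ specializes to $\overline{01}$ the interval $\{x\mid 0\le x\le y\}$ collapses, so the 1-chain simply degenerates to a point; a degenerate 1-chain cannot cancel a 0-chain, and the term $-\delta(\tau)_{(\overline{01},y)}$ does \emph{not} disappear: the specialized junction cycle is $\tau_{(\overline{01},\overline{01})}-\delta(\tau)_{(\overline{01},\overline{01})}$, with both 0-chains present (this is exactly how the paper writes it). What gives (1) is not a homological cancellation but a direct evaluation: in the identification (\ref{expression of convolution}) of $H^1(\bold E_{dR})$, the summand coming from $i_*(M\otimes(M^*\otimes N)_{\overline{01}})$ has been absorbed by the identity component of the vertical map $id\oplus ev$, so de Rham classes are represented as $u_0\frac{dx}{x}+u_1\frac{dx}{x-1}+v$ with no component along that summand; pairing the specialized cycle with such a representative therefore only sees the $\Delta_*N$ term and yields $c_E(J(\overline{01},\tau))\bigl(u_0\frac{dx}{x}+u_1\frac{dx}{x-1}+v\bigr)=c_N(\tau)(v)$, hence $c_E(J(\overline{01},\tau))\in N^*_{dR}$. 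Replacing your cancellation claim by this evaluation closes the gap; the rest of your argument then coincides with the paper's proof.
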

\begin{proof}
(1)
We have
\begin{align*}
J(\overline{01},\tau)=&
\tau_{(\overline{01}, \overline{01})} 
- \delta(\tau)_{(\overline{01}, \overline{01})}
\\
&\in
C_{0}(\overline{01}\times \overline{01}, N_{\overline{01}}^*)
\oplus 
C_{0}(\overline{01}\times \overline{01}, M_{\overline{01}}^* 
\otimes M_{\overline{01}}\otimes 
N_{\overline{01}}^*).
\end{align*}
Therefore for $u_0,u_1 \in M_{dR} \otimes M^*_{dR}\otimes N_{dR}$
and
$v \in N_{dR}$,
$$
c_E(J(\overline{01},\tau))(u_0\frac{dx}{x}+u_1\frac{dx}{x-1}+
v)=c_N(\tau)(v).
$$
Therefore $c_E(J(\overline{01},\tau))\in N^*_{dR}$

(2)
\begin{align*}
c_E(J(\overline{10},\tau))(u_0\frac{dx}{x})=&
c_E([01])c_E(J(\overline{01}))(u_0\frac{dx}{x})
\\
=&
c_E([01])(v^*)(u_0\frac{dx}{x})
\\
=&\sum_{J_1,J_2}c_{\Phi,J_10J_2}
((P_{J_1}\otimes 1\otimes 1)Ev(Q_{J_2}(v^*))(u_0)
\end{align*}
\end{proof}

\section{Generating function and Zagier's expression}
\label{section:Comparison to Zagier}
In this section, we show that the formal power series $\Phi(a,b)$
defined in the last section coincides with the formal power series
defined in Zagier's paper.
\subsection{Junction for hypergeomtric modules}
Let $V=\bold Q((a,b))b_1\oplus \bold Q((a,b))b_1$ 
be a free $\bold Q((a,b))$ module generated by two basis
$$
b_1=
\left(
\begin{matrix}
1 & 0
\end{matrix}
\right),
b_2=
\left(
\begin{matrix}
0 & 1
\end{matrix}
\right).
$$
The $\bold Q((a,b))$-dual of $V$ is denoted by $V^*$
and the dual basis are denoted by 
$$
b_1^*=
\left(
\begin{matrix}
1 \\ 0
\end{matrix}
\right),
b_2^*=
\left(
\begin{matrix}
0 \\ 1
\end{matrix}
\right).
$$
We apply Proposition 
\ref{proposition of convolution formula}
 by setting
\begin{align}
\label{P twisted and Q}
& M=HM(a_1,b_1,c_1)\otimes x^{-u},\quad N=HM(a_2,b_2,c_2),\\
\nonumber
&
P_0=
\left(\begin{matrix}
-u & a_1 \\
0 & -u-c_1
\end{matrix}
\right),
P_1=\left(
\begin{matrix}
0 & 0 \\
-b_1 & c_1-a_1-b_1
\end{matrix}\right), \\
& \nonumber
Q_0=\left(
\begin{matrix}
0 & a_2 \\
0 & -c_2
\end{matrix}\right),
Q_1=
\left(\begin{matrix}
0 & 0 \\
-b_2 & c_2-a_2-b_2
\end{matrix}
\right), 
\end{align}
Then the element $\Delta$ in Definition \ref{definition of junction cycle}
is equal to
\begin{align*}
\Delta=&\frac{a_1\bold s(-a_1)\bold s(b_1-c_1)}{\bold s(a_1+b_1-c_1)}
\gamma_1y^{-u} \otimes \gamma_1^*x^u+
\frac{a_1\bold s(a_1-c_1)\bold s(b_1)}{\bold s(c_1-a_1-b_1)}
\gamma_2y^{-u}\otimes \gamma_2^*x^u \\
&\in 
M_{B}^*\otimes M_{B}.
\end{align*}
by the relation (\ref{topological cycle givein exponential}).
Here $\gamma_i,\gamma_i^*$ are the topological cycles
corresponding to the base
\S \ref{sec:Gauss-Manin connection and horizontal section on the daul}
and \S \ref{subsec: Dual differential equation GM connection}.
We choose a local section $\gamma$ of $N^*_B$ on $[0,1]$ such
that the fiber of $\gamma(\overline{01})$ at $\overline{01}$
goes to
$c_{N^*}(\gamma(\overline{01}))=b_1^* \in N^*_{dR}$ via the comparison map $c_N$.
We apply Proposition 
Proposition \ref{proposition of convolution formula} 
by setting $u_0=b_1\otimes b_1^*\otimes b_1$

\subsubsection{Using Hochschild-Serre-Fubini theorem}
Using Hochschild-Serre-Funibi theorem, we have the following
lemma.
\begin{proposition}
\label{equality for generateing function and hypergeometric function}
\begin{align*}
c_E(J(\overline{10},\gamma))((b_1\otimes b^*_1\otimes b_1) \frac{dx}{x})
=&
 \int_{(0,1)}
F_{\Phi}(-a_1,-b_1;c_1-a_1-b_1+1;1-x)x^u \\
&
F_{\Phi}(a_2,b_2;c_2+1;x) 
\frac{dx}{x}.
\end{align*}
\end{proposition}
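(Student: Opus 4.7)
The plan is to apply the Hochschild--Serre--Fubini formalism (the spectral sequence from Section 4.2) to reduce the junction pairing at $y=\overline{10}$ to a single fiber integral in $x$, and then to compute the integrand pointwise via horizontal transport of the diagonal element $\Delta$ and the section $\gamma$.

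First, at $y=\overline{10}$ the junction cycle $J(\overline{10},\gamma)$ is a $1$-chain in the fiber $pr_2^{-1}(\overline{10})\simeq\Cal M_4$ whose principal part is $\delta(\gamma)$ supported on the $x$-interval $[0,1]$, together with the two boundary corrections at $\overline{01}$ and $\overline{10}$ prescribed by Definition \ref{definition of junction cycle}. Pairing this chain with the fiber form $(b_1\otimes b_1^*\otimes b_1)\frac{dx}{x}$ produces an integral $\int_{(0,1)}\langle\delta(\gamma)(x,\overline{10}),\,b_1\otimes b_1^*\otimes b_1\rangle\frac{dx}{x}$; the two boundary corrections, combined with the regularization rule of Proposition \ref{difference between limit and regularization}, account for the passage from the closed interval $[0,1]$ to the open interval $(0,1)$.

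Second, I would factor the integrand using the tensor-product structure of $\delta(\gamma)$. The $M_{\overline{10}}^*\otimes M_x$-piece is the horizontal transport of the identity $\Delta\in M_{\overline{10}}^*\otimes M_{\overline{10}}$ from $(\overline{10},\overline{10})$ to $(x,\overline{10})$, which by the identity $W_1(x)V_1(x)=D_1$ of (\ref{topological cycle givein exponential}) is given by the matrix product $W_1(x)D_1^{-1}$ in the first $M$-factor. Pairing with $b_1\otimes b_1^*$ picks off the $(1,1)$-entry of $W_1(x)$, and after dividing by the Beta factor built into $HM(a_1,b_1,c_1)$ this equals $F_\Phi(-a_1,-b_1;c_1-a_1-b_1+1;1-x)$; the twist $M\otimes x^{-u}$ contributes the factor $x^{u}$ in the dual. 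The $N_x^*$-piece is the horizontal transport of $\gamma$ from $\overline{01}$ to $x$; since the framing at $\overline{01}$ is $b_1^*$, pairing with $b_1$ extracts the $(1,1)$-entry of the $V_0$-matrix for the $N$-connection, which, after removing the $HM(a_2,b_2,c_2)$ Beta normalization, becomes $F_\Phi(a_2,b_2;c_2+1;x)$.

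The main technical obstacle is the careful matching of the Beta-function normalizations in the hypergeometric modules $HM(\cdot)$ with the definition of $F_\Phi$, together with the verification that the two boundary corrections in the junction cycle precisely cancel the potential divergences at $x=0$ and $x=1$ and yield the open-interval pairing $\int_{(0,1)}$. The remaining steps are direct applications of Proposition \ref{Phi HGF and associator} and Proposition \ref{dual initial data at 1}, which identify the relevant entries of the fundamental matrices with values of $F_\Phi$.
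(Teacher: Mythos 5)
Your plan is sound and its skeleton matches the paper's: both reduce the junction pairing by the Hochschild--Serre--Fubini mechanism to a single $\Phi$-integral over the $x$-fiber, and both ultimately rest on the duality (\ref{topological cycle givein exponential}) that underlies the diagonal element $\Delta$. Where you differ is in how the integrand is identified. You transport $\Delta\otimes\gamma$ horizontally and read off matrix entries of fundamental solutions, so that $F_{\Phi}(-a_1,-b_1;c_1-a_1-b_1+1;1-x)$ appears as the normalized $(1,1)$-entry of $W_1(x)$ (twisted by $x^{u}$) and $F_{\Phi}(a_2,b_2;c_2+1;x)$ as the normalized $(1,1)$-entry of $V_0(x)$ for the $N$-connection. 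The paper instead expands $\Delta$ in the twisted-cycle basis $\gamma_i\otimes\gamma_i^{*}$ with the $\bold s$-coefficients coming from (\ref{topological cycle givein exponential}), and writes the two resulting contributions $I_1,I_2$ as explicit Euler-type $\Phi$-integrals in auxiliary variables $t_0,t_1,t_2$ and $x$; integrating out $t_0$ at the tangential base point gives $I_2(\overline{10})=0$, while $I_1$ produces exactly the Gamma factors that cancel the $\bold B_{\Phi}$-normalizations built into $HM(a_1,b_1,c_1)$, $HM(a_2,b_2,c_2)$ and $F_{\Phi}$. What you label the main technical obstacle --- that only the $(1,1)$-entries survive and the constants cancel --- is precisely this step, and it is where the paper's proof does its actual work; in your formulation it amounts to checking that the component of $\Delta$ along $\gamma_2\otimes\gamma_2^{*}$ (equivalently, the second solution at $\overline{10}$, cf.\ Proposition \ref{dual initial data at 1}) pairs to zero with $b_1$ in the regularized limit, and then matching the surviving Beta constant against $D_1^{-1}$ and the module normalizations. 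Your treatment of the endpoint corrections via Proposition \ref{difference between limit and regularization}, explaining the passage from $[0,1]$ to the relative pairing over $(0,1)$, is consistent with the paper and in fact more explicit than what the paper states. So: same strategy, a different vehicle (matrix holonomy rather than explicit integral representations); to make it a complete proof you must still carry out the deferred vanishing-and-cancellation check, which is the exact analogue of the paper's computation of $I_1$ and $I_2$.
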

\begin{proof}
We compute the pairing 
$I_i=\<\widetilde{\gamma_i},b_1\otimes b_1^*\otimes b_1\frac{dx}{x}\>$ 
for $i=1,2$ at the fiber at $y=\overline{01}$.
\begin{align*}
&\widetilde{\gamma}=\{(t_1,t_2,t_3,x)\mid 
t_1,t_2,t_3 \in [0,1],x\in (0,1)\} 
\end{align*}
Then we have
\begin{align*}
I_1(y)=
&\frac{a_1\bold s(-a_1)\bold s(b_1-c_1)\Gamma(c_2+1)}
{\bold s(a_1+b_1-c_1)\Gamma(a_2,c_2-a_2+1)} \\
&\int_{\widetilde{\gamma}}
t_0^{a_1-1}(1-t_0)^{b_1-c_1-1}(1-(1-y)t_0)^{-b_1}y^{-u} \\
&t_1^{-a_1-1}(1-t_1)^{c_1-b_1}(1-(1-x)t_1)^{b_1}x^u \\
&t_2^{a_2-1}(1-t_2)^{c_2-a_2}(1-xt_2)^{-b_2} \frac{1}{x}dt_0dt_1dt_2dx
\end{align*}
\begin{align*}
I_2(y)=(\text{const.})\int_{\widetilde{\gamma}}
&(1-y)^{c_1-b_1-a_1+1}
t_0^{c_1-a_1-1}(1-t_0)^{-b_1}(1-(1-y)t_0)^{b_1-c_1-1}y^{-u} \\
&(1-x)^{-c_1+a_1+b_1}t_1^{a_1-c_1-1}(1-t_1)^{b_1}(1-(1-x)t_1)^{-b_1+c_1}x^u \\
&t_2^{a_2-1}(1-t_2)^{c_2-a_2}(1-xt_2)^{-b_2} \frac{1}{x}dt_0dt_1dt_2dx
\end{align*}
Therefore by integrating $t_0$ first, we have $I_2(\overline{10})=0$ and 
\begin{align*}
I_1(\overline{01})=&\frac{\Gamma(c_1-a_1-b_1+1,c_2+1)}
{\Gamma(-a_1,c_1-b_1+1,a_2,c_2-a_2+1)}  \\
&\int_{D}
t_1^{-a_1-1}(1-t_1)^{c_1-b_1}(1-(1-x)t_1)^{b_1}x^u \\
&t_2^{a_2-1}(1-t_2)^{c_2-a_2}(1-xt_2)^{-b_2} \frac{1}{x}dt_1dt_2dx,
\end{align*}
where
$
D=\{(t_1,t_2,x)\mid 
t_1,t_2 \in [0,1],x\in (0,1)\} 
$.

\end{proof}

\subsection{A classical integral formula}
To compute the integral of Prposition \ref{equality for 
generateing function and hypergeometric function},
we need an integral formula for assiciators
in Theorem \ref{main identity for 4F3}.
Before proving the integral formula 
(\ref{key integral formula})
for associators,
we recall a proof of the corresponding classical integral formula.
\begin{lemma}
\label{lemma Gauss HGF and integral}
\begin{enumerate}
\item
\begin{align*}
&\int_{[0,1]^2}s_1^{b_2-a_1-1}(1-s_1)^{a_1-1}
\ _2F_1(a_2,a_3;a_1;1-s_1)
ds_1 
\\
=&
\frac{\Gamma(a_1)\Gamma(b_2-a_1)\Gamma(b_2-a_2-a_3)}
{\Gamma(b_2-a_2)\Gamma(b_2-a_3)}.
\end{align*}
\item
\begin{align*}
&\int_{[0,1]}\ _2F_1(p_1,p_2;q_1,us) 
s^{b_2-a_1-1}(1-s)^{a_1-1}
\ _2F_1(a_2,a_3;a_1;1-s)ds
\\
=&
\frac{\Gamma(a_1)\Gamma(b_2-a_1)\Gamma(b_2-a_2-a_3)}
{\Gamma(b_2-a_2)\Gamma(b_2-a_3)} \\
& \ _4F_3(p_1,p_2,b_2-a_1,b_2-a_2-a_3;
q_1,b_2-a_2,b_2-a_3;u)
\end{align*}
\end{enumerate}
\end{lemma}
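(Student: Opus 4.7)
The plan is to prove both parts by direct series manipulations, using only the Euler Beta integral and Gauss's summation formula
\[
{}_2F_1(a,b;c;1)=\frac{\Gamma(c)\Gamma(c-a-b)}{\Gamma(c-a)\Gamma(c-b)}.
\]
Part (2) will be reduced immediately to Part (1), so the real content is in Part (1).

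For Part (1), I would first change variables $u=1-s_1$ so the integrand becomes $u^{a_1-1}(1-u)^{b_2-a_1-1}\,{}_2F_1(a_2,a_3;a_1;u)$ on $[0,1]$. Then I would insert the power series
\[
{}_2F_1(a_2,a_3;a_1;u)=\sum_{n=0}^{\infty}\frac{(a_2)_n(a_3)_n}{(a_1)_n\,n!}u^n,
\]
interchange summation and integration (justified by absolute convergence in the stable range of the parameters; the result extends analytically), and evaluate each term with the Beta integral
\[
\int_0^1 u^{a_1+n-1}(1-u)^{b_2-a_1-1}\,du=\frac{\Gamma(a_1+n)\Gamma(b_2-a_1)}{\Gamma(b_2+n)}.
\]
Using the Pochhammer identity $\Gamma(a_1+n)=(a_1)_n\Gamma(a_1)$, the factor $(a_1)_n$ cancels, leaving
\[
\frac{\Gamma(a_1)\Gamma(b_2-a_1)}{\Gamma(b_2)}\sum_{n=0}^{\infty}\frac{(a_2)_n(a_3)_n}{(b_2)_n\,n!}=\frac{\Gamma(a_1)\Gamma(b_2-a_1)}{\Gamma(b_2)}{}_2F_1(a_2,a_3;b_2;1).
\]
Applying Gauss's summation formula then produces the stated right-hand side.

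For Part (2), I would expand the extra hypergeometric factor as
\[
{}_2F_1(p_1,p_2;q_1;us)=\sum_{n=0}^{\infty}\frac{(p_1)_n(p_2)_n}{(q_1)_n\,n!}u^n s^n,
\]
pull the sum outside the $s$-integral, and observe that for each $n$ the remaining integral is precisely the one in Part (1) but with $b_2$ replaced by $b_2+n$ (since $s^{b_2-a_1-1}s^n=s^{(b_2+n)-a_1-1}$). Substituting the closed form from Part (1), shifting arguments of Gamma into Pochhammer symbols $(b_2-a_1)_n$, $(b_2-a_2-a_3)_n$, $(b_2-a_2)_n$, $(b_2-a_3)_n$, and factoring out the common Gamma-quotient, the remaining sum assembles exactly into the ${}_4F_3$ series on the right.

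I do not expect any real obstacle here: the argument is formal and classical, and every step either expands a hypergeometric series, applies Fubini, or invokes the Beta integral or Gauss's formula. The only care needed is the convergence regime for interchanging sum and integral, which I would handle by working first in a domain where $\mathrm{Re}(a_1)>0$, $\mathrm{Re}(b_2-a_1)>0$, $\mathrm{Re}(b_2-a_2-a_3)>0$, and $|u|<1$, and then extending meromorphically.
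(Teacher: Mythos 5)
Your proof is correct and is essentially the paper's argument: part (2) is handled identically (expand ${}_2F_1(p_1,p_2;q_1;us)$, integrate termwise via part (1) with $b_2\mapsto b_2+n$, reassemble the ${}_4F_3$), and both proofs of part (1) rest on the Beta/Euler integral plus Gauss's summation theorem. The only cosmetic difference is that the paper writes the integral in part (1) as an Euler double integral and identifies it as ${}_3F_2(a_1,a_2,a_3;a_1,b_2;1)$, whose repeated parameter $a_1$ cancels, whereas you perform the same cancellation termwise through the series expansion and the Beta integral.
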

\begin{proof}
(1)
\begin{align*}
& 
\frac{\Gamma(b_2)}{\Gamma(a_1)\Gamma(b_2-a_1)}
\int_{[0,1]^2}s_1^{a_1-1}(1-s_1)^{b_2-a_1-1}
\ _2F_1(a_2,a_3;a_1;s_1)
ds_1 \\
=& 
\frac{\Gamma(b_2)}{\Gamma(a_1)\Gamma(b_2-a_1)}
\frac{\Gamma(a_1)}{\Gamma(a_2)\Gamma(a_1-a_2)}
\\
&\int_{[0,1]^2}s_1^{a_1-1}(1-s_1)^{b_2-a_1-1}s_2^{a_2-1}(1-s_2)^{a_1-a_2-1}
(1-s_1s_2)^{-a_3}ds_1ds_2 \\
=&\ _3F_2(a_1,a_2,a_3;b_2,a_1;1) =\ _3F_2(a_1,a_2,a_3;a_1,b_2;1) 
\\
=&\ _2F_1(a_2,a_3;b_2;1)=\frac{\Gamma(b_2)\Gamma(b_2-a_2-a_3)}{\Gamma(b_2-a_2)\Gamma(b_2-a_3)} 
\end{align*}

(2)
By the integral expression of hypergeometric function, we have
\begin{align*}
&\int_{[0,1]}\ _2F_1(p_1,p_2;q_1,us) 
s^{b_2-a_1-1}(1-s)^{a_1-1}
\ _2F_1(a_2,a_3;a_1;1-s)ds
\\
=&\sum_m\frac{(p_1)_m(p_2)_m}{(q_1)_m}u^m
s^{m+b_2-a_1-1}(1-s)^{a_1-1}
\ _2F_1(a_2,a_3;a_1;1-s)ds.
\end{align*}
By (1),
the above is equal to 
\begin{align*}
&\sum_m\frac{(p_1)_m(p_2)_m}{(q_1)_m}u^m
\frac{\Gamma(a_1)\Gamma(m+b_2-a_1)\Gamma(m+b_2-a_2-a_3)}
{\Gamma(m+b_2-a_2)\Gamma(m+b_2-a_3)}
\\
=&
\frac{\Gamma(a_1)\Gamma(b_2-a_1)\Gamma(b_2-a_2-a_3)}
{\Gamma(b_2-a_2)\Gamma(b_2-a_3)}
\sum_m\frac{(p_1)_m(p_2)_m(b_2-a_1)_m(b_2-a_2-a_3)_m}
{(q_1)_m(b_2-a_2)_m(b_2-a_3)_m}u^m
\\
=&
\frac{\Gamma(a_1)\Gamma(b_2-a_1)\Gamma(b_2-a_2-a_3)}
{\Gamma(b_2-a_2)\Gamma(b_2-a_3)} \\
& \ _4F_3(p_1,p_2,b_2-a_1,b_2-a_2-a_3;
q_1,b_2-a_2,b_2-a_3;u).
\end{align*}
\end{proof}
\subsection{Zagier's generating function for associators}

\subsubsection{An integral formula for associators}
Let $u_1,u_2,u_3$ be the distinguished coordinate of $\Cal M_6$. 
We define admissible functions $s_1,s_2,s_3,x_1,x_2,x_3$ by
\begin{align}
\label{admissible functions of M7}
&x_1=\frac{(u_1-1)(u_2-u_3)}{(u_1-u_3)(u_2-1)},\quad
x_2=\frac{(u_2-0)(u_1-u_3)}{(u_2-u_3)(u_1-0)},\quad
x_3=\frac{(u_1-u_3)(\infty-u_2)}{(u_1-u_2)(\infty-u_3)}, \\
\nonumber
&s_1=\frac{(u_1-1)(\infty-0)}{(u_1-0)(\infty-1)},
\quad s_2=\frac{(u_2-0)(\infty-1)}{(u_2-1)(\infty-0)},\quad 
s_3=\frac{(0-u_3)(\infty-1)}
{(0-1)(\infty-u_3)}.
\end{align}
Then we have
\begin{align*}
&
1-s_1=\frac{1}{u_1}=\frac{x_3^*(1-x_2^*x_3^*)}{1-x_1^*x_3},
\quad 1-x_1x_2=1-s_1s_2=
\frac{(u_2-u_1)(0-1)}{(u_2-1)(0-u_1)}
\end{align*}
and
\begin{align*}
\frac{x_1}{x_1^*x_3^*}=\frac{s_1}{s_1^*s_3^*},\quad
\frac{x_2}{x_2^*x_3}=\frac{s_2}{s_2^*s_3},\quad
x_1^*x_3&=s_2^*s_3^*,\quad
x_2^*x_3^*=s_1^*s_3.
\end{align*}
As a consequence, we have
\begin{align}
\label{correspondence inducing symmetry}
&w_1^{p_1}(1-w_1)^{q_1-p_1}(1-\frac{w_1}{u_1})^{-p_2} s_1^{a_1}(1-s_1)^{b_2-a_1}
 s_2^{a_2}(1-s_2)^{b_1-a_2} \\
\nonumber
&s_3^{b_2-a_2}(1-s_3)^{b_1-a_1}
(1-s_1s_2)^{-a_3}
\\
\nonumber
=&w_1^{p_1}(1-w_1)^{q_1-p_1}(
1-\frac{w_1}{u_1})^{-p_2} (1-x_1)^{b_1-a_1}x_1^{a_1}
(1-x_2)^{b_2-a_2}x_2^{a_2} 
\\
\nonumber
&(1-x_3)^{b_2-a_1}x_3^{b_1-a_2} 
(1-x_1x_2)^{-a_3}
\end{align}
and
$$
\frac{dx_1}{x_1}
\frac{dx_2}{x_2}
dx_3=
\frac{ds_1}{s_1}
\frac{ds_2}{s_2}
ds_3
$$

The main theorem in this section is the following:
\begin{theorem}
\label{main identity for 4F3}
\begin{enumerate}
\item
\begin{align}
\label{key integral formula}
&\int_{[0,1]}^{\Phi} F_{\Phi}(p_1,p_2;q_1,s) 
s^{b_2-a_1-1}(1-s)^{a_1-1}
F_{\Phi}(a_2,a_3;a_1;1-s)ds
\\
\nonumber
=&
\frac{\Gamma_{\Phi}(a_1,b_2-a_1,b_2-a_2-a_3)}
{\Gamma_{\Phi}(b_2-a_2,b_2-a_3)} \\
\nonumber
& F_{\Phi}(p_1,p_2,b_2-a_1,b_2-a_2-a_3;
q_1,b_2-a_2,b_2-a_3;1)
\end{align}
\item
\begin{align*}
&\int_{(0,1)}^{\Phi} 
F_{\Phi}(a_2,a_3;1;1-s)
s^{b_2-1}
F_{\Phi}(p_1,p_2;q_1+1,s)
ds
\\
=&\frac{\Gamma_{\Phi}(b_2,b_2+1-a_2-a_3)}
{\Gamma_{\Phi}(b_2+1-a_2,b_2+1-a_3)} 
\\
& \times F_{\Phi}(p_1,p_2,b_2,b_2+1-a_2-a_3;
q_1+1,b_2+1-a_2,b_2+1-a_3;1) 
\\
&-\frac{
\Gamma_{\Phi}(1-a_2-a_3)}
{b_1\Gamma_{\Phi}(1-a_2,1-a_3)} 
\end{align*}
\end{enumerate}
\end{theorem}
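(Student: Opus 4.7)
The plan is to realize this theorem as the $\Phi$-integral analog of Lemma~\ref{lemma Gauss HGF and integral}(2), lifting the classical proof geometrically onto a cell of $\Cal M_{0,6}$. Since power-series expansion in $\Phi$-valued coefficients is unavailable, I replace the ``expand the outer $_2F_1$'' step of the classical argument by a change of order of $\Phi$-integration, driven by the admissible-coordinate symmetry packaged in (\ref{correspondence inducing symmetry}). Both $s_1,s_2,s_3$ and $x_1,x_2,x_3$ of (\ref{admissible functions of M7}) are admissible on $\Cal M_{0,6}$, so each set defines a morphism to $\Cal M_{0,4}$, and switching between them is legitimate at the level of $\Cal A^{\Phi}_{0,6}$-modules.

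For part~(1), I first substitute the integral representations of $F_{\Phi}(p_1,p_2;q_1,s)$ and $F_{\Phi}(a_2,a_3;a_1;1-s)$ from the definition of $F_{\Phi}$, converting the LHS into an iterated $\Phi$-integral on a cell of a fiber of $\Cal M_{0,6}\to \Cal M_{0,4}$ with integration variables $(w_1,s_1,s_2,s_3)$. Matching exponents, the integrand coincides with the LHS of (\ref{correspondence inducing symmetry}), together with the explicit factor $w_1^{p_1}(1-w_1)^{q_1-p_1}(1-w_1/u_1)^{-p_2}$ inherited from the outer hypergeometric integral. Applying the identity (\ref{correspondence inducing symmetry}) together with the volume-form equality $\frac{ds_1}{s_1}\frac{ds_2}{s_2}ds_3=\frac{dx_1}{x_1}\frac{dx_2}{x_2}dx_3$ rewrites the integrand in the $(x_1,x_2,x_3)$ coordinates, and the factor $(1-x_1x_2)^{-a_3}$ is now free of $x_3$.

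I then integrate in the order $x_3\to x_2\to x_1$, justified by the Hochschild-Serre-Leray spectral sequence for the tower $\Cal M_{0,6}\to\Cal M_{0,5}\to\Cal M_{0,4}$. The $x_3$ integration separates completely and evaluates, via Theorem~\ref{one cocycle relation on phi beta} and the definition of $\Gamma_{\Phi}$, to the prefactor $\Gamma_{\Phi}(a_1,b_2-a_1,b_2-a_2-a_3)/\Gamma_{\Phi}(b_2-a_2,b_2-a_3)$. The residual integration in $w_1,x_1,x_2$ is precisely the $\Phi$-integral representation of $F_{\Phi}(p_1,p_2,b_2-a_1,b_2-a_2-a_3;q_1,b_2-a_2,b_2-a_3;1)$, which closes~(1).

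Part~(2) differs from part~(1) only in that the integration is over the relative chain $(0,1)$ modulo $\{\overline{01}\cup\overline{10}\}$ rather than $[0,1]$. Applying the same argument with the shifted parameters $b_2,q_1+1$ produces the leading $\Gamma_{\Phi}\cdot F_{\Phi}$ term. The extra correction $-\Gamma_{\Phi}(1-a_2-a_3)/\bigl(b_1\Gamma_{\Phi}(1-a_2,1-a_3)\bigr)$ is the boundary discrepancy $\bigl((R_i+\alpha I_M)^{-1}v(\overline{01}),u_i\bigr)$ predicted by Proposition~\ref{difference between limit and regularization}, evaluated on the relevant hypergeometric module. The main obstacle is twofold: ensuring that the descent under (\ref{correspondence inducing symmetry}) is valid at the level of $\Cal A^{\Phi}$-modules (this follows from the admissibility of $s_i, x_i$ together with the descent proposition for $\Cal A^{\Phi}_4$-modules), and explicitly identifying the boundary operator in part~(2) so that the correction term comes out as exactly the claimed quotient of $\Phi$-Gamma values. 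The former is essentially formal; the latter requires careful tracking of exponents along the endpoint $\overline{01}$ of the $s$-integration.
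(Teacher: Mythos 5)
Your overall direction is the right one (lift the identity to a cell in a moduli space and exploit the coordinate symmetry (\ref{correspondence inducing symmetry}) instead of a series expansion), but two of your concrete steps fail. First, the exponent matching does not work as claimed: substituting the Euler representations of $F_{\Phi}(p_1,p_2;q_1,s)$ and $F_{\Phi}(a_2,a_3;a_1;1-s)$ produces only a three-fold integral in $(w_1,s_1,s_2)$, with no variable $s_3$ and no parameter $b_1$ anywhere, whereas the integrand of (\ref{correspondence inducing symmetry}) contains the extra factor $s_3^{b_2-a_2}(1-s_3)^{b_1-a_1}$ (and, on the other side, $(1-x_1)^{b_1-a_1}x_3^{b_1-a_2}(1-x_3)^{b_2-a_1}$). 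The missing idea is the auxiliary-parameter regularization that the argument actually hinges on: one must introduce the fourth variable and the parameter $b_1$ (so the integral lives on a cell of $\Cal M_{7}$ with coordinates $w_1,u_1,u_2,u_3$, not $\Cal M_{0,6}$), multiply by $(b_1-a_1)$, and take the limit $b_1\to a_1$ in the sense of Proposition \ref{limit tends to delta function}. In the $s$-coordinates this limit concentrates on $s_3=1$ and kills the auxiliary integration, yielding exactly your three-fold integral, i.e.\ the left side of (1); in the $x$-coordinates the same limit concentrates on $x_1=1$, where $\frac{1}{u_1}=\frac{x_3^*(1-x_2^*x_3^*)}{1-x_1^*x_3}$ degenerates to $x_2^*x_3^*$, and the surviving three-fold integral over $(w_1,x_2,x_3)$ is the Euler representation of $F_{\Phi}(p_1,p_2,b_2-a_1,b_2-a_2-a_3;q_1,b_2-a_2,b_2-a_3;1)$. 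The identity arises because the two limits land on different boundary faces, not because one can change the order of integration in a single three-fold integral.

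Second, your claim that ``the $x_3$ integration separates completely'' is false: the factor $(1-\frac{w_1}{u_1})^{-p_2}$ couples $w_1$ to all three of $x_1,x_2,x_3$ through the expression for $1/u_1$ above, so no Beta/cocycle factor in $x_3$ can be peeled off before the limit; moreover the Gamma prefactor on the right side comes from the $B_{\Phi}$-normalizations built into $F_{\Phi}$, not from a separated $x_3$-integral, and the residual ${}_4F_3$ is an integral in $(w_1,x_2,x_3)$, not $(w_1,x_1,x_2)$. Your treatment of part (2) --- specialize $q_1\mapsto q_1+1$, $a_1\mapsto 1$ and account for the discrepancy between the regularized cycle $[0,1]$ and the relative cycle $(0,1)$ via Proposition \ref{difference between limit and regularization} --- is indeed the correct mechanism and matches the intended argument, but as written it rests on your version of part (1), so the gap above has to be repaired first.
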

\begin{proof}
Let $w_1,u_1,u_2,u_3$ be the distinguished coordinate of $\Cal M_7$.
We consider admissible functions 
$s_1,s_2,s_3,x_1,x_2,x_3$ 
on $u_1,u_2,u_3$ define in 
(\ref{admissible functions of M7}).
Using the relation, 
(\ref{correspondence inducing symmetry}),
we have the following equatlity:
\begin{align}
&
\label{S6 first integral}
\int_{[0,1]}^{\Phi}
w_1^{p_1}(1-w_1)^{q_1-p_1-1}
(1-\frac{w_1}{u_1})^{-p_2}s_1^{a_1}(1-s_1)^{b_2-a_1-1}
\\ 
\nonumber
&
 s_2^{a_2}(1-s_2)^{b_1-a_2-1} 
s_3^{b_2-a_2-1}(1-s_3)^{b_1-a_1-1}
(1-s_1s_2)^{-a_3}ds_3\frac{ds_1ds_2dw_1}{s_1s_2w_1} \\
\label{S6 second integral}
=&\int_{[0,1]}^{\Phi}
w_1^{p_1}(1-w_1)^{q_1-p_1-1}(1-\frac{w_1}{u_1})^{-p_2}(1-x_1)^{b_1-a_1-1}x_1^{a_1}
\\ 
\nonumber
&
(1-x_2)^{b_2-a_2-1}x_2^{a_2}
(1-x_3)^{b_2-a_1-1} x_3^{b_1-a_2-1} 
(1-x_1x_2)^{-a_3}\frac{dx_1dx_2dx_3dw_1}{x_1x_2w_1}
\end{align}
We multiply $(b_1-a_1+1)$ with
(\ref{S6 first integral}) and
(\ref{S6 second integral})
and take a limit where $b_1$ tends to $a_1-1$.
Using
$
\lim_{b_1\to a_1}
\displaystyle (b_1-a_1)B_{\Phi}(b_2-a_2,b_1-a_1)=1,
$
and $\frac{1}{u_1}=1-s_1$,
the limit of (\ref{S6 first integral}) is equal to
\begin{align}
\label{first limit}
&\lim_{b_1\to a_1}(b_1-a_1)\int_{[0,1]}^{\Phi}
w_1^{p_1}(1-w_1)^{q_1-p_1-1}(1-\frac{w_1}{u_1})^{-p_2}
\\ 
\nonumber
&s_1^{a_1}(1-s_1)^{b_2-a_1-1}
 s_2^{a_2}(1-s_2)^{b_1-a_2-1} s_3^{b_2-a_2-1}(1-s_3)^{b_1-a_1-1}
(1-s_1s_2)^{-a_3}ds_3\frac{ds_1ds_2dw_1}{s_1s_2w_1} 
\\
\nonumber
=&\int_{[0,1]}^{\Phi}
w_1^{p_1}(1-w_1)^{q_1-p_1-1}(1-w_1(1-s_1))^{-p_2}
\\ 
\nonumber
&s_1^{a_1}(1-s_1)^{b_2-a_1-1}
 s_2^{a_2}(1-s_2)^{a_1-a_2-1} 
(1-s_1s_2)^{-a_3}\frac{ds_1ds_2dw_1}{s_1s_2w_1} 
\\
\nonumber
=&
\frac{\Gamma_{\Phi}(p_1,q_1-p_1,a_2,a_1-a_2)}
{\Gamma_{\Phi}(q_1,a_1)}
\\
\nonumber
&\int_{[0,1]}^{\Phi}F_{\Phi}(p_1,p_2;q_1,1-s_1) 
s_1^{a_1-1}(1-s_1)^{b_2-a_1-1}
F_{\Phi}(a_2,a_3;a_1;s_1)ds_1 \\
\nonumber
=&
\frac{\Gamma_{\Phi}(p_1,q_1-p_1,a_2,a_1-a_2)}
{\Gamma_{\Phi}(q_1,a_1)}
\\
\nonumber
&\int_{[0,1]}^{\Phi}F_{\Phi}(p_1,p_2;q_1,s) 
s^{b_2-a_1-1}(1-s)^{a_1-1}
F_{\Phi}(a_2,a_3;a_1;1-s)ds
\end{align}
We compute the limit of (\ref{S6 second integral})
using Proposition \ref{limit tends to delta function}.
Since $\lim_{x_1\to 1}\frac{1}{u_1}=x_2^*x_3^*$, 
we have
\begin{align}
\label{second limit}
&\lim_{b_1\to a_1}(b_1-a_1)\int_{[0,1]}^{\Phi}
w_1^{p_1}(1-w_1)^{q_1-p_1-1}(1-\frac{w_1}{u_1})^{-p_2}
(1-x_1)^{b_1-a_1-1}x_1^{a_1}
\\ 
\nonumber
&
(1-x_2)^{b_2-a_2-1}x_2^{a_2}
(1-x_3)^{b_2-a_1-1}x_3^{b_1-a_2-1} 
(1-x_1x_2)^{-a_3}\frac{dx_1dx_2dx_3dw_1}{x_1x_2w_1}
\\
\nonumber
=&\int_{[0,1]}^{\Phi}
w_1^{p_1}(1-w_1)^{q_1-p_1-1}(1-w_1x_2^*x_3^*)^{-p_2}
\\ 
\nonumber
&
(1-x_2)^{b_2-a_2-a_3-1}x_2^{a_2}
(1-x_3)^{b_2-a_1-1}x_3^{a_1-a_2} 
\frac{dx_2dx_3dw_1}{x_2x_3w_1} 
\\
\nonumber
=&\int_{[0,1]}^{\Phi}
w_1^{p_1}(1-w_1)^{q_1-p_1-1}(1-w_1x_2x_3)^{-p_2}
\\ 
\nonumber
&
x_2^{b_2-a_2-a_3}(1-x_2)^{a_2-1}
x_3^{b_2-a_1}(1-x_3)^{a_1-a_2-1} 
\frac{dx_2dx_3dw_1}{x_2x_3w_1} 
\\
\nonumber
=&
\frac{\Gamma_{\Phi}(p_1,q_1-p_1,b_2-a_2-a_3,a_2,b_2-a_1,a_1-a_2)}
{\Gamma_{\Phi}(q_1,b_2-a_3,b_2-a_2)} \\
\nonumber
& F_{\Phi}(p_1,b_2-a_2-a_3,b_2-a_1,p_2;
q_1,b_2-a_3,b_2-a_2;1)
\end{align}
By comparing two limits
(\ref{first limit}) and (\ref{second limit}),
we have the theorem.

(2) By replacing $q_1$ and $a_1$ by $q_1+1$ and $1$,
we have
\begin{align*}
&\int_{[0,1]}^{\Phi} 
F_{\Phi}(a_2,a_3;1;1-s)
s^{b_2-1}
F_{\Phi}(p_1,p_2;q_1+1,s)
ds
\\
=&\frac{\Gamma_{\Phi}(b_2,b_2+1-a_2-a_3)}
{\Gamma_{\Phi}(b_2+1-a_2,b_2+1-a_3)} 
\\
& F_{\Phi}(p_1,p_2,b_2,b_2+1-a_2-a_3;
q_1+1,b_2+1-a_2,b_2+1-a_3;1)
\end{align*}
By Proposition \ref{difference between limit and regularization}, 
we have the statement (2).
\end{proof}
As a corollary, we have the following corollary.
\begin{corollary}
Let $P, Q$ be matrices in
(\ref{P twisted and Q}) evaluated at $c_1=0$.
Then we have

\begin{align*}
&\sum_{J_1,J_2,\deg(J_2)>0}c_{\Phi,J_10J_2}
(b_1,P_{J_1}b_1^*b_1Q_{J_2}b_1^*))
\\
=&\frac{\Gamma_{\Phi}(u,u+1-a_1-b_1)}
{\Gamma_{\Phi}(u+1-a_1,u+1-b_1)} 
\\
& \bigg[F_{\Phi}(a_2,b_2,u,u+1-a_1-b_1;
c_2+1,u+1-a_1,u+1-b_1;1)-1 \bigg]
\end{align*}
\end{corollary}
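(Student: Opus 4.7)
The plan is to combine Proposition \ref{proposition of convolution formula} with Proposition \ref{equality for generateing function and hypergeometric function} to express the left-hand side as a concrete $\Phi$-integral, and then apply Theorem \ref{main identity for 4F3}(2) to evaluate that integral.

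First I would specialise Proposition \ref{proposition of convolution formula} to the setting of the junction for hypergeometric modules. Taking $u_0 = b_1\otimes b_1^*\otimes b_1$ together with the horizontal section $\gamma$ of $N^*_B$ satisfying $c_{N^*}(\gamma(\overline{01}))=b_1^*$ (so $v^*=b_1^*$), the composition $(P_{J_1}\otimes 1\otimes 1)Ev(Q_{J_2}b_1^*)$ paired against $u_0$ collapses to the scalar product $(b_1,P_{J_1}b_1^*)(b_1,Q_{J_2}b_1^*)$. Summing over all pairs $(J_1,J_2)$ without any restriction on $\deg(J_2)$ therefore gives $c_E(J(\overline{10},\gamma))((b_1\otimes b_1^*\otimes b_1)\tfrac{dx}{x})$, which by Proposition \ref{equality for generateing function and hypergeometric function} equals
\begin{equation*}
\int_{(0,1)}^{\Phi} F_{\Phi}(-a_1,-b_1;c_1-a_1-b_1+1;1-x)\,x^u\,F_{\Phi}(a_2,b_2;c_2+1;x)\,\frac{dx}{x}.
\end{equation*}
Specialising to $c_1=0$ places this integral in exactly the form handled by Theorem \ref{main identity for 4F3}(2), with the parameter matching $(p_1,p_2;q_1{+}1)\leftrightarrow(a_2,b_2;c_2{+}1)$ and $b_2\leftrightarrow u$, and the remaining parameters determined by the identification of the $F_{\Phi}(\cdot,\cdot;\cdot;1-x)$ factors.

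Next I would isolate the contribution from $J_2=\emptyset$. Since $Q_{\emptyset}b_1^*=b_1^*$ and $(b_1,b_1^*)=1$, this contribution equals $\sum_{J_1}c_{\Phi,J_10}(b_1,P_{J_1}b_1^*)$; on the analytic side it corresponds to replacing the factor $F_{\Phi}(a_2,b_2;c_2+1;x)$ by its constant term $1$. Subtracting this contribution from both sides of the identity obtained in the previous step replaces the $F_{\Phi}$ on the right side of Theorem \ref{main identity for 4F3}(2) by $F_{\Phi}(\ldots)-1$, and at the same time absorbs the boundary term $-\Gamma_{\Phi}(1-a_2-a_3)/(b_1\,\Gamma_{\Phi}(1-a_2,1-a_3))$ arising from the regularisation recipe of Proposition \ref{difference between limit and regularization}. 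Collecting the remaining Gamma prefactor under the above parameter substitution yields precisely $\Gamma_{\Phi}(u,u+1-a_1-b_1)/\Gamma_{\Phi}(u+1-a_1,u+1-b_1)$.

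The main obstacle will be the careful bookkeeping of the $J_2=\emptyset$ piece against the boundary correction produced by regularised integration over $(0,1)$: one has to check that the arithmetic \emph{$J_2=\emptyset$ tail} on the left and the analytic \emph{boundary tail} on the right of Theorem \ref{main identity for 4F3}(2) cancel against each other up to the constant $1$ that ends up inside the bracket $[F_{\Phi}(\ldots)-1]$. Once this cancellation is verified and the Gamma factors are reassembled under $c_1=0$, the identity of the corollary follows at once.
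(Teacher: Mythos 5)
Your proposal follows exactly the route the paper intends: the corollary is stated with no separate proof, as the immediate combination of Proposition \ref{proposition of convolution formula} (with $u_0=b_1\otimes b_1^*\otimes b_1$ and $v^*=c_{N^*}(\gamma(\overline{01}))=b_1^*$, so each summand collapses to the product of matrix coefficients $(b_1,P_{J_1}b_1^*)(b_1,Q_{J_2}b_1^*)$), Proposition \ref{equality for generateing function and hypergeometric function}, and Theorem \ref{main identity for 4F3}(2) evaluated at $c_1=0$ with $(p_1,p_2;q_1+1)\leftrightarrow(a_2,b_2;c_2+1)$ and $b_2\leftrightarrow u$, the restriction $\deg(J_2)>0$ accounting for the $-1$ inside the bracket. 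The bookkeeping you flag at the end --- matching the $J_2=\emptyset$ contribution against the boundary term of Theorem \ref{main identity for 4F3}(2) via Proposition \ref{difference between limit and regularization}, and reconciling the parameters of the factor $F_{\Phi}(-a_1,-b_1;1-a_1-b_1;1-x)$ with the form required by the theorem --- is likewise left implicit in the paper, so your reconstruction is essentially the paper's own argument.
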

By setting $a_2=a, b_2=-a, a_1=-b, b_1=b$ and
taking the limit for $u\to 0$,
we have the following theorem.
\begin{theorem}
\label{first identity by Zagier}
We have the following equality:
\begin{align*}
\sum_{n\geq 0,m>0}c_{\Phi,(01)^n0(01)^m}b^{2n+1}a^{2m}=\bold s(b)
\frac{d}{dz}\mid_{z=0}F_{\Phi}(a,-a,z;1+b,1-b;1).
\end{align*}
\end{theorem}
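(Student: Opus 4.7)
The plan is to deduce the identity by applying the corollary just stated with the specialization $a_2=a$, $b_2=-a$, $a_1=-b$, $b_1=b$ (and $c_2=0$), and then passing to the limit $u\to 0$. I would proceed in three movements.

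\emph{Identifying the left-hand side.} Under the specialization, the matrices $P_0,P_1$ of (\ref{P twisted and Q}) become at $u=0$ of the form (\ref{connection matrix 2xn}) with $a$ replaced by $-b$, and $Q_0,Q_1$ take the same form with parameter $a$. Proposition \ref{formal repetition of 0,1} then forces the scalar $(b_1,P_{J_1}b_1^*\,b_1Q_{J_2}b_1^*)$ to vanish unless $J_1=(01)^n$ and $J_2=(01)^m$. In the non-vanishing case it equals $b^{2n+1}a^{2m}$ up to a determined sign: the even factor $b^{2n}a^{2m}$ arises directly from Proposition \ref{formal repetition of 0,1} applied to $P$ and $Q$, while the extra $b$ reflects the contribution of the middle letter $e_0$, which in the junction's connection matrix $R_0$ couples the $M\otimes M^*\otimes N$ summand to the $N$ summand through the off-diagonal $Ev$-block. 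Summing over $n\geq 0$ and $m\geq 1$ recovers the left-hand side of the theorem.

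\emph{Expanding the ${}_4F_3$ bracket.} At $u=0$ the upper parameter $u$ makes $(u)_k=0$ for every $k\geq 1$, so $F_\Phi(a,-a,u,u+1;1,u+1+b,u+1-b;1)-1$ has a simple zero at $u=0$. Using $\tfrac{d}{du}\big|_{u=0}(u)_k=(k-1)!$ and $(u+1)_k\big|_{u=0}=k!=(1)_k$, the $(1)_k$ in the denominator cancels and the linear $u$-coefficient of the bracket reduces termwise to
$$\sum_{k\geq 1}\frac{(a)_k(-a)_k(k-1)!}{(1+b)_k(1-b)_k\,k!} \;=\; \frac{d}{dz}\bigg|_{z=0}F_\Phi(a,-a,z;1+b,1-b;1),$$
the second equality being $\tfrac{d}{dz}\big|_{z=0}(z)_k=(k-1)!$ applied to each summand.

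\emph{Analyzing the $\Gamma_\Phi$-factor.} The definition $\Gamma_\Phi(x)=\tfrac{1}{x}\exp(\sum_{n\geq 2}a_n x^n)$ gives $\lim_{u\to 0}u\Gamma_\Phi(u)=1$, so $\Gamma_\Phi(u,u+1)/\Gamma_\Phi(u+1+b,u+1-b)$ has a simple pole at $u=0$. I would identify its residue with $\mathbf{s}(b)$ via a $\Phi$-Euler reflection formula, obtained by using the one-cocycle relation of Theorem \ref{one cocycle relation on phi beta} to interpret $\Gamma_\Phi$ at the shifted arguments $1\pm b$ as formal series and to show $\Gamma_\Phi(1)/\bigl(\Gamma_\Phi(1+b)\Gamma_\Phi(1-b)\bigr)=\mathbf{s}(b)$. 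Multiplying the $(1/u)$-pole against the simple zero of the bracket produces $\mathbf{s}(b)\cdot\tfrac{d}{dz}\big|_{z=0}F_\Phi(a,-a,z;1+b,1-b;1)$, matching the right-hand side.

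\emph{Main obstacle.} The subtle point is the $\Phi$-Euler reflection in the third movement: one has to make formal sense of $\Gamma_\Phi$ at arguments that are not infinitesimal, using only the cocycle identity for $B'_\Phi$, and verify that the divergent pieces assemble cleanly into $\mathbf{s}(b)$. The matrix-element bookkeeping of the first movement---in particular pinning down the extra factor of $b$ coming from the off-diagonal $Ev$-entry of $R_0$---is the secondary technical point but is essentially a calculation once the structure of the junction complex is unpacked.
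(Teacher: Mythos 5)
Your overall route is the paper's own: Theorem \ref{first identity by Zagier} is obtained by specializing the corollary of Theorem \ref{main identity for 4F3} at $a_2=a$, $b_2=-a$, $a_1=-b$, $b_1=b$, $c_1=c_2=0$ and letting $u\to 0$, and the paper itself gives no more detail than that one sentence. But two of your three movements do not hold up in the $\Phi$-setting. First, the termwise Pochhammer expansion of the ${}_4F_3$-bracket presumes that $F_{\Phi}(\dots;1)$ is given by the classical hypergeometric series; for a general associator $F_{\Phi}$ is defined only through $\Phi$-integrals and comparison maps, and the whole point of the paper is that series manipulations are unavailable (this is why Dixon and Selberg are reproved cohomologically). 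The confluence statement you actually need, that the $u$-derivative at $u=0$ of $F_{\Phi}(a,-a,u,u+1;1,u+1+b,u+1-b;1)$ equals $\frac{d}{dz}\mid_{z=0}F_{\Phi}(a,-a,z;1+b,1-b;1)$, is a contiguity relation that must itself be established from the cohomological definition (in the spirit of Proposition \ref{several relations for hypergeometric constatns}); it cannot be read off a series that $F_\Phi$ is not known to satisfy. Second, the proposed ``$\Phi$-Euler reflection formula'' $\Gamma_{\Phi}(1)/\bigl(\Gamma_{\Phi}(1+b)\Gamma_{\Phi}(1-b)\bigr)=\bold s(b)$ cannot be extracted from Theorem \ref{one cocycle relation on phi beta}: the cocycle relation holds for every associator and leaves the odd coefficients $a_{2k+1}$ completely free, whereas the claimed identity would constrain them; moreover $\Gamma_{\Phi}$ at the non-infinitesimal arguments $1\pm b$ is not even defined as a formal object (the exponent $\sum_n a_n(1\pm b)^n$ has no meaning in $\bold Q[[b]]$), and already in the classical case the ratio is $\sin(\pi b)/(\pi b)=\bold s(b)/b$, not $\bold s(b)$. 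In this paper the genuine sine factors never come from $\Gamma_{\Phi}$; they enter from the Betti side, which is associator-independent, via monodromy and intersection data as in (\ref{topological cycle givein exponential}) and the coefficients of $\Delta$ in the junction computation.

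Relatedly, your bookkeeping of the extra $b$ on the left is misplaced: the block $Ev$ in $R_0$ is the parameter-independent (co)evaluation and contributes no factor of $b$. At $c_1=c_2=u=0$ the specialized matrices give $(b_1,P_{(01)^n}b_1^*)(b_1,Q_{(01)^m}b_1^*)=b^{2n}a^{2m}$, so the specialized corollary produces $\sum c_{\Phi,(01)^n0(01)^m}b^{2n}a^{2m}$ against a prefactor whose classical value is $\bold s(b)/b$; the theorem as stated is this identity multiplied through by $b$ (the missing $b$ is most naturally traced to the Betti normalization of $\Delta$, which carries an overall factor $a_1=-b$, and the specialization is genuinely singular there since $\bold s(a_1+b_1-c_1)=\bold s(0)=0$, which is exactly what the twist by $x^{-u}$ regularizes). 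Your two misplaced factors of $b$ cancel, so the final formula comes out right, but neither intermediate claim is correct as stated, and the two gaps above --- the series expansion of $F_{\Phi}$ and the reflection formula for $\Gamma_{\Phi}$ --- are precisely the associator-dependent steps that a complete proof must supply or circumvent.
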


\section{Selberg integral and Dixon's theorem}

\subsection{$\Phi$-Selberg integral formula}
Let $x_1,x_2$ be the distinguished coordinate of $\Cal M_{5}$.
We define even and odd $\Phi$-Selberg integrals by
\begin{align*}
S_{\Phi}^+(a,b,c)
=
\int^{\Phi}_{0\leq x_2 \leq x_1\leq 1}(x_1x_2)^{a-1}((1-x_1)(1-x_2))^{b-1}
(x_1-x_2)^{2c}
dx_1dx_2 \\
S_{\Phi}^-(a,b,c)
=
\int^{\Phi}_{0\leq x_2 \leq x_1\leq 1}(x_1x_2)^{a-1}((1-x_1)(1-x_2))^{b-1}
(x_1-x_2)^{2c+1}
dx_1dx_2
\end{align*}

\subsubsection{Variant}
Following the idea of Aomoto (1984) and Lavoie-Grondin-Rathie-Arora (1994), 
we consider the followoing variant.
For a polynomial $f(x,y)$ of $x,y$, we set
$$
Sel_{\Phi}(f(x,y))_{a,b,c}
=
\int_{0\leq x \leq y\leq 1}^{\Phi}f(x,y)x^{a-1}y^{a-1}(1-x)^{b-1}(1-y)^{b-1}
(y-x)^{2c}dxdy
$$
The following lemma is direct from the defintion.
\begin{lemma}
\begin{enumerate}
\item
\begin{align*}
&Sel_{\Phi}(1)_{a,b,c}=S^+_{\Phi}(a,b,c), \\
&Sel_{\Phi}(y-x)_{a,b,c}=S^-_{\Phi}(a,b,c),
\end{align*}
\item
\begin{align*}
&Sel_{\Phi}(xyf(x,y))_{a,b,c}
=Sel_{\Phi}(f(x,y))_{a+1,b,c} \\
&Sel_{\Phi}((1-x)(1-y)f(x,y))_{a,b,c}
=Sel_{\Phi}(f(x,y))_{a,b+1,c}
\end{align*}
\end{enumerate}
\end{lemma}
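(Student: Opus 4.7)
The plan is to prove both parts by direct unpacking of the definitions, since no nontrivial manipulation of $\Phi$-integrals is required. The underlying observation is that the $\Phi$-integral $Sel_\Phi(f(x,y))_{a,b,c}$ is defined to be the $\Phi$-integral over $0\le x\le y\le 1$ of $f(x,y)$ against the weight $x^{a-1}y^{a-1}(1-x)^{b-1}(1-y)^{b-1}(y-x)^{2c}$, where $a,b,c$ are treated as formal parameters whose integer shifts correspond precisely to multiplication of the integrand by the appropriate monomials in $x,y,(1-x),(1-y),(y-x)$. Thus each of the four identities is an equality of $\Phi$-integrals whose integrands coincide on the nose after rewriting.

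For part (1), I would match up the integration variables: the definition of $S^+_\Phi(a,b,c)$ uses the domain $\{0\le x_2\le x_1\le 1\}$ with weight $(x_1x_2)^{a-1}((1-x_1)(1-x_2))^{b-1}(x_1-x_2)^{2c}$, while $Sel_\Phi(1)_{a,b,c}$ uses the domain $\{0\le x\le y\le 1\}$ with the same weight under the renaming $x=x_2$, $y=x_1$. This gives the first identity. For the second identity, inserting the polynomial factor $(y-x)$ into $Sel_\Phi$ multiplies the integrand by $(y-x)$, raising the exponent from $2c$ to $2c+1$, which is precisely the defining weight of $S^-_\Phi(a,b,c)$.

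For part (2), multiplying by $xy$ converts the weight factor $x^{a-1}y^{a-1}$ into $x^{a}y^{a}=x^{(a+1)-1}y^{(a+1)-1}$, so the integrand of $Sel_\Phi(xyf(x,y))_{a,b,c}$ agrees with that of $Sel_\Phi(f(x,y))_{a+1,b,c}$ termwise. The analogous statement for $(1-x)(1-y)$ shifts $b$ to $b+1$ in the same way. In both cases the $\Phi$-integral is the pairing between the regularized cycle on the standard simplex and the de Rham class of the differential form, and these pairings coincide because the underlying admissible functions and integration domain are unchanged.

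There is no real obstacle here: the only point worth noting is that one must interpret $x^{a-1}$ etc.\ as the admissible coefficient system $x^{a}\cdot x^{-1}\in \varphi^{a}\mathbf{Q}[[a]]\otimes \mathbf{Q}[x,x^{-1}]$, after which the identities are tautological consequences of the definition of $Sel_\Phi$. The statement indeed deserves its label ``direct from the definition.''
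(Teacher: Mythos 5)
Your proof is correct and matches the paper, which offers no argument at all beyond declaring the lemma ``direct from the definition'': under the renaming $x=x_2$, $y=x_1$ the weights coincide, the extra factor $(y-x)$ raises the exponent $2c$ to $2c+1$, and the factors $xy$ and $(1-x)(1-y)$ shift $a$ and $b$ by one. Your remark that the integer shift of exponents is absorbed into the twisted coefficient module $\varphi^{a}\bold Q[[a]]$ is exactly the point the paper leaves implicit (it is covered by its earlier proposition identifying $\prod_i\varphi_i^{\alpha_i}\bold Q[[\alpha_i]]$ with $\prod_j\psi_j^{L_j}\bold Q[[\alpha_i]]$), so nothing further is needed.
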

Using the equalities in the lemma, for any polynomial $f(x,y)$,
$Sel_{\Phi}(f(x,y))_{a,b,c}$ 
can be computed using Selberg integrals 
$S^+_{\Phi}(a,b,c)$ and $S^-_{\Phi}(a,b,c)$.

\subsection{Even Selberg integral}
In this subsection, we prove the following proposition.
\begin{proposition}[$\Phi^+$-Selberg integral formula]
\begin{align*}
S_{\Phi}^+(a,b,c)=
&\frac{
\Gamma_{\Phi}(a,b,a+c,b+c,2c)}
{\Gamma_{\Phi}(c,a+b+c,a+b+2c)}
\end{align*}
\end{proposition}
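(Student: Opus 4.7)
My plan is to follow Aomoto's method, as explicitly indicated by the authors' introduction of the variant $Sel_\Phi(f)_{a,b,c}$ and their citation of Aomoto (1984) and Lavoie--Grondin--Rathie--Arora (1994). In the classical setting, Aomoto proves Selberg's formula for $n=2$ by deriving a finite system of $\mathbf{Q}(a,b,c)$-linear relations among $Sel(f)_{a,b,c}$ for a small collection of polynomials $f$, obtained by integrating exact differential forms on $[0,1]^2$; solving this system pins down the $\Gamma$-ratio formula once a single normalization is fixed. The $\Phi$-cohomology machinery of Section~4, together with the regularization of Proposition~\ref{difference between limit and regularization}, is designed precisely to let exact-form arguments of this type lift verbatim to the associator setting, since in $\Phi$-cohomology exact forms vanish and the only subtlety is the regularization of boundary contributions.

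Concretely I would proceed as follows. First, realize $S_\Phi^+(a,b,c)$ as a two-dimensional $\Phi$-integral over the cell $D=\{0\le x_2\le x_1\le 1\}$ of $\mathcal{M}_{0,5}$ with the admissible $\Phi$-module $\mathcal{F}(a,b,c)=x_1^{a-1}x_2^{a-1}(1-x_1)^{b-1}(1-x_2)^{b-1}(x_1-x_2)^{2c}\,\mathbf{Q}[[a,b,c]]$. For each monomial $g(x_1,x_2)$ of small degree and each admissible one-form $\omega$, the exactness of $d(g\,\omega\cdot\mathcal{F})$ gives $\int_D^\Phi d(g\,\omega\cdot\mathcal{F})=\text{(boundary contributions)}$, where the boundary is the union of the coordinate divisors $\{x_i=0\}$, $\{x_i=1\}$, and the diagonal $\{x_1=x_2\}$, each contribution being regularized via Proposition~\ref{difference between limit and regularization}. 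By the lemma in this section, every resulting quantity is a $\mathbf{Q}[[a,b,c]]$-linear combination of $S_\Phi^+(a,b,c)$ and $S_\Phi^-(a,b,c)$, possibly with integer parameter shifts, which the $\Phi$-Beta one-cocycle relation of Theorem~\ref{one cocycle relation on phi beta} converts back to the original $(a,b,c)$. The rational coefficient matrix of the resulting linear system is the same as in Aomoto's classical argument (since these coefficients depend only on commutation rules of admissible forms, not on $\Phi$), so solving it produces $S_\Phi^+(a,b,c)$ as the predicted $\Gamma_\Phi$-ratio up to one multiplicative constant, which I fix by computing $\lim_{c\to 0}S_\Phi^+(a,b,c)=\tfrac{1}{2}B_\Phi(a,b)^2$ directly from $\Phi$-Fubini and the $(x_1\leftrightarrow x_2)$-symmetry of the integrand; this matches the claimed formula since $\lim_{c\to 0}\Gamma_\Phi(2c)/\Gamma_\Phi(c)=\tfrac{1}{2}$.

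The main obstacle is the boundary analysis in the exact-form step. Unlike $\Phi$-integrals on $\mathcal{M}_{0,4}$, the cell $D\subset\mathcal{M}_{0,5}$ has a boundary stratum along the diagonal $\{x_1=x_2\}$, which is not a coordinate divisor in the distinguished chart $(x_1,x_2)$; under the admissible change of variables $x_2=x_1 t$ this diagonal becomes the coordinate divisor $\{t=1\}$, after which Proposition~\ref{difference between limit and regularization} applies in the $t$-variable. Carefully collating contributions across all three boundary strata and verifying that they assemble into the same rational coefficients that appear in Aomoto's classical relations is the crux of the argument. Once this bookkeeping is established, the rest of the proof is formal linear algebra in the completed $\Gamma_\Phi$-ring.
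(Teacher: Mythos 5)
Your route is genuinely different from the paper's, and it has a gap at the decisive step. Aomoto's method produces contiguity relations: integrating exact forms over the cell $D$ yields $\mathbf{Q}(a,b,c)$-linear relations among $Sel_{\Phi}(f)_{a,b,c}$, and composing them gives the ratios $S^{+}_{\Phi}(a+1,b,c)/S^{+}_{\Phi}(a,b,c)$, etc., as explicit rational functions. In the classical proof this does \emph{not} yet pin down the function ``up to one multiplicative constant'': a first-order shift recursion determines $S^{+}$ only up to an arbitrary $1$-periodic factor, and Selberg/Aomoto remove that ambiguity by an analytic argument (asymptotics as $a\to\infty$ together with periodicity, or Carlson-type rigidity). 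That finishing step has no counterpart in the associator setting: here $a,b,c$ are formal variables, $S^{+}_{\Phi}(a+1,b,c)$ is a \emph{different} $\Phi$-integral (the insertion of $x_1x_2$), not a substitution in the power series $S^{+}_{\Phi}(a,b,c)\in\mathbf{C}[[a,b,c]]$, and there is no notion of letting $a\to\infty$ or of a ``periodic function'' of a formal variable. Consequently, solving your linear system leaves the three-variable power series undetermined, and the normalization $\lim_{c\to 0}S^{+}_{\Phi}(a,b,c)=\tfrac12 B_{\Phi}(a,b)^2$ only fixes a two-variable slice, which together with integer-shift relations cannot reconstruct the full series. (A secondary, lesser issue: the regularization statement you invoke is proved in the paper only for one-variable integrals on $\Cal M_4$, so the boundary bookkeeping on the two-dimensional cell with its diagonal stratum would itself need to be developed; you flag this but do not resolve it.)

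For contrast, the paper avoids recursions entirely: it evaluates one auxiliary triple $\Phi$-integral over $0<x_1<x_2<x_3<1$ on $\Cal M_{0,6}$ in two ways. Integrating the middle variable first gives $\frac{\Gamma_{\Phi}(c+1)^2}{\Gamma_{\Phi}(2c+2)}S^{+}_{\Phi}(a+1,b+1,c+1)$; projecting to $x_2$ instead, the $\iota$-invariant part of $\bold R^2q_*T(a,b,c)$ is shown to be of rank one and, after twisting by $x_2^{-a-c}(1-x_2)^{-b-c}$, to have trivial connection, so by the descent proposition it is a pullback and its fiber at $\overline{01}$ is identified with $\bold B_{\Phi}(a,c)\otimes\bold B_{\Phi}(a+c,b)$; the remaining outer integral is again a $\Phi$-Beta function. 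Equating the two evaluations gives the closed form outright, with every ingredient already expressed through $\Gamma_{\Phi}$ by the one-cocycle relation, so no asymptotic or rigidity input is ever needed. If you want to salvage an Aomoto-style argument, you would have to supply a substitute for the classical periodicity-plus-asymptotics step that works coefficientwise in $\mathbf{C}[[a,b,c]]$ for an arbitrary associator; as written, that is the missing idea.
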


We define $\Cal A^{\Phi}_6$ module 
$$
T(a,b,c)=
x_1^ax_3^a
(1-x_1)^b
(1-x_3)^b
(x_2-x_1)^c
(x_3-x_2)^c
\bold Q[[a,b,c]]
$$
Let $\iota:\Cal M_6 \to \Cal M_6$ be an involution defined by
$(x_1,x_3)\mapsto (x_3,x_1)$.
Then we have an equivariant action $T(a,b,c)\simeq \iota_*T(a,b,c)$.
The real valued section at $p^{(0)}=(x^{(0)}_1,x^{(0)}_2,x^{(0)}_3)$ 
$(0< x_1^{(0)}<x_2^{(0)}<x_3^{(0)}<1)$ goes to the section
with real value in
\begin{align*}
\iota^*T(a,b,c)_{p^{(0)}}=&T(a,b,c)_{\iota(p^{(0)})}
\\
=&
(x_3^{(0)})^a(x_1^{(0)})^a
(1-x_3^{(0)})^b
(1-x_1^{(0)})^b
(x_2^{(0)}-x_3^{(0)})^c
(x_1^{(0)}-x_2^{(0)})^c\bold Q[[a,b,c]]
\end{align*}
We consider the pairing 
$$
H_3^{\Phi}(\Cal M_6,T(a,b,c))\otimes H^3_{\Phi}(\Cal M_6,T(-a,-b,-c))\to
\bold C[[a,b,c]].
$$

\subsubsection{The first integral formula}
Let $\Cal M_6\to \Cal M_5$ be the map defined by 
$(x_1,x_2,x_3)\mapsto (x_1,x_3)$.
\begin{align*}
&
\int^{\Phi}_{0<x_1<x_3<1}\bigg[\int^{\Phi}_{x_1<x_2<x_3}
x_1^ax_3^a
(1-x_1)^b
(1-x_3)^b
\\
&
(x_2-x_1)^c
(x_3-x_2)^c(x_3-x_1)
dx_2
\bigg]dx_1dx_3
\\
=&
\int^{\Phi}_{0<x_1<x_3<1}\bigg[
x_1^ax_3^a(1-x_1)^b
(1-x_3)^b(x_3-x_1)
\\
&
\int^{\Phi}_{x_1<x_2<x_3}
(x_3-x_2)^c
(x_2-x_1)^c
dx_2
\bigg]dx_1dx_3
\\
=&
\frac{\Gamma_{\Phi}(c+1)^2}{\Gamma_{\Phi}(2c+2)}
\int^{\Phi}_{0<x_1<x_3<1}\bigg[
x_1^ax_3^a(1-x_1)^b
(1-x_3)^b
(x_3-x_1)^{2c+2}
\bigg]dx_1dx_3
\\
=&
\frac{\Gamma_{\Phi}(c+1)^2}
{\Gamma_{\Phi}(2c+2)}S_{\Phi}(a+1,b+1,c+1)
\end{align*}
\subsubsection{The second integral formula}
We need the following formla
Let $q:\Cal M_6\to \Cal M_4$ be the map defined by
$(x_1,x_2,x_3)\mapsto x_2$.
We define $\Cal D(a,b,c)$ 
by the fixed part  
of $\bold R^2q_*(T(a,b,c))^{\iota}$. 
\begin{proposition}[Determinant formula]
\begin{enumerate}
\item
$\Cal D(a,b,c)_{dR}$ is a torsion free sheaf of rank one 
over $\bold Q[[a,b,c]]$
generated by
$$
x_1^ax_3^a
(1-x_1)^b
(1-x_3)^b
(x_2-x_1)^c
(x_3-x_2)^c(x_3-x_1)
dx_1dx_3.
$$
\item
We have an isomorphism
of $\Cal A_{4}^{\Phi}$-modules
$$
\phi:\Cal D(a,b,c)\simeq \bold B_{\Phi}(a,c)\otimes 
\bold B_{\Phi}(a+c,b)
\otimes_{\bold Q[[a,b,c]]} 
\bigg(t^{a+c}(1-t)^{b+c}\bold Q[[a,b,c]]\bigg)
$$
\end{enumerate}
\end{proposition}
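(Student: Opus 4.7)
My plan is to treat Parts (1) and (2) in sequence, using the rank-one output of (1) as the key input that makes (2) essentially a calculation of invariants.

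For Part (1), the $\iota$-invariance of
$$
\omega = x_1^a x_3^a (1-x_1)^b (1-x_3)^b (x_2-x_1)^c (x_3-x_2)^c (x_3-x_1)\, dx_1 \wedge dx_3
$$
is immediate: the swap $x_1 \leftrightarrow x_3$ negates both $(x_3-x_1)$ and $dx_1 \wedge dx_3$, so their product is invariant, while $x_1^a x_3^a (1-x_1)^b (1-x_3)^b$ and $(x_2-x_1)^c (x_3-x_2)^c$ are symmetric under the usual branch conventions of the local system. For rank one and torsion-freeness, I would invoke the comparison-to-de-Rham-complexes proposition of Section 4.4.1: $\bold R^2 q_* T(a,b,c)_{dR}$ is computed by $H^2$ of the relative de Rham complex $\Omega^\bullet_{6/4}$ twisted by the connection from $T(a,b,c)$. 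An Euler-characteristic count on the two-dimensional fiber $F_t = \{(x_1,x_3): x_1, x_3 \notin \{0,1,t\},\ x_1 \neq x_3\}$, together with generic vanishing of $\bold R^i q_*$ for $i < 2$, pins down the total rank of $\bold R^2 q_*$. Diagonalizing the action of $\iota$ via an explicit basis of twisted one-cells then isolates the $+1$-eigenspace; verifying it is one-dimensional and that $[\omega]$ is a nonzero element completes the part, with torsion-freeness over $\bold Q[[a,b,c]]$ following from flatness of the relative de Rham complex.

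For Part (2), I would exploit the rank-one structure from Part (1): a rank-one $\Cal A_4^\Phi[[a,b,c]]$-module object in $\Cal C$ is determined up to a scalar by its local monodromy exponents at $x_2 = 0$ and $x_2 = 1$ together with one matching Betti-de Rham comparison value. The substitution $x_1 = x_2 u$, $x_3 = 1 - (1-x_2)v$ with $u,v \in (0,1)$ transforms $\omega$ into
$$
x_2^{a+c+1}(1-x_2)^{b+c+1} \cdot u^a(1-u)^c v^b(1-v)^c \cdot (1-x_2 u)^b (1-(1-x_2)v)^a \cdot (1 - x_2 u - (1-x_2)v)\, du \wedge dv,
$$
and the cross factors $(1-x_2 u)^b$, $(1-(1-x_2)v)^a$, and the final linear factor are regular at $x_2 = 0, 1$, so the monodromy exponents are correctly $a+c$ at $0$ and $b+c$ at $1$, matching the $\Cal A_4^\Phi$-structure of $t^{a+c}(1-t)^{b+c}\bold Q[[a,b,c]]$. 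The Beta scalings are then extracted by the Hochschild-Serre-Leray spectral sequence of Section 4.2 applied to the factorization $\Cal M_6 \to \Cal M_5 \to \Cal M_4$ forgetting $x_3$ then $x_1$: each $\bold R^1$ in the iteration contributes one of the Beta module factors $\bold B_\Phi(a,c)$ and $\bold B_\Phi(a+c, b)$ through the $\Phi$-hypergeometric constructions of Section 5, with the constants coming out right because the $u$- and $v$-integrals above are exactly in Beta form after the $\iota$-symmetrization.

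The principal obstacle is the rank-one count and $\iota$-eigenspace decomposition in Part (1): an honest diagonalization of $\iota$ on relative twisted $H^2$ of a configuration space requires writing down a spanning set of twisted forms and carefully tracking how the local system branch choices interact with the swap, and the conventions must be set so that the $\iota$-structure on $T(a,b,c)$ is the one compatible with the object-in-$\Cal C$ structure used to define $\Cal D(a,b,c)$. Once rank one is established, Part (2) is largely formal: the exponent match is the above substitution, and the Beta scaling is a standard iterated-Leray computation using the tools already developed in the paper.
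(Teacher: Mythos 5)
Your strategy for part (2) --- classify rank-one $\Cal A_4^{\Phi}$-modules by the two local exponents together with a single Betti--de Rham comparison scalar --- is, in different words, what the paper actually does: it twists $\Cal D(a,b,c)$ by $t^{-a-c}(1-t)^{-b-c}$ so that $e_0,e_1$ act by zero, invokes the descent proposition to conclude the twisted module is pulled back from an object of $\Cal C$, and then evaluates that constant object as the fiber at $\overline{01}$. Two caveats before the main point: part (1), which supplies the rank-one input, is only sketched by you, but the paper gives no proof of it at all, so that is not where the comparison turns; and your own displayed substitution produces $x_2^{a+c+1}(1-x_2)^{b+c+1}$, which you then silently declare to be exponents $a+c$ and $b+c$. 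The paper's subsequent application of the proposition integrates against $x_2^{a+c+1}(1-x_2)^{b+c+1}$, so this shift is real and has to be tracked rather than asserted away.

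The genuine gap is in how you extract the comparison scalar, i.e.\ the Beta factors. Your symmetric substitution $x_1=x_2u$, $x_3=1-(1-x_2)v$ leaves the integrals of $u^a(1-u)^c$ and $v^b(1-v)^c$, so the mechanism you describe would yield $\bold B_{\Phi}(a,c)\otimes\bold B_{\Phi}(b,c)$ rather than $\bold B_{\Phi}(a,c)\otimes\bold B_{\Phi}(a+c,b)$; these differ by $\Gamma_{\Phi}(a+c)\Gamma_{\Phi}(b+c)/\Gamma_{\Phi}(c)\Gamma_{\Phi}(a+b+c)$ up to rational factors, whose logarithm begins with $-2ab\,\zeta_{\Phi}(2)$, so the discrepancy is not absorbed by units of $\bold Q[[a,b,c]]$. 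The scalar must be computed at a single tangential base point: the paper sets $x_1=x_2\xi$ and lets $x_2\to\overline{01}$, the $\xi$-integral gives the factor $\bold B_{\Phi}(a,c)$, and, crucially, $(x_3-x_2)^c(x_3-x_1)$ degenerates to $x_3^{c+1}$, shifting the outer integral to $\int x_3^{a+c+1}(1-x_3)^b dx_3$, which is exactly where $\bold B_{\Phi}(a+c,b)$ comes from. Your substitution, adapted simultaneously to both ends, destroys this nesting and exponent shift, which is why it returns the wrong second factor. Nor does the iterated Leray sequence for $\Cal M_6\to\Cal M_5\to\Cal M_4$ hand you Beta modules stage by stage: the intermediate $\bold R^1$ along one fiber variable is a hypergeometric-type module of rank at least two (the fiber is $\bold P^1$ minus four or more points), so isolating the relevant rank-one piece requires precisely the one-sided degeneration analysis above; as written, your sentence that the constants come out right restates the target rather than proving it.
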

\begin{proof}
(2) We consider the $\Cal A^{\Phi}_4$-module $M$ defined by
$$
M=\Cal D(a,b,c)
\otimes_{\bold Q[[a,b,c]]} 
\bigg(t^{-a-c}(1-t)^{-b-c}\bold Q[[a,b,c]]\bigg)
$$
We compute the action of $A_{dR}^4$ on $M_{dR}$.
Then the map
$$
M_{dR}\xrightarrow{e_0,e_1} M_{dR}\oplus M_{dR}
$$
is the zero map. Therefore $M$ is the pull back of
an object in $\Cal C$. 
To consider the fiber at $\overline{01}$,
we consider the integral
\begin{align*}
&
x_2^{-a-c-1}\int^{\Phi}_{0\leq x_1\leq t,t\leq x_3 \leq 1}
x_1^{a}x_3^{a}
(1-x_1)^b
(1-x_3)^b
(x_2-x_1)^c
(x_3-x_2)^c(x_3-x_1)
dx_1dx_3
\\
=&
\int^{\Phi}_{0\leq \xi\leq 1,x_2\leq x_3 \leq 1}
\xi^{a}x_3^{a}
(1-x_2\xi)^b
(1-x_3)^b
(1-\xi)^c
(x_3-x_2)^c(x_3-x_2\xi)
d\xi dx_3
\\
\underset{x_2 \to 0}\to&
\int^{\Phi}_{0\leq \xi\leq 1,0\leq x_3 \leq 1}
\xi^{a}x_3^{a+c+1}
(1-x_3)^b
(1-\xi)^c
d\xi dx_3
\\
\end{align*}
Therefore the fiber 
$M_{\overline{01}}\in \Cal C$ is isomprhic to 
$\bold B_{\Phi}(a,c)\otimes \bold B_{\Phi}(a+c,b)$.
Thus we have the proposition.
\end{proof}
Using the above proposition, we have
\begin{align*}
&
\int^{\Phi}_{0<x_2<1}
\bigg[
\int^{\Phi}_{0<x_1<x_2,x_2<x_3<1}
x_1^{a}x_3^{a}
(1-x_1)^b
(1-x_3)^b
\\
&
(x_2-x_1)^c
(x_3-x_2)^c
(x_3-x_1)dx_1dx_3
\bigg]dx_2
\\
=&
\frac{\Gamma_{\Phi}(a+1,c+1,a+c+2,b+1)}
{\Gamma_{\Phi}(a+c+2,a+b+c+3)}
\int^{\Phi}_{0<x_2<1}
x_2^{a+c+1}(1-x_2)^{b+c+1}
dx_2
\\
=&
\frac{\Gamma_{\Phi}(a+1,c+1,b+1,a+c+2,b+c+2)}
{\Gamma_{\Phi}(a+b+c+3,a+b+2c+4)}
\end{align*}

\subsubsection{Proof of $\Phi$-Selberg integral formual}
By computing the integral
\begin{align*}
&\int^{\Phi}_{0<x_1<x_2<x_3<1}x_1^ax_3^a
(1-x_1)^b
(1-x_3)^b
\\
&
(x_2-x_1)^c
(x_3-x_2)^c
(x_3-x_1)
dx_1dx_2dx_3
\end{align*}
in two ways, we have
\begin{align*}
&\frac{\Gamma(a+1)\Gamma(c+1)\Gamma(b+1)\Gamma(a+c+2)\Gamma(b+c+2)}
{\Gamma(a+b+c+3)\Gamma(a+b+2c+4)} \\
=&\frac{\Gamma_{\Phi}(c+1)^2}
{\Gamma_{\Phi}(2c+2)}S^+_{\Phi}(a+1,b+1,c+1)
\end{align*}
and
\begin{align*}
\frac{\Gamma_{\Phi}(2c+2,a+1,b+1,a+c+2,b+c+2)}
{\Gamma_{\Phi}(c+1,a+b+c+3,a+b+2c+4)}
=
S^+_{\Phi}(a+1,b+1,c+1)
\end{align*}

\subsection{Odd $\Phi$-Selberg integral}
We compute the following odd $\Phi$-Selberg integral $S^-_{\Phi}(a,b,c)$

First we consider the differential equations satisfied by
$\ _3F_2$.
For a cycle $\gamma$, we consider the integral
$$
f(\gamma)=\int^{\Phi}_{\gamma}t_1^{a_1}(1-t_1)^{c_1-a_1}t_2^{a_2}(1-t_2)^{c_2-a_2}
(1-xt_1t_2)^{-a_3}\frac{dt_1}{t_1}\frac{dt_2}{t_2}
$$
We change coordinates to $(t_2,t_3)$ with the relation
$xt_1t_2t_3=1$. It is also expressed by similar integral expression.
Using this expression,
\begin{align*}
f_{11}&=r_1F_{\Phi}(a_1,a_2,a_3;c_1+1,c_2+1;x) \\
f_{12}&=r_2x^{-c_1}F_{\Phi}(a_2-c_1,a_3-c_1,a_1-c_1;c_2-c_1+1,-c_1+1;x) \\
f_{13}&=r_3x^{-c_2}F_{\Phi}(a_1-c_2,a_3-c_2,a_2-c_2;c_1-c_2+1,-c_2+1;x)
\end{align*}
with
\begin{align*}
&r_1=B_{\Phi}(a_1,c_1-a_1+1)B_{\Phi}(a_2,c_2-a_2+1) \\
&r_2=B_{\Phi}(a_2-c_1,c_2-a_2+1)B_{\Phi}(a_3-c_1,-a_3+1)\\
&r_3=B_{\Phi}(a_1-c_2,c_1-a_1+1)B_{\Phi}(a_3-c_2,-a_3+1)\\
\end{align*}
satisfies the same rational differential equation of $t$.
Thus we have
\begin{align*}
r_1r_2r_3=
&\frac{\Gamma_{\Phi}(a_1,c_1-a_1+1,a_2,c_2-a_2+1)}
{\Gamma_{\Phi}(c_1+1,c_2+1)}
\\
&\frac{\Gamma_{\Phi}(a_2-c_1,c_2-a_2+1,a_3-c_1,-a_3+1)}
{\Gamma_{\Phi}(-c_1+c_2+1,-c_1+1)}
\\
&\frac{\Gamma_{\Phi}(a_1-c_2,c_1-a_1+1,a_3-c_2,-a_3+1)}
{\Gamma_{\Phi}(-c_2+c_1+1,-c_2+1)}
\\
\end{align*}
They are integral of the following chains up to constant:
$$
\gamma_1=\{(t_1,t_2)\in [0,1]^2\},\quad
\gamma_2=\{(t_2,t_3)\in [0,1]^2\},\quad
\gamma_3=\{(t_1,t_3)\in [0,1]^2\}.
$$
We define a matrix $F=(\bold f_1,\bold f_2,\bold f_3)=(f_{ij})$ by
and
$$
f_{2i}=x\frac{df_{1i}}{dx},\quad
f_{3i}=x\frac{df_{2i}}{dx}.
$$
In general, we set
$$
\bold f(\gamma)=
\left(
\begin{matrix}
f(\gamma) \\
x\frac{d}{dx}f(\gamma) \\
x\frac{d}{dx}x\frac{d}{dx}f(\gamma)
\end{matrix}
\right),\quad
\bold f'(\gamma)=
\lim_{t\to 1}\left(
\begin{matrix}
f(\gamma) \\
x\frac{d}{dx}f(\gamma) 
\end{matrix}
\right)
$$

By setting
\begin{align*}
P=&
\frac{dx}{x}
\left(\begin{matrix}0 & 1 & 0\\ 0 & 0 & 1\\ 0 & -c_2c_1 & -c_2-c_1
\end{matrix}\right) \\
&+\frac{dx}{x-1}
\left(\begin{matrix}
0 & 0 & 0 \\ 0 & 0 & 0 \\
-a_1a_2a_3 & -a_2a_1+c_2c_1-a_3a_1-a_3a_2 & -a_1-a_2-a_3+c_1+c_2
\end{matrix}\right),
\end{align*}
we have
$$
dF=PF.
$$
Therefore
$$
\det(F)=cx^{-c_1-c_2}(1-x)^{-a_1-a_2-a_3+c_1+c_2}
$$
with some constant $c$. By considering the limit for $x=0$, we have
$$c=r_1r_2r_3c_1c_2(c_1-c_2).$$
Now we consider the limit for $x=1$.
Changing coordinate 
$\xi_1=\frac{1}{t_1}$,
$\xi_2=t_2$, we have
\begin{align*}
&\int^{\Phi}_{\gamma}t_1^{a_1}(1-t_1)^{c_1-a_1}t_2^{a_2}(1-t_2)^{c_2-a_2}
(1-xt_1t_2)^{-a_3}\frac{dt_1}{t_1}\frac{dt_2}{t_2} \\
=&\int^{\Phi}_{\gamma}\xi_1^{a_3-c_1}(\xi_1-1)^{c_1-a_1}\xi_2^{a_2}(1-\xi_2)^{c_2-a_2}
(\xi_1-x\xi_2)^{-a_3}\frac{d\xi_1}{\xi_1}\frac{d\xi_2}{\xi_2}
\end{align*}
Under this coordinate, $\gamma_2$ is equal to
$$
\gamma_2=\{0\leq \xi_1\leq x\xi_2, 0\leq \xi_2 \leq 1\}.
$$
We set
\begin{align*}
&\gamma_5=\{x\xi_2\leq \xi_1 \leq 1, 0\leq \xi_2\leq 1\} \\
&\tau=\{x\xi_2\leq \xi_1 \leq 1, 1\leq \xi_2\leq \frac{1}{x}\} 
\end{align*}
Using the equality
\begin{align*}
&
\sin(\pi\alpha)\int_0^u
x^{\alpha}
(u-x)^{\beta}
(1-x)^{\gamma}
\frac{dx}{x} 
+\sin(\pi(\alpha+\beta))\int_u^1
x^{\alpha}
(x-u)^{\beta}
(1-x)^{\gamma}
\frac{dx}{x} \\
+&\sin(\pi(\alpha+\beta+\gamma))\int_1^{\infty}
x^{\alpha}
(x-u)^{\beta}
(x-1)^{\gamma}
\frac{dx}{x}=0
\end{align*}
we have the following relations for topological cycles.
\begin{lemma}
We have
\begin{align*}
&
\gamma_5=
-\frac{\sin(\pi a_1)}{\sin(\pi c_1)}\gamma_1
+\frac{\sin(\pi (a_3-c_1))}{\sin(\pi c_1)}\gamma_2 \\
&
\tau=k_1\gamma_1+k_2\gamma_2+k_3\gamma_3
\end{align*}
with
$$
k_3=
\frac{\sin(\pi(c_2-a_1))\sin(\pi(c_2-a_3))}
{\sin(\pi c_2)\sin(\pi(c_1-c_2))}.
$$
As a consequence, we have
$$
\det(\bold f(\gamma_2),\bold f(\gamma_5),\bold f(\tau))=
Cx^{-c_1-c_2}(1-x)^{c_1+c_2-a_1-a_2-a_3}
$$
with
\begin{align*}
C=
&
\Gamma_{\Phi}(a_2,c_1-a_1+1,c_2-a_2+1,c_2-a_2+1,-a_3+1)
 \\
&
\frac{\Gamma_{\Phi}(a_2-c_1,c_1-a_1+1,a_3-c_1,-a_3+1)}
{\Gamma_{\Phi}(-a_1+1,c_2-a_1+1,c_2-a_3+1)} 
\end{align*}
\end{lemma}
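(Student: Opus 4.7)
The plan is to derive the two claimed linear relations by applying the one-variable three-term sine identity (displayed just before the lemma) in the appropriate fiber direction, and then to evaluate the determinant by reducing to the already computed Wronskian $\det(\bold f(\gamma_1),\bold f(\gamma_2),\bold f(\gamma_3))$ via multilinearity.

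First I treat $\gamma_5$. In the $(\xi_1,\xi_2)$-coordinates the integrand reads
$\xi_1^{a_3-c_1}(\xi_1-1)^{c_1-a_1}\xi_2^{a_2}(1-\xi_2)^{c_2-a_2}(\xi_1-x\xi_2)^{-a_3}\,d\xi_1 d\xi_2/(\xi_1\xi_2)$.
Fixing $\xi_2\in(0,1)$ and applying the sine identity in $\xi_1$ with $u=x\xi_2$ and $(\alpha,\beta,\gamma)=(a_3-c_1,-a_3,c_1-a_1)$, one gets $\alpha+\beta=-c_1$ and $\alpha+\beta+\gamma=-a_1$; the three intervals $[0,x\xi_2]$, $[x\xi_2,1]$, $[1,\infty]$ are precisely the $\xi_1$-slices of $\gamma_2$, $\gamma_5$ and $\gamma_1$ respectively (recall that $\xi_1=1/t_1$ sends $t_1\in[0,1]$ to $\xi_1\in[1,\infty]$). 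Reading off the sine weights yields the stated identity for $\gamma_5$.

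For $\tau=\{x\xi_2\leq\xi_1\leq 1,\ 1\leq\xi_2\leq 1/x\}$, I would apply the sine identity in the $\xi_2$-direction with breakpoint $\xi_1/x$ (which for $\xi_1\in(x,1)$ lies in $(1,1/x)$), splitting $\tau$ into pieces supported on the three standard $\xi_2$-ranges $[0,1]$, $[1,\xi_1/x]$, $[\xi_1/x,\infty]$. A further application of the $\xi_1$-sine identity then expresses each of these pieces as a combination of $\gamma_1,\gamma_2,\gamma_3$. For the determinant only the coefficient $k_3$ of $\gamma_3$ matters; it is the product of the two sine weights picked up in this two-step reduction, giving $k_3=\sin(\pi(c_2-a_1))\sin(\pi(c_2-a_3))/[\sin(\pi c_2)\sin(\pi(c_1-c_2))]$.

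Finally, by multilinearity every term in $\det(\bold f(\gamma_2),\bold f(\gamma_5),\bold f(\tau))$ either has a repeated $\gamma_2$ column (killing the $\beta\gamma_2$ part of $\gamma_5$ and the $k_2\gamma_2$ part of $\tau$) or a repeated $\gamma_1$ column (killing the $k_1\gamma_1$ part of $\tau$ against the $\alpha\gamma_1$ part of $\gamma_5$). The sole surviving contribution is
$$\det(\bold f(\gamma_2),\bold f(\gamma_5),\bold f(\tau))=-\frac{\sin(\pi a_1)}{\sin(\pi c_1)}\,k_3\,\det(\bold f(\gamma_2),\bold f(\gamma_1),\bold f(\gamma_3))=\frac{\sin(\pi a_1)\,k_3}{\sin(\pi c_1)}\det(\bold f(\gamma_1),\bold f(\gamma_2),\bold f(\gamma_3)).$$
Substituting the known value $\det(\bold f(\gamma_1),\bold f(\gamma_2),\bold f(\gamma_3))=r_1r_2r_3\,c_1c_2(c_1-c_2)\,x^{-c_1-c_2}(1-x)^{c_1+c_2-a_1-a_2-a_3}$ and converting sines to $\Gamma_\Phi$-ratios via $\bold s(z)=1/[\Gamma_\Phi(z)\Gamma_\Phi(1-z)]$ produces the claimed formula for $C$. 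The main obstacle is the reduction step for $\tau$: the interval $[1,1/x]$ in $\xi_2$ interacts non-trivially with the singular divisor $\xi_1=x\xi_2$, so the iterated sine relation must be applied with careful tracking of intermediate breakpoints, and in the $\Phi$-setting it must be realized through the fiber cycle identities in $\Cal A_4^\Phi$ analogous to (\ref{cycles on M_4}) rather than via analytic branch manipulation.
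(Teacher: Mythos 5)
Your handling of $\gamma_5$ and of the determinant is sound. Slicing in the $\xi_1$-direction with breakpoint $u=x\xi_2$ and exponents $(\alpha,\beta,\gamma)=(a_3-c_1,-a_3,c_1-a_1)$ does give $\sin(\pi(a_3-c_1))\gamma_2-\sin(\pi c_1)\gamma_5-\sin(\pi a_1)\gamma_1=0$, i.e.\ the stated relation; and once both cycle relations are granted, multilinearity leaves only the $\gamma_1$--$\gamma_3$ cross term, so the determinant equals $\frac{\sin(\pi a_1)}{\sin(\pi c_1)}k_3\det F$ with $\det F=r_1r_2r_3c_1c_2(c_1-c_2)x^{-c_1-c_2}(1-x)^{c_1+c_2-a_1-a_2-a_3}$; converting the sine ratios by $\bold s(z)=1/(\Gamma_{\Phi}(z)\Gamma_{\Phi}(1-z))$ and absorbing $c_1,c_2,c_1-c_2$ into $\Gamma_{\Phi}(c_1+1)$, etc., this does reproduce the stated $C$ (the leftover factors $\Gamma_{\Phi}(a_1-c_2)\Gamma_{\Phi}(c_2-a_1+1)/(\Gamma_{\Phi}(c_2-a_1)\Gamma_{\Phi}(a_1-c_2+1))=-1$ and its $a_3$-analogue cancel in pairs). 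Since the paper itself only asserts the cycle relations "using the equality", filling in these details is exactly what a proof must do, and for $\gamma_5$ and the determinant you have done it.

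The genuine gap is the $\tau$-relation, i.e.\ the coefficient $k_3$, which is the only input that actually enters $C$. Two problems. First, after your $\xi_2$-move at breakpoint $\xi_1/x$ the companion pieces are $\{x\le\xi_1\le 1,\ 0\le\xi_2\le 1\}$ and $\{x\le\xi_1\le 1,\ \xi_2\ge\xi_1/x\}$; their faces along $\xi_1=x$ do not lie on the singular divisor, so they are not chamber (twisted relative) cycles and "a further application of the $\xi_1$-sine identity" is not defined for them without an additional, nontrivial decomposition — this is exactly the obstacle you flag but never resolve. Second, and decisively, the answer cannot be "the product of the two sine weights" of such a two-step reduction: any three-term identity along a $\xi_1$-line involves only sines of partial sums of the exponents $a_3-c_1,\ c_1-a_1,\ -a_3,\ a_1$ (angles built from $a_1,a_3,c_1$ alone), and along a $\xi_2$-line only angles built from $a_2,a_3,c_2$; but $k_3$ contains $\sin(\pi(c_2-a_1))$ and $\sin(\pi(c_1-c_2))$, which mix the two groups. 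These angles are exponents along divisors at infinity, respectively exponent differences visible only in the other charts $(t_2,t_3)$ and $(t_1,t_3)$ that define $f_{12},f_{13}$, so a correct derivation of $k_3$ must use moves in those charts or a genuinely two-dimensional argument (for instance, noting $\tau=\{t_1,t_2,t_3\ge 1\}$ and matching the $x^{-c_2}$-asymptotics of $f(\tau)$ against $f(\gamma_3)$ as $x\to 0$, or computing twisted intersection numbers). As written, the key coefficient $k_3$ is asserted rather than derived, so the heart of the lemma remains unproved in your proposal.
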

By taking a limit $t \to 1$, since $\tau$ is a vanishing cycle,
we have the following theorem.
\begin{proposition}
\label{theorem minus part of Selberg}
\begin{enumerate}
\item
\begin{align*}
&\lim_{t\to 1}(1-t)^{-c_1-c_2+a_1+a_2+a_3}\bold f(\tau)
=\left(
\begin{matrix}
0 \\ 0 \\
J
\end{matrix}
\right),
\\
&J=\frac
{\Gamma_{\Phi}(-a_3+1,c_1-a_1+1,c_2-a_2+1)}
{\Gamma_{\Phi}(c_1+c_2-a_1-a_2-a_3+1)}
\end{align*}
\item
\begin{align*}
&\det (\bold f'(\gamma_2),\bold f'(\gamma_5)) \\
=&
\Gamma_{\Phi}(a_2,c_1+c_2-a_1-a_2-a_3+1,a_2-c_1,c_1-a_1+1,a_3-c_1)
\\
&
\frac{\Gamma_{\Phi}(c_2-a_2+1,-a_3+1)
}{\Gamma_{\Phi}(-a_1+1,c_2-a_1+1,c_2-a_3+1)} \\
\end{align*}
\end{enumerate}
\end{proposition}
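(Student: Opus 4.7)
The plan is to combine the Wronskian-type identity of the preceding lemma with an asymptotic analysis of $\bold f(\tau)$ near $x=1$, and then extract (2) from (1) by a cofactor argument. First I would read off the indicial structure at $x=1$: since $P_1$ has rank one, its non-zero eigenvalue is the trace $\lambda := c_1+c_2-a_1-a_2-a_3$ with eigenvector $(0,0,1)^T$, and the two remaining exponents vanish. Because $\tau$ is the vanishing cycle attached to the non-trivial exponent, the horizontal lift $\bold f(\tau)$ lies, to leading order in $(1-x)^{\lambda}$, in the direction $(0,0,1)^T$. This immediately forces the first two components of $(1-x)^{-\lambda}\bold f(\tau)$ to tend to $0$ as $x \to 1$, which is the vanishing part of Part (1).

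For the constant $J$ I would compute the leading coefficient of $\bold f(\tau)$ explicitly by rescaling the vanishing cycle. The substitution $\xi_2 = 1+(1-x)v/x$ and $\xi_1 = x\xi_2 + (1-x\xi_2)u$ sends $\tau$ diffeomorphically onto $[0,1]^2$ in $(u,v)$, with Jacobian of order $(1-x)^2$. The integrand then collapses at leading order to $(1-x)^{\lambda+2}$ times $u^{-a_3}(1-u)^{c_1-a_1}v^{c_2-a_2}(1-v)^{c_1-a_1-a_3+1}$, so the rescaled $\Phi$-integral is a product of two $\Phi$-Beta integrals. Applying the operators $x\partial_x$ twice to $f(\tau) \sim K_0(1-x)^{\lambda+2}$ amplifies it to $K_0(\lambda+2)(\lambda+1)(1-x)^\lambda$ in the third component while leaving the first two of strictly higher order in $(1-x)$. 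Using $B_\Phi(\alpha,\beta) = \Gamma_\Phi(\alpha)\Gamma_\Phi(\beta)/\Gamma_\Phi(\alpha+\beta)$ together with the arithmetic of $\Gamma_\Phi$ on integer shifts, the resulting constant collapses to the stated $J$.

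For Part (2) I would take the identity
\[
\det(\bold f(\gamma_2),\bold f(\gamma_5),\bold f(\tau)) = Cx^{-c_1-c_2}(1-x)^\lambda
\]
of the preceding lemma, multiply by $(1-x)^{-\lambda}$, and pass to the limit $x \to 1$. Cofactor-expanding along the $\bold f(\tau)$-column and using Part (1), only the $2\times 2$ minor paired with the third component of $(1-x)^{-\lambda}\bold f(\tau)$ survives in the limit; the left-hand side therefore tends to $J\cdot\det(\bold f'(\gamma_2),\bold f'(\gamma_5))$, while the right-hand side tends to $C$. Solving for the $2\times 2$ determinant and simplifying with the explicit expressions for $C$ and $J$ yields the formula in (2).

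The main obstacle will be to justify the rescaling step in the $\Phi$-integral framework, where no dominated convergence theorem is available. The cleanest route is to realise the rescaling as an infinitesimal inclusion $\Cal M_4\times\Cal M_4 \hookrightarrow \Cal M_6$ corresponding to the degeneration $x\to 1$; under this inclusion the $\Cal A_6^\Phi$-module underlying the $\tau$-integral factors as an external tensor product of two $\Phi$-Beta modules, and the descent formalism of Section 5 identifies the leading coefficient of $\bold f(\tau)$ with the stated product of $\Phi$-Gamma values on the nose. Once this algebraic identification is in hand, the computations sketched above go through unchanged and both parts follow.
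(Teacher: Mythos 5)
Your proposal is correct and takes essentially the same route the paper intends: the paper deduces the proposition in one line from the preceding lemma (``taking the limit $t\to 1$, since $\tau$ is a vanishing cycle''), i.e.\ the vanishing-cycle asymptotics $f(\tau)\sim K_0(1-x)^{\lambda+2}$ with $\lambda=c_1+c_2-a_1-a_2-a_3$ and $K_0$ a product of two $\Phi$-Beta integrals, followed by the limit of the determinant identity and a cofactor expansion. Your explicit rescaling computation of $J$ and the resulting relation $C=J\cdot\det(\bold f'(\gamma_2),\bold f'(\gamma_5))$ are exactly the details the paper leaves implicit, and they are consistent with the stated formulas for $C$ and $J$.
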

\begin{corollary}
\end{corollary}
\begin{proof}
We use Proposition \ref{theorem minus part of Selberg}, 
by settnig
$-a_3-1=2c,c_1-a_1=c_2-a_2=b-1,a_2=a_3-c_1=a$
hand have
\begin{align*}
&(2c+1)S_{\Phi}^-(a,b,c)Sel_{\Phi}(x+y)_{a,b,c} 
\\
=&\det\left(
\begin{matrix}
S_{\Phi}^-(a,b,c) & -S_{\Phi}^-(a,b,c) \\
(2c+1)Sel_{\Phi}(x)_{a,b,c} & -(2c+1)Sel_{\Phi}(y)_{a,b,c}
\end{matrix}
\right) \\
=&
\frac{\Gamma_{\Phi}(a,a,b,b,2c+2b,2a+2c+1,2c+2)}
{\Gamma_{\Phi}(2c+a+b+1,2c+a+b+1,2a+2b+2c)}
\end{align*}
Since
\begin{align*}
Sel_{\Phi}(x+y)_{a,b,c}=&
Sel_{\Phi}(1)_{a+1,b,c}-
Sel_{\Phi}(1)_{a,b+1,c}+
Sel_{\Phi}(1)_{a,b,c} \\
=&
\frac{2\Gamma_{\Phi}(a+1+c,2c,b+c,b,a)}
{\Gamma_{\Phi}(c,a+1+b+2c,a+b+c)}
\end{align*}
We have
\begin{align*}
&S_{\Phi}^-(a,b,c) \\
=&
\frac
{\Gamma_{\Phi}(a,b,c+1,a+b+c,2c+2b,2a+2c+1)}
{\Gamma_{\Phi}(2a+2b+2c,b+c,a+c+1,2c+a+b+1)
} \\
\end{align*}
\end{proof}
\subsection{$\Phi$-Dixon's theorem and its variants}
We prove Dixon's theorem and its generalization using
Selberg integral formula.
\begin{proposition}[Dixson's theorem and its variant]
\begin{enumerate}
\item
\begin{align*}
&F_{\Phi}(2c,b,a;2c-a+1,2c-b+1;1) 
\\=&
\frac{\Gamma_{\Phi}(1+c,1+2c-a,1+2c-b,1+c-a-b)}
{\Gamma_{\Phi}(1+2c,1+c-a,1+c-b,1+2c-a-b)}
\end{align*}
\item
\begin{align*}
&F_{\Phi}(2c+1,b,a;2c-a+3,2c-b+3;1) \\
=&\bigg(
\Gamma_{\Phi}(2c,2c+4-2a-2b,c+1-b,c+1-a,c+2-a,c+2-b)
\\
&-\Gamma_{\Phi}(2c+2-2b,2c+2-2a,c,c+2-a-b,c+3-a-b,c+1)
\bigg)
\\
&
\frac{\Gamma_{\Phi}(2c-b+3,2c-a+3)}
{
\substack{
2(b-1)(a-1)\Gamma_{\Phi}(2c+3-a-b,c+2-a,c+2-b,2c+2-2b,2c+2-2a,
c,c+2-a-b,2c+1)
}
}
\end{align*}
\item
\begin{align*}
&F_{\Phi}(2c+1,1,a;2c-a+3,2c+2;1) \\
=&
-\frac{(2c+1)(a-2c-2)
(\Psi(2c+1)-\Psi(c+1)
-\Psi(2c+3-2a)+\Psi(c+2-a))}
{(a-1)}
\end{align*}
Here we define 
$$
\Psi(x)=\frac{d}{dx}\log\Gamma_{\Phi}(z).
$$
\end{enumerate}
\end{proposition}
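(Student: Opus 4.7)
The strategy is to evaluate a suitable $\Phi$-Selberg integral in two independent ways and equate the results. On one hand, the closed-form evaluations $S_\Phi^{\pm}(a,b,c)$ in terms of $\Gamma_\Phi$-products established in the preceding subsections provide one side. On the other hand, by fixing the outer variable and performing the inner integration first via an Euler-type substitution, the inner integral becomes a $\Phi$-hypergeometric function, and the remaining single integral can be evaluated (either directly by Euler's integral representation or via Theorem \ref{main identity for 4F3}) as a $_3F_2$ at $1$. Matching the two resulting expressions and solving for the $_3F_2$ value yields the Dixon-type identity.

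For part (1), write $S_\Phi^+(a,b,c)$ as an iterated integral over $0\le x_2\le x_1\le 1$ and substitute $x_2 = x_1 t$ in the inner integration. The inner integral becomes
$$
x_1^{a+2c}\, B_\Phi(a,2c+1)\, F_\Phi(a,1-b;a+2c+1;x_1),
$$
and then the outer integral in $x_1$ is an Euler integral yielding a $_3F_2$ at $1$. Comparing with the $\Gamma_\Phi$-formula for $S_\Phi^+$ and then applying a $\Phi$-analog of Thomae's $_3F_2$ transformation---which one obtains for free by also performing the Selberg integration in the opposite order (first in $x_1$ rather than first in $x_2$) and equating the two resulting $_3F_2$ expressions---produces the Dixon identity in the stated form.

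For part (2), apply the same scheme to the odd Selberg integral $S_\Phi^-(a,b,c)$. The extra factor $(x_1-x_2)$ shifts the exponent $2c \to 2c+1$ in the inner integrand, which in turn shifts the parameters of the inner hypergeometric function and gives rise to the $_3F_2$ with arguments $2c+1,\, 2c-a+3,\, 2c-b+3$ on the left. For the outer integration one must use part (2) of Theorem \ref{main identity for 4F3} over the open interval $(0,1)$, and Proposition \ref{difference between limit and regularization} supplies precisely the correction term appearing as the second summand on the right-hand side.

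For part (3), take the limit $b\to 1$ in the identity from part (2). The hypergeometric value on the left is regular at $b=1$, while the right-hand side is a priori of the form $\tfrac{0}{0}$: the factor $(b-1)$ sits in the denominator, and the difference of the two $\Gamma_\Phi$-products in the numerator vanishes at $b=1$. Expanding this difference to first order in $b-1$ via the definition $\Psi(x) = \tfrac{d}{dx}\log \Gamma_\Phi(x)$ produces exactly the combination $\Psi(2c+1) - \Psi(c+1) - \Psi(2c+3-2a) + \Psi(c+2-a)$ appearing in the statement. The main obstacle is twofold: first, establishing (or bypassing) the $\Phi$-analog of Thomae's $_3F_2$ transformation needed in part (1); second, rigorously executing the $b\to 1$ limit for part (3) in the formal power series ring where $\Gamma_\Phi$ lives, so that $\Psi$ is well-defined as a derivative of a formal series and the leading-order cancellation of the numerator is coherent.
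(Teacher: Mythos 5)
Your part (1) is in substance the paper's own argument run in the opposite direction: the paper writes the ${}_3F_2$ as its Euler double integral over $[0,1]^2$ and performs the change of variables $t=xy$ to land exactly on $S^{+}_{\Phi}(b,1-a,c-b)$, while your substitution $x_2=x_1t$ is the same change of variables read from the Selberg side; both then invoke the even $\Phi$-Selberg evaluation. One bookkeeping remark: the Thomae-type transformation you plan to establish is not needed. With Selberg parameters $(a_S,b_S,c_S)=(b,1-a,c-b)$ your derived value ${}_3F_2(a_S,1-b_S,2a_S+2c_S;a_S+2c_S+1,2a_S+2c_S+b_S;1)$ is literally ${}_3F_2(b,a,2c;2c-b+1,2c-a+1;1)$, which is the stated form up to permuting upper and lower parameters, so the extra reversed-order integration only reproduces the same identity with $a$ and $b$ exchanged.

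The genuine gap is in part (2). Running your scheme on $S^{-}_{\Phi}$ alone does not produce the stated ${}_3F_2$: the odd integral gives lower parameters $a_S+2c_S+2$ and $2a_S+2c_S+b_S+1$, and no relabeling makes these simultaneously equal to $2c-a+3$ and $2c-b+3$ (one of them always comes out as $2c-a+2$ or $2c-b+2$). The reason is structural: the Euler double integral representing $F_{\Phi}(2c+1,b,a;2c-a+3,2c-b+3;1)$ transforms under $t=xy$ into $Sel_{\Phi}((1-x)y)_{b-1,-a+1,c-b+1}$, a Selberg integral with the extra polynomial factor $(1-x)y$, not into an odd Selberg integral; since $2(1-x)y=1-(1-x)(1-y)-xy+(y-x)$, the shift lemma expresses it as a linear combination of even Selberg integrals at shifted parameters and one odd Selberg integral, and the difference of the two $\Gamma_{\Phi}$-monomials on the right of (2) is precisely the trace of this even/odd decomposition. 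It is not the boundary correction of Theorem \ref{main identity for 4F3}(2) combined with Proposition \ref{difference between limit and regularization}: that correction term is independent of the outer parameters $p_1,p_2,q_1$, whereas the second summand in (2) involves all of $a,b,c$, so the mechanism you propose cannot generate it. Once (2) is obtained by the even/odd decomposition, your part (3) is fine and agrees with the paper: the paper likewise takes the limit $b\to 1$ (directly on $\lim_{b\to1}Sel_{\Phi}((1-x)y)_{b-1,-a+1,c-b+1}$), and the $\Psi$-combination indeed arises from the first-order expansion in $b-1$ of the vanishing difference of $\Gamma_{\Phi}$-products against the $(b-1)$ in the denominator, which is unambiguous in the formal power series setting where $\Gamma_{\Phi}(x)=x^{-1}\exp(\sum_{n\geq 2}a_nx^n)$ lives.
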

\begin{proof}
\begin{align*}
&
\frac{
\Gamma_{\Phi}(2c,-a+1,b,2c-2b+1)}
{\Gamma_{\Phi}(2c-a+1,2c-b+1)}
F_{\Phi}(2c,b,a;2c-a+1,2c-b+1;1) \\
=&
\int_{[0,1]^2}^{\Phi}x^{2c-1}(1-x)^{-a}y^{b-1}(1-y)^{2c-2b}(1-xy)^{-a}dxdy \\
=&
\int^{\Phi}_{0\leq t \leq x\leq 1}x^{b-1}(1-x)^{-a}t^{b-1}
(x-t)^{2c-2b}(1-t)^{-a}dxdt \\
=&S^+_{\Phi}(b,-a+1,c-b)
\end{align*}
Here we change variables by $(x,t)\to (x,y)=(x,t/x)$.
By Selberg integral formula, we have the proposition.

(2)
Using even and odd Selberg integrals, we have
\begin{align*}
&
\frac{\Gamma_{\Phi}(2c+1,-a+2,b,2c-2b+3)}
{\Gamma_{\Phi}(2c-a+3,2c-b+3)}
F_{\Phi}(2c+1,b,a;2c-a+3,2c-b+3;1) \\
=&
\int_{[0,1]^2}^{\Phi}x^{2c}(1-x)^{-a+1}y^{b-1}(1-y)^{2c-2b+2}(1-xy)^{-a}dxdy \\
=&
\int^{\Phi}_{0\leq t \leq x\leq 1}
x^{b-2}(1-x)^{-a+1}t^{b-1}(x-t)^{2c-2b+2}(1-t)^{-a}dxdt
\\
=&
Sel_{\Phi}((1-x)y)_{b-1,-a+1,c-b+1} 
\end{align*}
Therefore we have the proposition

(3)
We take a limit $b\to 1$.
\begin{align*}
&F_{\Phi}(2c+1,1,a;2c-a+3,2c+2;1) \\
=&
\frac{1}{2}
\lim_{b\to 1}
Sel_{\Phi}(1-(1-x)(1-y)-(x-y)-xy)_{b-1,-a+1,c-b+1} \\
=&
-\frac{(2c+1)(a-2c-2)
(\Psi(2c+1)-\Psi(c+1)
-\Psi(2c+3-2a)+\Psi(c+2-a))}
{(a-1)}
\end{align*}
\end{proof}

\section{Brown-Zagier relation for associators}
\subsection{Li's computation for Brown-Zagier relation}
We have the folloing relations between 
$F_{\Phi}(a_1,a_1,a_3;b_1,b_2)=
F_{\Phi}(a_1,a_1,a_3;b_1,b_2;1)$
arising from relations in 
$H^{\Phi,B}_{2}(\Cal M_5,\Cal F(a_1,a_2,a_3;b_1,b_2))$
and $H^2_{\Phi,dR}(\Cal M_5, \Cal F(a_1,a_2,a_3;b_1,b_2))$.
\begin{proposition}
\label{several relations for hypergeometric constatns}
We have the following equalities
\begin{enumerate}
\item
\label{1st relation of hypergeom const}
\begin{align*}
&F_{\Phi}(x,-x,z;1+y,1-y)
\\=
&\frac{1}{2}
F_{\Phi}(x,1-x,z;1+y,1-y)+
\frac{1}{2}
F_{\Phi}(1+x,-x,z;1+y,1-y)
\end{align*}
\item
\label{2nd relation of hypergeom const}
\begin{align}
\label{dividing two terms}
&F_{\Phi}(x,1-x,z;1+y,1-y)
\\=
\nonumber
&\frac{\Gamma_{\Phi}(1+y,1-x+y-z)}{\Gamma_{\Phi}(1-x+y,1+y-z)}
F_{\Phi}(x,x-y,z;x-y+z,1-y) \\
\nonumber
&+
\frac{\Gamma_{\Phi}(1+y,1-y,x-y+z-1,1-z)}
{\Gamma_{\Phi}(x,z,x-y,2-y-z)}\\
\nonumber
&
F_{\Phi}(1-x+y,1+y-z,1-z;2-x+y-z,2-x-z) 
\end{align}
\item
\label{3rd relation of hypergeom const}
\begin{align*}
&F_{\Phi}(x,x-y,z;x-y+z,1-y) \\
=&
\frac{\Gamma_{\Phi}(1-x-y,1-y)}{\Gamma_{\Phi}(1-y-z,1-x-y+z)}
F_{\Phi}(-y+z,z,z;x-y+z,1-x-y+z)
\end{align*}
\end{enumerate}
\end{proposition}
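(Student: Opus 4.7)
The plan is to derive each of the three identities as a consequence of a relation in the twisted Betti homology or the de Rham cohomology of $\mathcal{M}_5$ with coefficients in $\mathcal{F}(a_1,a_2,a_3;b_1,b_2)$, translated via the $\Phi$-integral pairing of Section 4. Each occurrence of $F_\Phi(a_1,a_2,a_3;b_1,b_2;1)$ on the two sides of (1)--(3) is a pairing of a regularized two-dimensional twisted cycle with the distinguished two-form $\frac{d\xi_1}{\xi_1}\wedge\frac{d\xi_2}{\xi_2}$, normalized by two Beta factors as in the definition of $HM(a_1,a_2,a_3;b_1,b_2)$. The three $F_\Phi$ values on each side live in different (but closely related) cohomology modules, so the task is to exhibit a pairing identity that, once parameter shifts are absorbed into the Beta normalizations, reduces to the stated formula.

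For (1), I would begin from the elementary Pochhammer identity
\[
(x)_n(-x)_n \;=\; \tfrac{1}{2}(x)_n(1-x)_n + \tfrac{1}{2}(1+x)_n(-x)_n,
\]
which follows from $\tfrac{n-x}{-x}+\tfrac{n+x}{x}=2$. At the level of integrands this is a polynomial rewriting once the shifts of $\xi_1,\xi_2$-exponents are matched against the companion shifts in the Beta normalizations. Because it holds termwise in the expansion, the identity lifts without loss to the $\Phi$-version: every coefficient in the associator expansion of the left $F_\Phi$ is the prescribed average of the corresponding coefficients on the right.

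For (2) and (3), which are Thomae-type three-term and Kummer-type transformations of $_3F_2(1)$, the plan is to mimic the derivation of (\ref{cycles on M_4}) on $\mathcal{M}_5$. One identifies the twisted cycles realizing each $F_\Phi$ on the two sides, deforms contours across the divisors of $\mathcal{F}$, and reads off the resulting linear relations whose sine coefficients $\sin(\pi z)/\pi$ convert to Gamma-function quotients through $\sin(\pi z)\,\Gamma_\Phi(z)\Gamma_\Phi(1-z) = \pi$. A cleaner implementation, in the spirit of the proof of Theorem \ref{main identity for 4F3}, is to apply Hochschild--Serre to the projection $\mathcal{M}_5\to\mathcal{M}_4$ that forgets one interior variable: the inner fibre integral is a Gauss $_2F_1$ whose classical Kummer-type transformation lifts to $\Phi$-integrals because Theorem \ref{one cocycle relation on phi beta} already supplies all the required $\Phi$-Gamma arithmetic, and the outer integral then yields the appropriate Gamma prefactor.

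The main obstacle, I expect, will be relation (2), whose right-hand side is a sum of two $F_\Phi$'s with genuinely different Gamma-function weights. Identifying precisely which twisted-cycle decomposition on $\mathcal{M}_5$ produces these specific coefficients (rather than some Thomae variant) demands a delicate matching of the regularization conventions of Proposition \ref{limit tends to delta function} with the Beta-function normalizations built into $HM$. Once (2) is established, (3) can be obtained either as a confluent/specialization limit of (2) or by the same contour-deformation procedure applied to a simpler cycle decomposition.
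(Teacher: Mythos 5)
Your overall strategy for (2) and (3) --- realizing each $F_{\Phi}(\cdots;1)$ as a Beta-normalized $\Phi$-integral of $\frac{d\xi_1}{\xi_1}\wedge\frac{d\xi_2}{\xi_2}$ over a regularized $2$-cycle and extracting the identities from a twisted-cycle decomposition together with changes of coordinates --- is the route the paper takes ((2) from a relation in Betti cohomology plus a coordinate change, (3) from a coordinate change in the integral expression). But the proposal has a genuine gap, and it is concentrated exactly where you place the least weight. Your argument for (1) is invalid as stated: you verify the classical Pochhammer identity $(x)_n(-x)_n=\tfrac12 (x)_n(1-x)_n+\tfrac12(1+x)_n(-x)_n$ and then assert that the identity ``lifts without loss to the $\Phi$-version because it holds termwise.'' For a general associator, $F_{\Phi}(\cdots;1)$ is not given by any Pochhammer series; it is the pairing of a fixed regularized Betti chain with a de Rham class, and an identity valid for the classical (KZ) specialization does not automatically hold for the coefficients of an arbitrary $\Phi$ --- if it did, Dixon's theorem and the Thomae relations could simply be quoted and most of the paper's machinery would be superfluous. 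Only relations of geometric origin transfer. Identity (1) is of that kind, and that is how the paper proves it: the three suitably normalized integrands represent classes in the same $H^2_{\Phi,dR}(\mathcal M_5,\mathcal F(x,-x,z;1+y,1-y))$, and the relevant linear combination is exact. Note that this is \emph{not} a pointwise rewriting of integrands (the ratios of the integrands are $\xi_2/(1-\xi_2)$ and $\xi_1/(1-\xi_1)$ respectively), so the exactness/contiguity computation in twisted de Rham cohomology, paired against the one fixed cycle, is the actual content of (1); your proposal does not supply it.

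For (2), the only three-term relation, you explicitly leave open which cycle decomposition produces the two specific $\Gamma_{\Phi}$-weights; that is the heart of the statement rather than a detail, and the matching against the regularization of Proposition \ref{limit tends to delta function} and the Beta normalizations is precisely what must be exhibited. Your alternative route via Hochschild--Serre repeats the same unjustified lifting step (``the classical Kummer-type transformation of the inner ${}_2F_1$ lifts''): in the $\Phi$-setting the ${}_2F_1$ connection formulas have to be drawn from the cycle relations (\ref{cycles on M_4}) together with Propositions \ref{Phi HGF and associator} and \ref{dual initial data at 1}, not from classical analysis. Two smaller points: the reflection identity $\sin(\pi z)\Gamma_{\Phi}(z)\Gamma_{\Phi}(1-z)=\pi$ you invoke presupposes both the shift relation $\Gamma_{\Phi}(z+1)=z\Gamma_{\Phi}(z)$ and the fact that the even coefficients $a_{2k}=\zeta_{\Phi}(2k)$ take their classical values; this is true for associators but needs to be said. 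And for (3), obtaining it as a limit of (2) is roundabout: in the paper it is the elementary two-term relation coming directly from a change of variables in the Euler-type double integral, which is both simpler and what the proof of Proposition \ref{relation to psi functions} actually uses.
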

\begin{proof}
The equality (1) follows from an equality in de Rham cohomology.
The equality (2) follows from an equality for Betti cohomology
and change of coordinates.
The equality (3) follows by changing coordinate of integral expression.
\end{proof}
Since
$$
F_{\Phi}(a_1,a_2,a_3;c_1,c_2)\in 1+a_1\bold C[[a_1, \dots, c_2]],
$$
we have the following proposition.
\begin{lemma}
\label{vanishing differential}
$$
\frac{d}{dz}F_{\Phi}(-y+z,z,z;x-y+z,1-x-y+z)\mid_{z=0}=0
$$
\end{lemma}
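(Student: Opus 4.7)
The plan is to exploit the symmetry of $F_{\Phi}$ in its three upper parameters, together with the divisibility already stated in the paragraph preceding the lemma. First I would establish that $F_{\Phi}(a_1, a_2, a_3; c_1, c_2)$ is symmetric under all permutations of $(a_1, a_2, a_3)$. This is the $\Phi$-analog of the classical symmetry of ${}_3F_2$ in its numerator parameters; in the present setting it should follow from a coordinate exchange of the integration variables in the integral expression realizing $F_{\Phi}$, inducing an isomorphism of the corresponding $\Phi$-hypergeometric modules. The identity (\ref{correspondence inducing symmetry}) used earlier in the paper is an instance of exactly this kind of coordinate reshuffling.

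Next I would combine this symmetry with the already-stated divisibility
$$
F_{\Phi}(a_1, a_2, a_3; c_1, c_2) - 1 \in a_1\, \bold C[[a_1,\dots,c_2]]
$$
to conclude
$$
F_{\Phi}(a_1, a_2, a_3; c_1, c_2) - 1 \in (a_1) \cap (a_2) \cap (a_3) = (a_1 a_2 a_3),
$$
where the last equality uses that $\bold C[[a_1,\dots,c_2]]$ is a UFD in which $a_1, a_2, a_3$ are pairwise coprime primes.

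Then I would substitute $a_1 = -y+z$, $a_2 = z$, $a_3 = z$, $c_1 = x-y+z$, $c_2 = 1-x-y+z$. The image of the ideal $(a_1 a_2 a_3)$ sits inside $z^2(-y+z)\bold C[[x,y,z]] \subset z^2\, \bold C[[x,y,z]]$, so
$$
F_{\Phi}(-y+z, z, z; x-y+z, 1-x-y+z) - 1 \in z^2\, \bold C[[x,y,z]].
$$
The right side has vanishing first derivative in $z$ at $z = 0$, which is the claim.

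The only non-routine step is the first one: verifying symmetry of $F_{\Phi}$ in $(a_1, a_2, a_3)$ purely within the $\Phi$-framework, rather than invoking the classical case by analogy. This is where I expect the main obstacle, but the necessary tools are the same as those used to derive the integral identity (\ref{key integral formula}) — a rational change of coordinates on $\Cal M_n$ transporting one integral presentation to another, together with functoriality of $\Phi$-integration — so no fundamentally new machinery should be required.
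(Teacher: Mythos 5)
Your overall strategy is the right one, and it is essentially the one the paper intends: the remark preceding the lemma (divisibility of $F_{\Phi}-1$ by an upper parameter) is the whole proof there, and your observation that one actually needs divisibility in the slots that get specialized to $z$, so that $F_{\Phi}(-y+z,z,z;x-y+z,1-x-y+z)-1$ lies in $z^2\bold C[[x,y,z]]$, is more care than the paper itself takes; your UFD step $(a_1)\cap(a_2)\cap(a_3)=(a_1a_2a_3)$ and the substitution argument are fine. The genuine gap is your Step 1. The full $S_3$-symmetry of $F_{\Phi}$ in $(a_1,a_2,a_3)$ with $(c_1,c_2)$ fixed is asserted, not proved, and the mechanism you propose --- an exchange of integration variables --- does not deliver it in the $\Phi$-framework. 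In the integral presentation the paper uses, $\int^{\Phi}t_1^{a_1}(1-t_1)^{c_1-a_1}t_2^{a_2}(1-t_2)^{c_2-a_2}(1-xt_1t_2)^{-a_3}\frac{dt_1}{t_1}\frac{dt_2}{t_2}$ normalized by $B_{\Phi}(a_1,c_1-a_1+1)B_{\Phi}(a_2,c_2-a_2+1)$, the swap $t_1\leftrightarrow t_2$ only gives the pair symmetry $F_{\Phi}(a_1,a_2,a_3;c_1,c_2)=F_{\Phi}(a_2,a_1,a_3;c_2,c_1)$, and the only transformation in the paper that moves $a_3$ into an ``exponent of a coordinate'' position is the substitution $xt_1t_2t_3=1$ of the odd-Selberg subsection, which changes the lower parameters and introduces factors $x^{-c_i}$ (it relates $f_{11}$ to $f_{12},f_{13}$), so it is not a symmetry statement either. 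Classically the $S_3$-symmetry is read off the series $\sum_n\frac{(a_1)_n(a_2)_n(a_3)_n}{(c_1)_n(c_2)_n\,n!}x^n$, and there is no such series definition of $F_{\Phi}$ to appeal to; so as written this step is a missing idea, and it is also stronger than what the lemma requires.

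The repair is cheaper than full symmetry: you only need $F_{\Phi}-1\in(a_2)\cap(a_3)$. Divisibility by $a_2$ does follow from a variable exchange, namely the pair swap above combined with the divisibility by the first slot already stated in the paper: $F_{\Phi}(a_1,0,a_3;c_1,c_2)=F_{\Phi}(0,a_1,a_3;c_2,c_1)=1$. Divisibility by $a_3$ should instead be proved by degeneration of the kernel: at $a_3=0$ the factor $(1-xt_1t_2)^{-a_3}$ disappears, the double $\Phi$-integral factors through the Fubini/Hochschild--Serre machinery into a product of two beta-type $\Phi$-integrals, and these are exactly cancelled by the normalizing factors $B_{\Phi}(a_1,c_1-a_1+1)B_{\Phi}(a_2,c_2-a_2+1)$, giving $F_{\Phi}(a_1,a_2,0;c_1,c_2)=1$ identically. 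With these two divisibilities, $(a_2)\cap(a_3)=(a_2a_3)$ specializes under $a_2=a_3=z$ to $z^2\bold C[[x,y,z]]$ and your final step gives Lemma \ref{vanishing differential}; the main obstacle you flagged can thus be bypassed entirely.
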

Now we are ready to compute the function in Theorem
\ref{first identity by Zagier} using Li's computation.
\begin{proposition}
\label{relation to psi functions}
We have
\begin{align*}
&\frac{d}{dz}F_{\Phi}(x,1-x,z;1+y,1-y) \\
=&
\Psi(1+y)+\Psi(1-y)-\Psi(1-x+y)-\Psi(1-x-y) \\
&-\frac{\bold s(x)}{\bold s(y)}
(\Psi(1-x+y)-\Psi(1-x-y)-\Psi(1-\frac{x-y}{2})+\Psi(1-\frac{x+y}{2}))
\end{align*}
\end{proposition}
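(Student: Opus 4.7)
The plan is to combine identities (\ref{2nd relation of hypergeom const}) and (\ref{3rd relation of hypergeom const}) of Proposition~\ref{several relations for hypergeometric constatns} into the single decomposition
\[
F_{\Phi}(x,1-x,z;1+y,1-y)=A(z)C(z)F_3(z)+B(z)F_2(z),
\]
where $A(z),C(z)$ are the $\Gamma_{\Phi}$-prefactors coming from (\ref{2nd relation of hypergeom const}) and (\ref{3rd relation of hypergeom const}), $B(z)$ is the second prefactor of (\ref{dividing two terms}), $F_3(z)=F_{\Phi}(-y+z,z,z;x-y+z,1-x-y+z)$, and $F_2(z)=F_{\Phi}(1-x+y,1+y-z,1-z;2-x+y-z,2-x-z)$. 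I then differentiate at $z=0$ and exploit two vanishings: $F_3'(0)=0$ by Lemma~\ref{vanishing differential}, and $B(0)=0$ because $1/\Gamma_{\Phi}(z)=z+O(z^3)$ has a simple zero at the origin. Combined with $F_3(0)=1$ (two upper parameters of $F_3(0)$ are zero), this collapses the derivative to
\[
\tfrac{d}{dz}\big|_{z=0}F_{\Phi}(x,1-x,z;1+y,1-y)=(AC)'(0)+B'(0)F_2(0).
\]

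Next, $(AC)'(0)$ is computed by logarithmic differentiation, using $A(0)C(0)=1$ and $\Psi=(\log\Gamma_{\Phi})'$; the result is $\Psi(1+y)+\Psi(1-y)-\Psi(1-x+y)-\Psi(1-x-y)$, which is exactly the first line of the claimed formula.

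For the second line I evaluate $F_2(0)=F_{\Phi}(1-x+y,1+y,1;2-x+y,2-x)$ by permuting numerator and denominator parameters so that the $1$ sits in the middle slot; the result matches the hypothesis of Dixon's variant (part~(3) of the $\Phi$-Dixon proposition in Section~7) specialised to $c=(y-x)/2$ and $a=1+y$, and yields $F_2(0)=\tfrac{(1-x+y)(1-x)}{y}\,\Theta$, where $\Theta=\Psi(1-x+y)-\Psi(1-\tfrac{x-y}{2})-\Psi(1-x-y)+\Psi(1-\tfrac{x+y}{2})$. Independently, $B'(0)=\lim_{z\to 0}B(z)/z$ is an explicit product of $\Gamma_{\Phi}$-values which, after invoking the $\Phi$-analogs of the Gamma reflection $\bold s(u)\Gamma_{\Phi}(u)\Gamma_{\Phi}(1-u)=1$ and the recursion $\Gamma_{\Phi}(1+u)=u\Gamma_{\Phi}(u)$, simplifies to $-\bold s(x)\,y/\bigl[\bold s(y)(1-x+y)(1-x)\bigr]$. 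Multiplying $B'(0)$ by $F_2(0)$ cancels the rational factor and leaves $-\tfrac{\bold s(x)}{\bold s(y)}\Theta$, the second line.

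The main obstacle is this last cancellation: matching $B'(0)$ against the rational prefactor produced by Dixon's variant so that only the ratio $\bold s(x)/\bold s(y)$ survives requires tight bookkeeping of reflection and recursion identities for $\Gamma_{\Phi}$, together with a careful treatment of the residual $\Gamma_{\Phi}(1)$ factor that appears in the Laurent expansion of $1/\Gamma_{\Phi}(z)$.
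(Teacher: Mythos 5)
Your proposal follows essentially the same route as the paper's own proof: differentiate the decomposition (\ref{dividing two terms}), use part (\ref{3rd relation of hypergeom const}) together with Lemma \ref{vanishing differential} so that only the logarithmic derivative of the $\Gamma_{\Phi}$-prefactor survives in the first term, and use the simple zero of $1/\Gamma_{\Phi}(z)$ plus the Dixon variant (3) with $a=1+y$, $2c=y-x$ to evaluate the second term. The only difference is organisational (your explicit product-rule bookkeeping with $F_3(0)=1$, $B(0)=0$), and your sign tracking is in fact more consistent with the stated proposition than the paper's own final display, which contains sign typos.
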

\begin{proof}
We compute the each term of the derivative of (\ref{dividing two terms}).
Using Proposition 
\ref{several relations for hypergeometric constatns} 
(\ref{3rd relation of hypergeom const}) and
Lemma \ref{vanishing differential}, we have
\begin{align*}
&
\frac{d}{dz}\bigg(\frac{\Gamma_{\Phi}(1+y,1-x+y-z)}{\Gamma_{\Phi}(1-x+y,1+y-z)}
F_{\Phi}(x,x-y,z;x-y+z,1-y)\bigg)_{z=0} \\
=&
\frac{d}{dz}\bigg(\frac{\Gamma_{\Phi}(1+y,1-y,1-x-y,1-x+y-z)}
{\Gamma_{\Phi}(1-x+y,1+y-z,1-y-z,1-x-y+z)} \\
&F_{\Phi}(-y+z,z,z;x-y+z,1-x-y+z)\bigg)_{z=0} \\
=&
\Psi(1+y)+\Psi(1-y)-\Psi(1-x+y)-\Psi(1-x-y)
\end{align*}

We compute the derivative of the second term of (\ref{dividing two terms}).
Since $\lim_{z\to 0}\Gamma_{\Phi}(z)z=1$, we have
\begin{align*}
&\frac{d}{dz}\bigg(\frac{\Gamma_{\Phi}(1+y,1-y,x-y+z-1,1-z)}
{\Gamma_{\Phi}(x,z,x-y,2-y-z)}\\
\nonumber
&
F_{\Phi}(1-x+y,1+y-z,1-z;2-x+y-z,2-x-z) \bigg)_{z=0}
\\
=
&\frac{\Gamma_{\Phi}(1+y,1-y,x-y-1)}
{\Gamma_{\Phi}(x,x-y,2-x)}
F_{\Phi}(1-x+y,1+y,1;2-x+y,2-x) \\
=
&\frac{y\bold s(x)}
{(x-1)(x-y-1)\bold s(y)}
F_{\Phi}(1-x+y,1+y,1;2-x+y,2-x) 
\end{align*}
By setting $a=y+1,2c=y-x$ in the equality of
Proposition 
\ref{several relations for hypergeometric constatns} 
(\ref{2nd relation of hypergeom const}),
it is equal to
\begin{align*}
&\frac{\bold s(x)}
{\bold s(y)}
(\Psi(y-x+1)-\Psi(1+\frac{y-x}{2})-\Psi(1-x-y)-\Psi(1-\frac{x+y}{2}))
\end{align*}
Thus we have the proposition.
\end{proof}
\begin{proof}[Proof of Theorem \ref{main theorem}]

By Theorem \ref{first identity by Zagier},
and Proposition 
\ref{several relations for hypergeometric constatns} 
(\ref{1st relation of hypergeom const}) and Proposition 
\ref{relation to psi functions}, we have
\begin{align*}
&\sum_{n\geq 0,m>0}c_{\Phi,(01)^n0(01)^m}y^{2n+1}x^{2m} \\
=&\bold s(y)
\frac{d}{dz}\mid_{z=0}F_{\Phi}(x,-x,z;1+y,1-y;1) \\
=&
\frac{\bold s(y)}{2}
\frac{d}{dz}\mid_{z=0}
\bigg(
F_{\Phi}(x,1-x,z;1+y,1-y)+
F_{\Phi}(1+x,-x,z;1+y,1-y)
\bigg) \\
=&
\frac{\bold s(y)}{2} 
\bigg(
2\Psi(1+y)+2\Psi(1-y)-\Psi(1+x+y) \\
&-\Psi(1-x-y)-\Psi(1+x-y)-\Psi(1-x+y))
\bigg)
\\
&-\frac{\bold s(x)}{2}
\bigg(
\Psi(1+\frac{x+y}{2})
+\Psi(1-\frac{x+y}{2})
-\Psi(1+\frac{x-y}{2})
-\Psi(1+\frac{y-x}{2})
 \\
&-\Psi(1+x+y)-\Psi(1-x-y)+\Psi(1+x-y)+\Psi(1-y-x))
\bigg)
\end{align*}
Using the equality
$$
\Psi(1+z)=\frac{d}{dx}\log\Gamma_{\Phi}(z+1)=\sum_{n=2}^{\infty}(-1)^n\zeta_{\Phi}(n)x^{n-1}
$$
we have the theorem.
\end{proof}

\end{document}